
\documentclass[leqno]{compositio}


%

\usepackage{amssymb,mathrsfs}
\usepackage{amsmath,mathscinet}
\usepackage[normalem]{ulem}
\usepackage{amssymb,amstext,titletoc,fancyhdr,color,xcolor,calc,graphicx}

\theoremstyle{plain}
\newtheorem{theorem}{Theorem}[section]
\newtheorem{lemma}[theorem]{Lemma}
\newtheorem{proposition}[theorem]{Proposition}

\newtheorem{corollary}[theorem]{Corollary}

\theoremstyle{definition}
\newtheorem{definition}[theorem]{Definition}
\newtheorem{example}[theorem]{Example}

\theoremstyle{remark}
\newtheorem{remark}[theorem]{Remark}

\numberwithin{equation}{section}


\def\N{{\mathbb N}}

\def\C{{\mathbb C}}

\def\Z{{\mathbb Z}}

\newcommand{\beqnn}{\begin{equation}}
\newcommand{\eeqnn}{\end{equation}}
\newcommand{\eb}{\begin{enumerate}}
\newcommand{\ee}{\end{enumerate}}
\newcommand{\bbm}{\begin{bmatrix}}
\newcommand{\ebm}{\end{bmatrix}}
\newcommand{\bpm}{\begin{pmatrix}}
\newcommand{\epm}{\end{pmatrix}}

\newcommand{\bi}{\begin{itemize}}
\newcommand{\ei}{\end{itemize}}
\newcommand{\beq}{\begin{eqnarray*}}
\newcommand{\eeq}{\end{eqnarray*}}


\newcommand{\beqq}{\begin{eqnarray}}
\newcommand{\eeqq}{\end{eqnarray}}
\newcommand{\beqn}{\begin{eqnarray}}
\newcommand{\eeqn}{\end{eqnarray}}

\DeclareMathOperator{\Tr}{Tr}

\begin{document}

\title[Resolving singularities \& monodromy reduction of  Fuchsian connections]{Resolving singularities and monodromy reduction of  Fuchsian connections${}^\ast$}

\author{Yik-Man Chiang}
\email{machiang@ust.hk}
\address{Department of Mathematics, The Hong Kong University of Science and Technology,
Clear Water Bay, Kowloon, Hong Kong SAR}
\author{Avery Ching}
\email{maaching@ust.hk}
\address{Department of Mathematics, The Hong Kong University of Science and Technology, Clear Water Bay, Kowloon, Hong Kong SAR}
\author{Chiu-Yin Tsang}
\email{h0347529@connect.hku.hk}
\address{Department of Mathematics, The University of Hong Kong, Pokfulam Road, Hong Kong SAR}

\dedication{Dedicated to the memory of Richard A.  Askey}
\classification{34M35, 14F05 (primary), 33E10, 33E17 (secondary).}
\keywords{Heun Equation, Monodromy, Bundle modification, Hypergeometric Equation, Invariant subspaces, Painlev\'e VI}


\begin{abstract} {We study monodromy reduction of Fuchsian connections from a sheave theoretic viewpoint, focusing on the case when a singularity of a special connection with four singularities has been resolved. The main tool of study is {based on} a bundle modification technique due to Drinfeld and Oblezin. This approach via invariant spaces and eigenvalue problems allows us not only to explain Erd\'elyi's classical infinite hypergeometric expansions of solutions to Heun equations, but also to obtain new expansions not found in his papers.  As a consequence,  a geometric proof of Takemura's eigenvalues inclusion theorem is obtained.  Finally, we observe a precise matching between the monodromy reduction criteria giving those special solutions of Heun equations and that giving classical solutions of the Painlev\'e VI equation.}
\end{abstract}

\maketitle

 \vspace*{6pt}\tableofcontents

\section{Introduction}
It has  been observed recently that the use of closed form solutions to Fuchsian equations with an apparent singularity led to  significant simplification in solving certain \textit{free moving boundary problems} \cite{Craster1996,Craster1997,CrasterHoang1998, SC}. In fact, resolving apparent singularities of Fuchsian equations is also  \cite{STW} related to automorphic forms and geometric properties of special functions   \cite{BM2015,CKLZ, Kimura, Maier} (see also \cite{Hautot3}). Indeed,  Erd\'elyi's study \cite{Erdelyi1, Erdelyi2} of  global monodromy groups of Heun equations by finding ``global hypergeometric expansion solutions"  was also along this ``removing apparent singularities paradigm".  However, in order to overcome the ambiguity of the interplay between the local and global aspects of solutions of Fuchsian equations typically 
using classical language, it is the purpose of this article to apply sheaf theoretic language to study geometric aspects of
 Fuchsian connections where one of their singularities becomes apparent (i.e., being resolved).  As consequences of our study, we recover Takemura's \textit{eigenvalue inclusion theorem} \cite{Take4}  with a much simplified geometric proof,  and moreover,  new hypergeometric expansions of solutions to Heun equations other than those already obtained by Erd\'elyi  are derived.

We show that ``resolving singularities'' for a given Fuchsian equation with four singularities, i.e., Heun equation, carries a deeper meaning than what the formulae could indicate. This is best described as a morphism between bundles equipped with appropriately defined connections that have one less singularity, i.e., a singularity of the original connection is being resolved by a singular gauge transformation. This phenomenon of removing singularity
is inline with the spirit of resolving singularities of algebraic curves. In particular, our sheaf theoretic approach does not only allow us to give an explanation of the origin of the various local hypergeometric function expansions of the Heun equation given by Erd\'elyi  \cite{Erdelyi1, Erdelyi2}, but it also allows us to understand the mechanism of Erd\'elyi's expansion thus
enabling us to derive a new expansion, thus giving a unified  theory of the classical works by Erd\'elyi in the geometric language of Heun connections. For instance, in \cite[p. 51]{Erdelyi1} (and a similar one in \cite[p. 63]{Erdelyi2}), Erd\'elyi proposed a solution {scheme} of a certain Heun equation in the form
	\begin{equation}\label{E:sum_1942}
		y=\sum_n c_n\, P
			\left(
			\begin{matrix}
			0                     & 1        & \infty\\
			n                     & 0        & \alpha \\
			\delta-\alpha-\beta-n & 1-\delta & \beta
			\end{matrix}
			;\ x
			\right)
	\end{equation}
in which the Riemann scheme $P$ denotes  functions satisfying a suitable hypergeometric equation. In other words, for each small open set $U$, denoted by $\mathcal{A}(U)$ and $\mathcal{B}(U)$ the spaces of solutions of that hypergeometric and Heun equations respectively. Erd\'elyi's expansion is in fact a collection of maps $\mathcal{A}(U)\to\mathcal{B}(U)$ which are compatible with analytic continuation to a neighbouring open set (see e.g.,  \cite[\S3.1-3.2]{SC}). In modern language, it is a morphism of sheaves, and in particular it is a morphism of local systems in this context. It is immediate that morphisms of local systems come from morphisms of flat bundles. One useful type of morphisms was exhibited by Oblezin \cite{Oblezin,Oblezin1} in order to 
 show that the classical contiguous relations of hypergeometric functions can be explained by a technique of bundle modification originated from Drinfeld \cite{Drinfeld1}. 
Simply speaking, when one of the singularities  becomes apparent,
we interpret this modification as an inclusion of spaces of certain kind of sections into itself with a faster growth rate.

 We have {achieved this by} singling out {the monodromy reduction (\textbf{WAS}$(m)$) (as well as two other conditions (\textbf{WGRM}) and (\textbf{LR}$(n)$). See the description below.)} of the Heun connections and showed that their different combinations with the language of morphisms of suitably defined flat bundles yields a geometric interpretation of Erdelyi's hypergeometric expansions, including the new ones not found in \cite{Erdelyi1, Erdelyi2}. 
 These morphisms relate the corresponding eigensections and eigenvalues of certain connections with different signatures enumerated by $m\in\mathbb{N}$ (Theorem \ref{T:geo_takemura}). An immediate observation from this study is a geometric formulation and proof of Takemura's remarkable \textit{eigenvalues inclusion theorem} \cite[Theorem 5.3]{Take4}  that we have already mentioned in the first paragraph.  Moreover, such a language of flat bundles identifies the \textit{new} common conditions of resolving singularity and the degeneration of the monodromy group of the Heun equation to the reduction of the sixth Painlev\'e equation $\mathbf{P_{VI}}$.

We start by considering the \textit{Heun type-connection}
	\begin{equation}\label{E:Heun_connection}
		\nabla=d-\Big[\dfrac{A_0}{x}+\dfrac{A_1}{x-1}+\dfrac{A_a}{x-a}\Big]\, dx,
	\end{equation}
where each residue matrix $A_k\ (k=0,\, 1,\, a,\, \infty)$ is diagonalisable. Its eigensections can be written as a finite sum of the form
	\begin{equation}\label{E:formal}
		\sum_{n}c_n \, (\nabla_v)^n\, A_a\,s,
	\end{equation}
and the exact forms of these sums are determined by the eigenvalue differences of the residue matrices $A_k\ (k=0,\, 1,\, a,\, \infty)$ and the eigenvalues of the $\nabla_v$. Without loss of generality,  we shall normalise the matrices so that the eigenvalues of the $A_0,\, A_1,\, A_a$ are $\{0,\, 1-\gamma\}, \{0,\, 1-\delta\}, \{0,\, 1-\epsilon\}$ respectively.
The above theory to be developed in this article  gives an explanation of Erd\'elyi's local solutions \cite[1942]{Erdelyi1}, \cite[1944]{Erdelyi2} to the Heun equation 
\begin{equation}\label{E:heun}
        D\, y:= x(x-1)(x-a) \frac{d^2y}{dx^2}+
         x(x-1)(x-a)\Big(\frac{\gamma}{x}+\frac{\delta}{x-1}+\frac{\epsilon}{x-a}\Big)
         \frac{dy}{dx}+
         \alpha\beta xy=qy,
\end{equation}
 where the parameters satisfy the Fuchsian constraint $\alpha+\beta-\gamma-\delta-\epsilon+1=0$. For example, Erd\'elyi found an infinite sum of hypergeometric functions
 	\begin{equation}\label{E:erdelyi}
		\sum_{n=0}^\infty \mathcal{C}_n(q)\,
x^n
\sideset{_2}{_1}{\operatorname{F}}\left({\begin{matrix}
                 \alpha+n,\ \beta+n\\
                  \alpha+\beta-\delta+2n+1
                 \end{matrix}};x\right),
		\end{equation}
which represents a local solution expanded about the singularity $x=0$ with the exponent $0$,  where the coefficients $\mathcal{C}_n$ satisfy a certain three-term recursion \cite[pp. 54--55]{Erdelyi1} (also see Appendix~\ref{A:erdelyi}).

We note that the summations \eqref{E:sum_1942} and \eqref{E:erdelyi} can be regarded as classical forms of \eqref{E:formal}.
It is known that when the \textit{accessory parameter} $q$ in  \eqref{E:heun} is \textit{suitably} chosen, the  infinite sum \eqref{E:erdelyi} converges in a certain domain $\Omega_1^-$ 
containing two singularities and a third singularity situates on its circumference (see Appendix~\ref{A:erdelyi} for more details). Erd\'elyi hinted that such hypergeometic expansions carry more monodromy data than the usual power series expansions do and that would pave a way to study the monodromy group of the Heun equation. Indeed, when the accessory $q$ in \eqref{E:heun} is so chosen, then the original expansion \eqref{E:erdelyi} which represents an analytic solution around the origin is now being analytic continued to a larger region containing the points $x=1$ and $x=\infty$. This is equivalent to the  \textit{simultaneous diagonalisation} of the monodromies of \eqref{E:heun} at $x=1$ and $x=\infty$, or more generally at
any two out of the four singularities. We call this kind of monodromy reduction \textit{weak global condition of reducibility of monodromy} (\textbf{WGRM}).

When the monodromy of the Heun-type connection \eqref{E:Heun_connection}
meets the  \textit{weak apparent singularity condition} (\textbf{WAS}$(m)$), namely when the matrix $A_a$ possesses $0$ and a non-zero integer as eigenvalues, then its eigen-sections possess terminated expansions \eqref{E:formal}.
This follows from Theorem \ref{T:resolving_sing} below that if the connection \eqref{E:Heun_connection} meets the (\textbf{WAS}$(m)$), then there is a sheaf of locally free $\mathcal{O}_{\mathbb{P}^1\backslash\{0,1,\infty\}}-$module $\mathcal{E}$, such that
	\[
			\nabla: \mathcal{E}\to \mathcal{E}\otimes \Omega_{\mathbb{P}^1\backslash\{0,1,\infty\}}
	\]
	has the prescribed connection matrix as well as the matrix
	\[
		-\Big[\dfrac{A_0+A_{a}}{x}+\dfrac{A_1}{x-1}\Big]\, dx
	\]
relative to two different frames.
In particular, the fourth singularity $x=a$ is \textit{removed}. Its eigen-sections are
 terminated hypergeometric expansions \eqref{E:erdelyi} given by Erd\'elyi. 
A similar argument gives another expansion
	\begin{equation}\label{E:erdelyi_2}
		\sum_{n=0}^\infty \mathcal{C}_n\,
\sideset{_2}{_1}{\operatorname{F}}\left({\begin{matrix}
                 \lambda+n,\ \mu-n\\
                  \gamma
                 \end{matrix}};x\right),
		\end{equation}
where $\lambda+\mu=\gamma+\delta-1$ and the coefficients $\mathcal{C}_n$ satisfy a certain three-term recursion \cite[p. 64]{Erdelyi2}. This can be derived from the variation that the matrix of $\nabla$ relative to another frame is
	\[
		-\Big[\dfrac{A_0}{x}+\dfrac{A_1}{x-1}\Big]\, dx.
	\]
We derive a number of infinite expansions not found in the works of Erd\'elyi such as 
	\[
			\sum_{n=0}^\infty \mathcal{C}_n\,
			(x-1)^{n}
\sideset{_2}{_1}{\operatorname{F}}\left({\begin{matrix}
                 \alpha+ n,\ \beta+ n\\
                  \gamma
                 \end{matrix}};\, x\right)
	\]
by having
	\[
		-\Big[\dfrac{A_0}{x}+\dfrac{A_1+A_{a}}{x-1}\Big]\, dx
	\]
as the matrix of $\nabla$ relative to yet another frame.

Notice that the Heun equation \eqref{E:heun} can be derived from the Heun-type connection $\nabla$ in \eqref{E:Heun_connection}, as a member of an isomonodromic family of connections where the necessary appearance of the fifth \textit{apparent singularity} is located at $\infty$.

We list here the three kinds of \textit{degenerations} mentioned:
		\begin{itemize}
		\item \textit{weak global condition of reducibility of monodromy}  (\textbf{WGRM};  \textit{Definition} \ref{D:WGRM}).
		\item \textit{weak apparent singularity condition} (\textbf{WAS}$(m)$;  \textit{Definition} \ref{D:WAS}).
		\item \textit{local condition for reducibility of monodromy\ } (\textbf{LR}$(m))$; \textit{Definition} \ref{D:LR}).
	\end{itemize}

One notices that when $\epsilon=-m$ (where $m\in \mathbb{N}$) (i.e., this corresponds to the degenerate condition ($\mathbf{WAS}(m)$), the
set of multi-valued functions with the same monodromy as a certain hypgerometric equation,
and  with a pole of order  at most $m+1$  at $a$ is invariant under the operator $D$ defined in \eqref{E:heun}. One of the main purposes of this article is to investigate the \textit{eigen-value problem}
	\begin{equation}\label{E:eigenvalue_prob_1}
		D\, y=q\, y
	\end{equation} by combining the study of monodromy and growth rate in a  sheave theoretic methodology, the eigenvalue problem above becomes the {eigenvalue} problem
	\begin{equation}\label{E:eigenvalue_prob_2}
		\nabla_v\, s=\lambda\, s.
	\end{equation}
Here $s$ is a section of a bundle constructed by tensoring a local system with a divisor line bundle, which yields the description of a function space of those with a specific monodromy and growth rate at singularities.

	In Theorem \ref{T:resolving_sing}, we construct the flat bundle $(\mathcal{E}, \nabla)$ in which the section $s$ lives. Then in  Theorem \ref{T:eigenvalues_a} we prove that the $m+1$ eigenvalues $\lambda$ above are precisely the numbers (or are equally spaced)  $0$, $1$, $\cdots$, $m$.  The idea of the proof rests on the construction of a sequence of injective bundle morphisms
	\[
		(\mathcal{E}_m, \nabla_m)\ \longrightarrow\ (\mathcal{E}_{m-1}, \nabla_{m-1}) \   \longrightarrow \cdots  \longrightarrow\ (\mathcal{E}, \nabla)
	\]
	by a technique of bundle modification due to Drinfeld \cite{Drinfeld1} and Oblezin \cite{Oblezin,Oblezin1}\footnote{Oblezin attributed  his bundle construction has its origin from Drinfeld \cite{Drinfeld1}.}. This sequence of bundle morphisms mimics the triangulation of a matrix. Moreover, each of these eigen-sections  
can be written as a formal sum
	\[
		\sum_n c_n^k\, (\nabla_n)^n A_as,\qquad k=0, 1,\cdots, m
	\]
	for some local section $s$ of a sheaf of horizontal sections of a hypergeometric type connection. This gives a geometric interpretation of the $m+1$ eigen-solutions of \eqref{E:heun} studied since \cite{Erdelyi1,Erdelyi2}. 
	
Moreover, the expansions of Erd\'elyi \eqref{E:sum_1942} suggest that some degenerated Heun equations are related to hypergeometric equations. 
Kimura \cite{Kimura1} (1970) 
makes this relation explicit by means of gauge transformations. In fact these explicitly constructed (singular) gauge transformations are morphisms between flat bundles coming from bundle modifications. These morphisms are isomorphisms away from a singular point, which suit the purpose of keeping the properties of a connection unchanged away from the singular point. We will see that it is the geometric context behind the (singular) gauge transformations studied by Kimura \cite{Kimura1}.	
Then we have discovered the startling result that the totality of these conditions are
\textit{identical} to those that lead to the degeneration of the Painlev\'e VI found by Okamoto \cite {Okamoto}.
In \cite{CCT2} 
the authors showed the conditions for which the degeneration of the Darboux (i.e., a periodic Heun) connection 
matches the conditions for the degeneration of Painlev\'e VI.

The article is organized as follows. We begin by recalling some useful results for flat vector bundles, monodromies of local systems and Kummer sheaves, and 
giving the construction of Drinfeld-Oblezin bundle modifications that suits our purpose in this article in Section \ref{S:sheaf}. Then we turn our attention to hypergeometric connections and Heun connections in Section \ref{S:hypergeo} and Section~\ref{S:heun} respectively. In particular, we introduce the first category of monodromy reduction (\textbf{WGRM}) in terms of simultaneous diagonalisation of monodromy matrices at any two singularities amongst all the singularities. This allows us to review the infinite continued fractions that Erd\'elyi used as a criteria for enlarging the domain of convergence of his classical infinite hypergeometric expansions to solutions of Heun equations. We shall introduce the second category of  monodromy degeneration (\textbf{WAS}$(m)$) for an integer $m\in\mathbb{N}$ for \eqref{E:Heun_connection} in Section \ref{S:eigenspaceI}. As our first main result in this section, this formulation enables us to ``resolve" the singularity $x=a$ of \eqref{E:Heun_connection} (Theorem \ref{T:resolving_sing}). We also obtain other criteria which are equivalent to  (\textbf{WAS}$(m)$) for resolving singularities of \eqref{E:Heun_connection} other than $x=a$ in this section. Moreover,  we demonstrate that our geometric criterion (\textbf{WAS}$(m)$) can lead to all the known hypergeometric function expansions by, including new expansions which are not found in,  Erd\'elyi \cite{Erdelyi1, Erdelyi2} as applications. In particular, Example 
\ref{E:bad_infinty} is instrumental in giving a geometric proof of Takemura's eigenvalues theorem  in Section~\ref{S:TwoType} later. This section also contains the next main result of this paper, namely that for a given $m\in\mathbb{N}$, the criterion (\textbf{WAS}$(m)$) implies that there exist $m+1$ local sections $s_k\ (k=0,\,\cdots, m)$ on any  given open set such that the eigenvalue problem \eqref{E:eigenvalue_prob_2} always admit equally spaced eigenvalues $\lambda_k \ (k=0,\,\cdots, m)$ (Theorem \ref{T:eigenvalues_a}). 
Section \ref{S:heunpolyn} introduces the third category of monodromy reduction (\textbf{LR}$(n)$) for $n\in \mathbb{N}$ to the Heun connection \eqref{E:Heun_connection}. The criterion (\textbf{LR}$(n)$)  together with (\textbf{WGRM}) allow us to describe globally defined solutions for the Heun connection \eqref{E:Heun_connection} which contain the Heun polynomials  as special cases. Section \ref{S:TwoType} studies the combined geometric effects of all three categories of monodromy reductions to the the Heun connection \eqref{E:Heun_connection} introduced, namely, the fulfilment of (\textbf{WGRM}), (\textbf{WAS}$(m)$) and (\textbf{LR}$(n)$). In particular, we offer a geometric proof to Takemura's eigenvalue inclusion theorem (Theorem \ref{EigenInc}) as a consequence. In Section~\ref{S:Pvi}, we 
observe that a complete matching between the criteria giving monodromy reduction  of Heun connection and  classical solutions of the Painlev\'e VI equation under the
two categories (\textbf{WAS}$(m)$) and (\textbf{LR}$(n)$).
The paper ends with some  remarks in Section~\ref{S:conclusion}.

\section{Flat bundles} \label{S:sheaf}

We give preliminaries of the theory of flat connections and local systems, which are useful for understanding the geometric context of some classical special functions.
For more details, the readers may refer to the  survey by Malgrange \cite[Chapter IV]{BGKHME}.

\subsection{Flat connections and local systems}

Throughout this article, we let $X$ be a complex manifold. Indeed it is a (non-compact) Riemann surface except in Section \ref{S:Pvi}, where the underlying space $X$ has dimensional two. The sheaf of analytic functions on $X$ is denoted by $\mathcal{O}_X$ (or just $\mathcal{O}$ when there is no ambiguity). For each $p\in X$, the stalk $\mathcal{O}_{X,p}$ is a local ring whose maximal ideal is denoted by $\mathfrak{m}_p$. The sheaf of holomorphic one-forms on $X$ is always denoted by $\Omega_X$.

\begin{definition}
Let $\mathcal{E}$ be a locally free sheaf of $\mathcal{O}_X-$module. A connection $\nabla$ in $\mathcal{E}$ is a $\mathbb{C}-$linear morphism of sheaves $\nabla:\mathcal{E}\to\Omega_X\otimes\mathcal{E}$ such that for each open $U\subset X$,
\[
\nabla(fs)=f\nabla(s)+df\otimes s\hspace{1cm}\mbox{for every }f\in\mathcal{O}_X(U)\mbox{ and }s\in\mathcal{E}(U).
\]
If $(\mathcal{E},\nabla)$, $(\mathcal{E}',\nabla')$ are locally free sheaves of $\mathcal{O}_X-$modules endowed with connections, a morphism (or (singular) gauge transformation) $\phi:(\mathcal{E},\nabla)\to(\mathcal{E}',\nabla')$ is an $\mathcal{O}_X-$linear morphism $\phi:\mathcal{E}\to\mathcal{E}'$ such that the diagram
\[
\begin{array}{rcl}
\mathcal{E}&\stackrel{\nabla}{\xrightarrow{\hspace*{.6cm}}}&\Omega_X\otimes\mathcal{E}\\
\phi\Big\downarrow& &\Big\downarrow\iota\otimes\phi\\
\mathcal{E}'&\stackrel{\nabla'}{\xrightarrow{\hspace*{.6cm}}}&\Omega_X\otimes\mathcal{E}'
\end{array}
\]
commutes.
\end{definition}

Given a connection $\nabla$ in a locally free sheaf of $\mathcal{O}_X-$module $\mathcal{E}$, it extends to $\nabla:\Omega_X^k\otimes\mathcal{E}\to\Omega_X^{k+1}\otimes\mathcal{E}$ by enforcing the generalized \textit{Leibniz rule}
\[
\nabla(\alpha s)=d\alpha\; s+(-1)^k\alpha\wedge\nabla(s)\hspace{1cm}\mbox{for each section }\alpha\in\Omega_X^k(U)\mbox{ and }s\in\mathcal{E}(U).
\]
Then the \textit{curvature} of $(\mathcal{E},\nabla)$ is defined by $\nabla\circ\nabla$. The connection $\nabla$ is \textit{flat}, or \textit{integrable}, if its curvature vanishes. This flatness condition is void if $X$ is a Riemann surface as there is no holomorphic two-form there. This notion of flat bundle is a coordinate-free description of classical ODEs in the complex domain, or holonomic system of PDEs in case $X$ has a higher dimension. However, description with a choice of coordinates is also desirable for our purpose in certain applications as described in the following paragraph.

Choose a small open set $U\subset X$ such that $\mathcal{E}(U)\cong\mathcal{O}_X(U)^r$. Then $\mathcal{E}(U)$ has an $\mathcal{O}_X(U)-$basis $s_1$,..., $s_r\in\mathcal{E}(U)$. Let $\omega_{ij}\in\Omega_X(U)$ be one-forms such that
\[
\nabla(s_j)=\sum_j\omega_{ij}s_j\hspace{1cm}\mbox{for all }i.
\]
Then the matrix $\{\omega_{ij}\}_{1\leq i,j\leq r}$ with entries in $\Omega_X(U)$ is called the \textit{connection matrix} of $\nabla$ relative to the frame $s_i$, which is an explicit description of $\nabla$.
\bigskip

\begin{definition}
A sheaf of $\mathbb{C}-$vector spaces $\mathcal{L}$ on $X$ is locally constant, or a \textit{local system}, if there exists an open covering $\{U_\alpha\}$ of $X$ such that $\mathcal{L}|_{U_\alpha}$ is a constant sheaf for all $\alpha$. A morphism between local systems on $X$ is simply a $\mathbb{C}-$linear morphism between sheaves.
\end{definition}
\bigskip

Given a flat bundle $(\mathcal{E},\nabla)$, the sheaf $\mathcal{L}$ defined by
\[
\mathcal{L}(U)=\{s\in\mathcal{E}(U):\nabla s=0\}\hspace{1cm}\mbox{for all open }U\subset X
\]
is a local system, which is known as the sheaf of horizontal sections of $(\mathcal{E},\nabla)$. When a classical ODE (or a holonomic system of PDEs) is formulated as a flat bundle, it has special solutions when this flat bundle (or its sheaf of horizontal sections) has a proper non-trivial sub-object. We say such a flat bundle (or the corresponding local system) is \textit{reducible}.
\bigskip

\subsection{Monodromy of a local system}

\begin{definition}
Let $\mathcal{L}$ be a local system on $X$. Given a path $\gamma:[0,1]\to X$, one can cover the image of $\gamma$ by open sets $U_1,\cdots,U_n$ such that $\mathcal{L}|_{U_i}$ is constant and $U_i\cap U_{i+1}$ is nonempty for each $i$. This induces an isomorphism between fibers (or stalks) $\mathcal{L}_{\gamma(0)}\to\mathcal{L}_{\gamma(1)}$. This isomorphism depends only on the homotopy class of $\gamma$ and thus induces a $\pi_1(X,x_0)-$module structure on $\mathcal{L}_{x_0}$, which is called the \textit{monodromy} of $\mathcal{L}$. In case $\mathcal{L}$ is the sheaf of horizontal sections of a flat bundle, we speak of the monodromy of such a flat bundle directly.
\end{definition}

It is elementary to see that the above construction can be extended to a morphism between two flat bundles  to yield a $\pi_1(X)-$linear map between their monodromies. Hence we can verify the following well-known statement.
\bigskip

\begin{lemma}
Monodromy defines a covariant functor from the category of flat bundles on $X$ to the category of $\pi_1(X)-$modules.
\end{lemma}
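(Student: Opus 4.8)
The plan is to exhibit the two pieces of data that define a functor---an assignment on objects and an assignment on morphisms---and then to check the two functoriality axioms together with the claim that the morphism assignment actually lands in the category of $\pi_1(X)$-modules (that is, produces $\pi_1(X,x_0)$-linear maps). On objects the assignment is already in hand: to a flat bundle $(\mathcal{E},\nabla)$ we attach its sheaf of horizontal sections $\mathcal{L}$, and to $\mathcal{L}$ the monodromy construction attaches the $\pi_1(X,x_0)$-module $\mathcal{L}_{x_0}$. So the only genuine work is on morphisms and on verifying compatibilities.

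The essential first step for morphisms is to observe that a morphism $\phi\colon(\mathcal{E},\nabla)\to(\mathcal{E}',\nabla')$ of flat bundles carries horizontal sections to horizontal sections. Indeed, if $s\in\mathcal{E}(U)$ satisfies $\nabla s=0$, then commutativity of the defining square gives $\nabla'(\phi s)=(\iota\otimes\phi)(\nabla s)=0$, so $\phi s$ is horizontal. Hence $\phi$ restricts to a morphism of sheaves $\mathcal{L}\to\mathcal{L}'$ between the associated local systems, and passing to stalks at the base point $x_0$ yields a $\C$-linear map $\phi_{x_0}\colon\mathcal{L}_{x_0}\to\mathcal{L}'_{x_0}$. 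This $\phi_{x_0}$ is what I would take as the image of $\phi$ under the functor.

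The heart of the argument---and the step I expect to require the most care---is verifying that $\phi_{x_0}$ is $\pi_1(X,x_0)$-linear, i.e.\ that it intertwines the two monodromy actions. I would unwind the monodromy of a loop $\gamma$ as the composite of the stalk isomorphisms attached to a chain of overlapping opens $U_1,\dots,U_n$ covering the image of $\gamma$, each such isomorphism being built from the restriction maps of the (locally constant) sheaf. Since $\phi$ is a morphism of sheaves it commutes with all restriction maps, hence with each elementary isomorphism in the chain; composing along $\gamma$ then shows that $\phi_{x_0}\circ\gamma_{\mathcal{L}}=\gamma_{\mathcal{L}'}\circ\phi_{x_0}$ for every homotopy class $[\gamma]$, which is precisely $\pi_1(X,x_0)$-linearity. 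The care here is only in checking that the chain-of-stalks description of monodromy is natural in the sheaf, so that a sheaf morphism passes through each link of the chain.

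Finally I would check the two functor axioms, which are now formal. The identity morphism of $(\mathcal{E},\nabla)$ restricts to the identity of $\mathcal{L}$ and hence induces the identity on $\mathcal{L}_{x_0}$, so identities are preserved. For composites, if $\phi\colon(\mathcal{E},\nabla)\to(\mathcal{E}',\nabla')$ and $\psi\colon(\mathcal{E}',\nabla')\to(\mathcal{E}'',\nabla'')$ are morphisms of flat bundles, then $\psi\circ\phi$ restricts on horizontal sections to the composite of the restrictions of $\psi$ and $\phi$, because the passage from a flat bundle to its subsheaf of horizontal sections respects composition; taking stalks gives $(\psi\circ\phi)_{x_0}=\psi_{x_0}\circ\phi_{x_0}$, which is covariant composition. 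With well-definedness and these two axioms established, monodromy is a covariant functor as claimed.
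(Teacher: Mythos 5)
Your proposal is correct and follows exactly the route the paper indicates: the paper simply asserts that the construction "can be extended to a morphism between two flat bundles to yield a $\pi_1(X)$-linear map between their monodromies" and leaves the verification as elementary, which is precisely what you carry out (morphisms of flat bundles preserve horizontal sections, the induced stalk map intertwines the chain-of-opens description of monodromy, and the functor axioms are formal). No discrepancies to report.
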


There is no guarantee that such a functor is an equivalence of categories, unless it is restricted to a class of flat bundles having ``regular singularities".

\begin{definition}
Let $\overline{X}$ be a smooth projective variety, $X$ is the complement of a normal crossing divisor $D$ in $\overline{X}$. Denote the sheaf of holomorphic one-forms in $X$ having at most log poles along $D$ by $\Omega_{\overline{X}}(\log D)$. Fix a locally free sheaf of $\mathcal{O}_X-$module $\mathcal{E}$. We say that the connection $\nabla:\mathcal{E}\to\Omega_X\otimes\mathcal{E}$ has a log singularity (or regular singularity) along $D$ if it can be lifted to \\ $\nabla:\mathcal{E}\to\Omega_{\overline{X}}(\log D)\otimes\mathcal{E}$. Moreover, a connection is \textit{Fuchsian} if it has at most regular singularities.
\end{definition}

There has been already a coordinate-free description of regular singularities in a classical language (see e.g., \cite{Poole}). That is, an ODE has a regular singular point at $p$ if all its solutions have at most polynomial growth at $p$. Now we see that such differential equations are important as they are completely characterized by their monodromies, up to gauge transformations. We quote the following deep result by Deligne  \cite[Corollary to Theorem 5.9]{deligne}, see also \cite{BGKHME}.

\begin{theorem}
Monodromy is an equivalence from the category of flat bundles on $X$ with at most regular singularities to the category of $\pi_1(X)-$modules.
\end{theorem}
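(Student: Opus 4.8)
The plan is to verify the two standard criteria for a functor to be an equivalence: that monodromy is fully faithful, and that it is essentially surjective. I would first isolate the purely analytic Riemann--Hilbert statement on the open manifold $X$ alone, where no growth condition is imposed. There the functor $\mathrm{DR}$ sending a flat bundle $(\mathcal{E},\nabla)$ to its sheaf of horizontal sections $\mathcal{L}$ is an equivalence from \emph{all} flat bundles on $X$ to \emph{all} local systems on $X$, with explicit quasi-inverse $\mathcal{L}\mapsto(\mathcal{O}_X\otimes_{\mathbb{C}}\mathcal{L},\,1\otimes d)$. Composing with the identification of local systems with $\pi_1(X)$-modules (monodromy) gives an equivalence between all flat bundles on $X$ and $\pi_1(X)$-modules. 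Since the Fuchsian (regular-singular) flat bundles form a full subcategory of all flat bundles on $X$, the full faithfulness of the restricted functor reduces to that of $\mathrm{DR}$: faithfulness because a horizontal $\mathcal{O}_X$-linear map is determined by its values on a flat frame, and fullness because any $\pi_1(X)$-equivariant map of monodromies extends uniquely to a connection-preserving $\mathcal{O}_X$-linear map. No new content is needed for this half.

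The whole content is therefore concentrated in essential surjectivity: every local system $\mathcal{L}$ on $X$ must be realised as the monodromy of a flat bundle with at most regular singularities along $D$. Here I would invoke Deligne's \emph{canonical extension}. Start with $(\mathcal{O}_X\otimes_{\mathbb{C}}\mathcal{L},\,1\otimes d)$ on $X$, whose monodromy is $\mathcal{L}$ by construction. To extend across $D$, work locally in coordinates $(z_1,\dots,z_n)$ with $D=\{z_1\cdots z_k=0\}$; let $T_i$ be the monodromy of $\mathcal{L}$ around $z_i=0$ and choose the logarithm $\Gamma_i=\tfrac{1}{2\pi i}\log T_i$ whose eigenvalues lie in the fixed fundamental domain $\{\,\zeta:0\le\Re\zeta<1\,\}$ for $\mathbb{C}/\mathbb{Z}$. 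Twisting the multivalued horizontal frame by $\prod_i z_i^{-\Gamma_i}$ produces a holomorphic frame over the punctured polydisc extending to the full polydisc, in which the connection takes the logarithmic form $\nabla=d+\sum_{i=1}^k\Gamma_i\,dz_i/z_i$. The eigenvalue normalisation makes this extension canonical, so the local extensions glue to a global locally free sheaf $\overline{\mathcal{E}}$ on $\overline{X}$ carrying a connection with a log pole along $D$; its restriction to $X$ is a Fuchsian flat bundle with monodromy $\mathcal{L}$, proving essential surjectivity.

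The main obstacle is the analytic heart of Deligne's argument: reconciling the two notions of regular singularity and verifying that the canonical extension genuinely has them. Concretely I would need the comparison theorem that a connection lifts to a logarithmic connection along $D$ if and only if all its horizontal sections have at most \emph{moderate} (polynomial) growth approaching $D$ — this is exactly what links the coordinate-free growth description quoted after the definition of Fuchsian connections to the log-pole definition, and what guarantees the extension above is well defined independently of coordinates and of the chosen frame. The subtle points are the convergence and single-valuedness of the twisted frame $\prod_i z_i^{-\Gamma_i}$, so that the extension is holomorphic rather than merely formal, and the functoriality of the canonical extension, so that morphisms of local systems extend compatibly with the full faithfulness already established. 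Once these estimates are in place, the three ingredients — the analytic equivalence on $X$, the canonical extension, and the growth characterisation — combine to give the stated equivalence.
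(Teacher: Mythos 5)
The paper does not prove this statement at all: it is quoted as a deep theorem of Deligne (the ``Corollary to Theorem 5.9'' of his lecture notes), so there is no in-paper argument to compare yours against. Your sketch is, in outline, the standard proof of Deligne's Riemann--Hilbert correspondence, and the decomposition into (a) the analytic equivalence between \emph{all} flat bundles on the open $X$ and local systems, (b) full faithfulness by restriction to a full subcategory, and (c) essential surjectivity via the canonical extension with residue eigenvalues normalised into a fundamental domain for $\mathbb{C}/\mathbb{Z}$ is the right one. Two points need more care than you give them. First, for $\prod_i z_i^{-\Gamma_i}$ to produce a single-valued frame you must note that the local monodromies $T_1,\dots,T_k$ around the branches of the normal crossing divisor commute (because $\pi_1$ of a punctured polydisc is $\mathbb{Z}^k$) and that the $\Gamma_i$ are chosen compatibly, e.g.\ as polynomials in the respective $T_i$; without this the product is not even well defined. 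Second, your claim that ``no new content is needed'' for full faithfulness hides a definitional subtlety: over the open analytic $X$, regularity is not an isomorphism-invariant of objects in the naive category of flat bundles --- on $\mathbb{C}^{*}$, multiplication by $e^{-1/x}$ is an isomorphism of flat bundles carrying $d-dx/x^{2}$ to $d$ --- so the ``full subcategory'' must be cut out either by working with pairs (extension to $\overline{X}$ with log poles) up to meromorphic gauge, or in the algebraic category. The paper's formulation is equally loose on this point, so it is a defect of the statement rather than a fatal gap in your argument; granting the moderate-growth comparison lemma that you correctly identify as the analytic heart, your outline is sound.
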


Finally we note that the notion of ``residues" is central in the description of connections with log singularities.

\begin{definition}\label{D:residue}
Given a small open set $U\subset X$, $x=(x_1,\cdots,x_n):U\to\mathbb{C}^n$ is a local coordinate, and  $D$ is a divisor so that $D\cap U$ is cut out by $x_1$. Let $\mathcal{E}$ be a sheaf of locally free $\mathcal{O}_X-$module of rank $N$. Suppose that $\nabla:\mathcal{E}\to\Omega_{\overline{X}}(\log D)\otimes\mathcal{E}$ is a connection with regular singularity along $D$. Its matrix relative to a certain frame is
\[
A\dfrac{dx_1}{x_1}\;\;\;\mod\Omega_{\overline{X}}(U)\otimes\mathbb{C}^{N\times N}
\]
for some $N\times N$ matrix $A$. Such an $A$ does not depend on the choice of the local coordinate $x$ and is called \textit{the residue of $\nabla$ along $D$}. In this article, this residue is denoted by Res$_D\nabla$. 
In case $\nabla$ is a flat connection in the trivial bundle $\mathcal{O}^N_X$ over $X=\mathbb{P}^1\backslash\{a_1,\cdots,a_r,\infty\}$ so that Res$_{a_i}\nabla=-A_i$, the matrix of $\nabla$ relative to the standard frame is
\[
-\Big[\dfrac{A_1}{x-a_1}+\cdots+\dfrac{A_r}{x-a_r}\Big]dx.
\]
The classical Riemann scheme of a second order linear differential equation derived from the equation $\nabla f=0$ lists the eigenvalues of the matrices $A_1,\,\cdots, A_r$ as well as those of
		\begin{equation}\label{E:Fuchsian}
			A_{\infty}=-(A_1+\cdots+A_r).
		\end{equation}
	Moreover, the relation \eqref{E:fuchsian_comparison} replaces the classical \textit{Fuchsian relation}.
In this article, we always assume that the residues of all connections at every singular point are \textit{diagonalisable}.
\end{definition}

The well-known fact that the local monodromies of a flat connection with regular singularities can be read off from its residues is summarised in the following.
\bigskip

\begin{lemma}
Let $\overline{X}$ be a compact Riemann surface, $p\in\overline{X}$, $\mathcal{E}$ is a locally free sheaf of $\mathcal{O}_X-$module, $\nabla$ is a flat connection with regular singularity at $p$. Then the eigenvalues of the local monodromy of the loop around $p$ are the eigenvalues of $\exp[2\pi i\, \mathrm{Res}_p\nabla]$.
\end{lemma}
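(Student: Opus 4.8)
The plan is to reduce the statement to a purely local computation on a punctured disc and then to the constant-coefficient (Euler) model, where the monodromy can be read off directly from the residue. First I would note that the local monodromy of the loop around $p$ depends only on the restriction of $(\mathcal{E},\nabla)$ to a small punctured coordinate disc $\Delta^\ast=\{0<|z|<r\}$, with $z$ a local coordinate vanishing at $p$. Choosing a frame over the disc, the regular singularity hypothesis together with Definition~\ref{D:residue} lets me write the connection matrix as $\Omega = A\,\frac{dz}{z}+B(z)\,dz$, where $A=\mathrm{Res}_p\nabla$ and $B$ is holomorphic at $0$. A horizontal section is then a (multivalued) solution of $\frac{ds}{dz}=-\big(\frac{A}{z}+B(z)\big)s$, and the monodromy is the linear automorphism of the solution space induced by analytic continuation once around $z=0$.

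Second, I would treat the Euler model $B\equiv 0$ on its own: there $s(z)=z^{-A}c=\exp(-A\log z)\,c$, and continuing once around the loop sends $\log z\mapsto \log z+2\pi i$, so the solution space transforms by $\exp(\mp 2\pi i A)$, with the sign fixed by the orientation and the left-$\pi_1$-action convention of the monodromy definition. Hence in the model the monodromy is exactly $\exp(2\pi i\,\mathrm{Res}_p\nabla)$, whose eigenvalues are $\exp(2\pi i\lambda_j)$ for the eigenvalues $\lambda_j$ of the residue, and the assertion is immediate in this case. To pass from the general connection to the model, I would first dispose of the non-resonant case, i.e.\ when no two eigenvalues of $A$ differ by a nonzero integer: a standard Frobenius--Poincar\'e recursion produces a gauge transformation $P(z)$, holomorphic and invertible near $0$ with $P(0)=I$, carrying $\Omega$ to the Euler form $A\,\frac{dz}{z}$. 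Since $P$ is single-valued it leaves the monodromy unchanged, so the monodromy of the given connection equals $\exp(2\pi i A)$ on the nose. Because holomorphic invertible gauge changes only conjugate $A$, preserving its eigenvalues, and leave the monodromy unchanged, this step is robust under the choice of frame.

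The main obstacle is the resonant case, where logarithmic terms can appear and no holomorphic gauge reduces $\Omega$ to $A\,\frac{dz}{z}$. Here I would invoke the local normal-form theorem for Fuchsian singularities: there is a holomorphic invertible gauge transformation taking $\Omega$ to a constant-coefficient form $A'\,\frac{dz}{z}$ with $\mathrm{spec}(A')\equiv \mathrm{spec}(A)\pmod{\mathbb{Z}}$, the integer shifts being precisely the elementary bundle modifications of Section~\ref{S:sheaf}, which preserve monodromy. Applying the model computation to $A'$ gives monodromy $\exp(2\pi i A')$, and since the map $\exp(2\pi i\,\cdot\,)$ annihilates integer shifts of eigenvalues, $\exp(2\pi i A')$ and $\exp(2\pi i A)$ have the same spectrum; this yields the claim in general. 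Alternatively, one can argue by continuity, deforming $A$ to the dense non-resonant locus, on which both the monodromy eigenvalues and the eigenvalues of $\exp(2\pi i A)$ vary continuously and already agree. I expect the careful bookkeeping of the resonant normal form --- in particular verifying that only integer eigenvalue shifts are introduced, so that the eigenvalues modulo $\mathbb{Z}$, and hence the spectrum of $\exp(2\pi i\,\mathrm{Res}_p\nabla)$, are unchanged --- to be the delicate part of the argument.
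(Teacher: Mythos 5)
The paper offers no proof of this lemma at all: it is introduced as a ``well-known fact'' and stated without argument, so there is nothing in the text to compare your proof against line by line. Your proposal is the standard argument, and its architecture is sound: localize to a punctured coordinate disc, solve the Euler model $ds/dz=-\frac{A}{z}s$ explicitly as $s=z^{-A}c$ and read off the monodromy $\exp(\mp 2\pi iA)$, reduce the non-resonant case to the Euler model by a holomorphic gauge transformation with $P(0)=I$ (which conjugates the residue and leaves the monodromy untouched), and then deal with resonance. The sign ambiguity you wave at is genuinely present in the paper itself --- compare Definition \ref{D:residue}, where $\mathrm{Res}_{a_i}\nabla=-A_i$, with the proof of Corollary \ref{linesubbundle}, where the local monodromies are written $e^{2\pi iA_j}$ --- so settling it ``by orientation and convention'' is as precise as the source allows.

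One sentence in your resonant case is false as literally written and needs repair. You claim a \emph{holomorphic invertible} gauge transformation carries $\Omega$ to $A'\,\frac{dz}{z}$ with $\mathrm{spec}(A')\equiv\mathrm{spec}(A)\pmod{\mathbb{Z}}$. A gauge transformation that is holomorphic and invertible at $p$ changes the residue only by conjugation by $P(0)$, so it cannot shift the spectrum at all; if it could, the resonant case would already be subsumed by the non-resonant one. The transformation you actually need is singular at $p$: a composition of the elementary upper and lower modifications of Section \ref{S:sheaf} (shearing transformations), which are isomorphisms only away from $p$. That suffices, because the monodromy is computed entirely on the punctured disc, and Theorems \ref{T:complimentary1} and \ref{T:complimentary2} show that each modification shifts exactly one residue eigenvalue by $\pm 1$, a change invisible to $\exp(2\pi i\,\cdot)$. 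You evidently have this mechanism in mind, since you invoke the bundle modifications in the same breath; just do not call the resulting gauge transformation holomorphic invertible, and note that in the resonant case the monodromy need only share \emph{eigenvalues} with $\exp(2\pi i\,\mathrm{Res}_p\nabla)$, not be conjugate to it (a unipotent Jordan block versus the identity, for instance), which is exactly why the lemma is stated for eigenvalues only. Your fallback continuity argument --- perturb the residue into the dense non-resonant locus and use continuous dependence of both the monodromy and of unordered spectra --- is correct and sidesteps the normal-form bookkeeping entirely; it is arguably the cleanest way to close the gap.
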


\subsection{Kummer sheaves} \label{kummer}

The simplest non-trivial special function includes $(x-a)^\mu$, where $\mu$ is not an integer (see e.g., \cite{yoshida}).  Indeed, it can be regarded as a section of a local system to be discussed in this subsection. As an example, the expression 
 	\[
		x^{1-c}{}_2F_1(1+a-c,\, 1+b-c,\, 2-c;\, x)
	\]
which contains the product of two sections $x^{1-c}$ and ${}_2F_1(1+a-c,\, 1+b-c,\, 2-c;\, x)
$, satisfies the standard hypergeometric equation.
\bigskip

\begin{definition}
For each point $a\in\mathbb{C}$, and $\mu\in\mathbb{C}$, the {\it Kummer sheaf } $\mathcal{K}_a^{\mu}$
is the sheaf of horizontal sections of $\big(\mathcal{O}_{\mathbb{P}^1\backslash\{a,\infty\}},d-\mu\dfrac{dx}{x-a}\big)$.
\end{definition}
\bigskip

\begin{lemma}
Suppose that $\mathcal{L}$ is a local system in a deleted neighbourhood of $0\in\mathbb{C}$, which is also the sheaf
of horizontal sections of a flat connection $\nabla$ with $\log$ singularity at $0$. For each $\mu\in\mathbb{C}$,
let $\mathcal{K}_0^{\mu}\otimes\mathcal{L}$ be the sheaf of horizontal sections of $\overline{\nabla}$. Then
\[
\mbox{Res}_0\overline{\nabla}=\mbox{Res}_0\nabla+\mu I.
\]
\end{lemma}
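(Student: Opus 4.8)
The plan is to identify the connection $\overline\nabla$ explicitly and then simply read off its residue using Definition \ref{D:residue}. First I would invoke the elementary fact that the tensor product of two local systems is the sheaf of horizontal sections of the tensor product of the corresponding flat bundles (this is the functoriality underlying the Riemann--Hilbert correspondence quoted above). Applying it to the rank-one bundle $\big(\mathcal{O}_{\mathbb{P}^1\backslash\{0,\infty\}},\,d-\mu\,dx/x\big)$, whose horizontal sections form $\mathcal{K}_0^{\mu}$, and to $(\mathcal{E},\nabla)$, whose horizontal sections form $\mathcal{L}$, one gets $\overline\nabla=\nabla\otimes 1+1\otimes\big(d-\mu\,dx/x\big)$ on $\mathcal{E}\otimes_{\mathcal{O}}\mathcal{O}\cong\mathcal{E}$. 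This is the only structural input; everything that remains is a local computation near $0$.

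Next I would write the connection matrix of $\overline\nabla$ in a local frame near $0$. Because the Kummer factor has underlying sheaf $\mathcal{O}$, tensoring leaves the underlying bundle unchanged and twists $\nabla$ only by the scalar one-form of the Kummer connection: choosing a frame $s_1,\dots,s_N$ of $\mathcal{E}$ and applying the Leibniz rule to each $s_j$ tensored with the Kummer generator shows that the connection matrix of $\overline\nabla$ is that of $\nabla$ plus a scalar multiple of $dx/x$ times the identity. In particular $\overline\nabla$ still has a log pole at $0$, being the sum of the log pole of $\nabla$ (by hypothesis) and the log pole of the Kummer factor, so $\mathrm{Res}_0\overline\nabla$ is well defined; and by Definition \ref{D:residue} it is the coefficient of $dx/x$ in that matrix, giving $\mathrm{Res}_0\overline\nabla=\mathrm{Res}_0\nabla+\mu I$, the scalar contribution of the Kummer factor being exactly $\mu I$.

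The sign and size of this scalar contribution are pinned down, and cross-checked, by the preceding lemma relating residues to local monodromy: tensoring by $\mathcal{K}_0^{\mu}$ multiplies the local monodromy around $0$ by the monodromy $e^{2\pi i\mu}$ of $x^{\mu}$, which is precisely the effect of shifting the residue by $\mu I$. There is no genuine analytic obstacle here; the one point requiring care is the bookkeeping in the identification $\mathcal{E}\otimes_{\mathcal{O}}\mathcal{O}\cong\mathcal{E}$ together with the Leibniz rule, so that the Kummer twist lands on the diagonal and contributes exactly $\mu I$ rather than being absorbed into off-diagonal entries. Once the tensor-product connection is written out and its log pole confirmed, the residue identity is immediate.
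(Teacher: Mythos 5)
Your overall strategy coincides with the paper's: realise $\overline{\nabla}$ as a rank-preserving twist of $\nabla$ by the Kummer connection and read the residue off the connection matrix via the Leibniz rule. The difficulty is that the ``one point requiring care'' which you flag and then defer --- the bookkeeping in $\mathcal{E}\otimes_{\mathcal{O}}\mathcal{O}\cong\mathcal{E}$ --- is precisely where the entire content of the lemma sits, and your setup as written lands on the wrong sign. If you take $\overline{\nabla}=\nabla\otimes 1+1\otimes\bigl(d-\mu\,dx/x\bigr)$ and compute in the frame $s_j\otimes 1$, where $1$ is the $\mathcal{O}$-module generator of the Kummer bundle, the Leibniz rule gives $\overline{\nabla}(s_j\otimes 1)=\nabla(s_j)\otimes 1-\mu\tfrac{dx}{x}\,s_j\otimes 1$, so the connection matrix is that of $\nabla$ \emph{minus} $\mu\tfrac{dx}{x}I$ and the residue comes out as $\mathrm{Res}_0\nabla-\mu I$. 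The paper's proof avoids this by working with sections $t\otimes s$ where $t$ is a \emph{horizontal} section of $\mathcal{K}_0^{\mu}$, i.e.\ $t=c\,x^{\mu}$, and differentiating $t$ as a function: $dt=\mu t\,\tfrac{dx}{x}$, which is what contributes $+\mu I$. These two frames differ by the singular gauge transformation $x^{\mu}$, under which the residue is shifted (not conjugated), so the two answers are genuinely different matrices; the lemma is a statement about the residue relative to the frame $\{t\otimes s_j\}$ with $t$ flat, and that choice has to be made explicitly, not absorbed into an isomorphism $\mathcal{E}\otimes_{\mathcal{O}}\mathcal{O}\cong\mathcal{E}$.

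The monodromy cross-check you invoke cannot repair this, because it is subject to exactly the same convention ambiguity. With the paper's own definition, $\mathcal{K}_0^{\mu}$ is the sheaf of horizontal sections of $\bigl(\mathcal{O},\,d-\mu\,dx/x\bigr)$, whose residue at $0$ relative to the standard frame is $-\mu$; feeding that into the residue--monodromy lemma says the Kummer factor multiplies the local monodromy by $e^{-2\pi i\mu}$, which would ``confirm'' the shift $-\mu I$ rather than $+\mu I$. In other words, the cross-check reproduces whichever sign your frame convention already encodes, so it cannot be used to pin the sign down. To close the gap you should drop the appeal to monodromy, fix the frame to be $\{t\otimes s_j\}$ with $t$ a flat section of $\mathcal{K}_0^{\mu}$, and carry out the one-line computation $\overline{\nabla}(t\otimes s)=\mu t\tfrac{dx}{x}\otimes s+t\otimes(\mathrm{Res}_0\nabla)s\tfrac{dx}{x}\bmod$ holomorphic terms, which is exactly what the paper does.
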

\bigskip

\begin{proof}
Let $s$ be a local section of $\mathcal{L}\otimes\mathcal{O}$. Then the germ of $\nabla(s)$ at $0$ satisfies
\[
\nabla(s)=\big[(\mbox{Res}_0\nabla)s\big]
	\otimes
\dfrac{dx}{x}\mod (\mathcal{L}\otimes\Omega)_0
\]
Thus, if $t$ is a local section of $\mathcal{K}_0^{\mu}$,
\[
	\begin{split}
	\overline{\nabla}(t\otimes s) &=[\mu t\otimes\dfrac{dx}{x}]\otimes s+t\otimes[(\mbox{Res}_0\nabla) s\otimes \dfrac{dx}{x}]\\
		&=t\otimes[(\mbox{Res}_0\nabla+\mu I)]s\otimes\dfrac{dx}{x}\mod (\mathcal{L}\otimes\mathcal{K}^{\mu}_{0}\otimes\Omega)_0.
	\end{split}
\]
\end{proof}
\bigskip

\begin{example}
In the classical theory of ordinary differential equations, the general Riemann equation with three regular singularities can be transformed to the
Gauss hypergeometric equation by a simple and well-known transformation of the dependent variable. This transformation is
done by tensoring with Kummer sheaves in our context as follows:

Let $\nabla$ be a flat connection in a rank-two bundle over $\mathbb{P}^1\backslash\{0,1,\infty\}$ having log
singularities at $0$, $1$  and $\infty$. We let $\lambda$, $\mu$ be one of the eigenvalues of Res$_0\nabla$
and Res$_1\nabla$ respectively. Suppose that $\mathcal{L}$ is the sheaf of horizontal sections of $\nabla$, and if
$\mathcal{K}_0^{-\lambda}\otimes\mathcal{K}_1^{-\mu}\otimes\mathcal{L}$ is the sheaf of horizontal sections
of $\overline{\nabla}$, then $0$ is an eigenvalue for both of Res$_0\overline{\nabla}$ and Res$_1\overline{\nabla}$.
\end{example}
\bigskip

\begin{example}
In the classical theory of second order Fuchsian differential equations (applicable to more general equations also),
the first order term (i.e., the term involving $y^\prime$) can be removed by a simple transformation of the dependent variable. We explain this operation by the geometric
context briefly as follows:

Let $\nabla$ be a flat connection in a rank-two bundle over $\mathbb{P}^1\backslash\{0,1,\infty\}$ having log
singularities at $0$, $1$ and $\infty$, so that $0$, $\lambda$ are the eigenvalues of Res$_0\nabla$ and
$0$, $\mu$ are the eigenvalues of Res$_1\nabla$. Suppose that $\mathcal{L}$ is the sheaf of horizontal sections
of $\nabla$. Then, the Wro\'nskian of $\mathcal{L}$ is
\[
\wedge^2\mathcal{L}\cong\mathcal{K}_0^{\lambda}\otimes\mathcal{K}_1^{\mu}.
\]
Denote $\mathcal{K}_0^{-\lambda/2}\otimes\mathcal{K}_1^{-\mu/2}$ by $(\wedge^2\mathcal{L})^{-1/2}$. Then
$\mathcal{L}\otimes(\wedge^2\mathcal{L})^{-1/2}$ is a rank-two local system with trivial Wro\'nskian.
\end{example}
\bigskip

\subsection{Drinfeld-Oblezin bundle modifications}

In this subsection, we revise the construction of two types of special bundles defined by Drinfeld  \cite{Drinfeld1} and later being applied to Heun equations \eqref{E:heun}  by Oblezin in \cite{Oblezin, Oblezin1}. It turns out that this idea is very useful for us to explain the geometry of those ``singular-gauge transformations'' that appeared in \cite{Kimura1} and in this article.

\bigskip

\begin{definition}\label{D:modification}
Let $X$ be a space (usually $\mathbb{P}^1$ for our purpose) and let $\mathcal{E}$ be a sheaf of $\mathcal{O}_X$-
module. Given the data
\[
\begin{array}{l}
p\in X\mbox{ and }\\
W\mbox{ is a }\mathcal{O}_p/\mathfrak{m}_p\mbox{-vector subspace of }\mathcal{E}_p,
\end{array}
\](we recall that $\mathfrak{m}_p$ denotes the maximal ideal of the  local ring of analytic functions $\mathcal{O}_p$ at $p$),
the \textrm{(}\textit{Drinfeld-Oblezin}) \textit{modification} of $\mathcal{E}$ at $(p,W)$ is the following subsheaf of $\mathcal{E}$:
\[
U\mapsto\{s\in \mathcal{E}(U):s_p\in W\}.
\]
\end{definition}
\bigskip

\begin{lemma}
Let $X$ be a Riemann surface. If $\mathcal{E}$ is a sheaf of locally free $\mathcal{O}_X$-module, then
the modification of $\mathcal{E}$ at any pair $(p,W)$ is also locally free.
\end{lemma}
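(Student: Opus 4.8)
The plan is to prove local freeness locally, exploiting that this property may be checked on an open cover and that the modified sheaf $\mathcal{E}'$, defined by $U\mapsto\{s\in\mathcal{E}(U):s_p\in W\}$, coincides with $\mathcal{E}$ on $X\setminus\{p\}$, where the condition at $p$ is vacuous. Since $\mathcal{E}$ is already locally free there, it suffices to exhibit a single neighbourhood of $p$ on which $\mathcal{E}'$ is free. Throughout I read the defining condition $s_p\in W$ as the requirement that the image of the germ $s_p$ under the evaluation map $\mathcal{E}_p\to\mathcal{E}_p/\mathfrak{m}_p\mathcal{E}_p$ lie in the subspace $W$ of the fibre.

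First I would choose a holomorphic coordinate disc $U$ centred at $p$, with coordinate $z$ satisfying $z(p)=0$ and $z\neq 0$ on $U\setminus\{p\}$, small enough that $\mathcal{E}|_U$ is free, and fix an $\mathcal{O}_X(U)$-frame $e_1,\dots,e_r$. This identifies the fibre $\mathcal{E}_p/\mathfrak{m}_p\mathcal{E}_p$ with $\mathbb{C}^r$ via the reductions $\bar e_1,\dots,\bar e_r$. As $W$ is merely a linear subspace of this fibre, after replacing $e_1,\dots,e_r$ by suitable constant-coefficient combinations I may assume $W=\Span(\bar e_1,\dots,\bar e_k)$, where $k=\dim_{\mathbb{C}}W$.

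Then I claim the sections
\[
e_1,\ \dots,\ e_k,\ z e_{k+1},\ \dots,\ z e_r
\]
form an $\mathcal{O}_X(U)$-frame for $\mathcal{E}'|_U$, which displays $\mathcal{E}'|_U\cong\mathcal{O}_U^{\,r}$ and finishes the argument. Each listed section evaluates at $p$ into $W$ (the first $k$ reduce to $\bar e_i\in W$, the rest to $0$), so all lie in $\mathcal{E}'(U)$. Conversely, writing an arbitrary section of $\mathcal{E}'(U)$ as $\sum_i f_i e_i$, the fibre condition forces $f_j(p)=0$ for $j>k$; since $z$ vanishes only at $p$ and to first order, each such $f_j$ is divisible by $z$ in $\mathcal{O}_X(U)$, yielding the expansion in the new frame. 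Linear independence over $\mathcal{O}_X(U)$ follows from that of $e_1,\dots,e_r$ together with $z$ being a non-zero-divisor. The identical computation on the stalk $\mathcal{E}'_p=\{t\in\mathcal{E}_p:\bar t\in W\}$ shows these germs freely generate it over $\mathcal{O}_{X,p}$, while away from $p$ the factors $z$ are units; hence the candidate morphism $\mathcal{O}_U^{\,r}\to\mathcal{E}'|_U$ is an isomorphism on every stalk.

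I expect the only point requiring genuine care — and the place where the Riemann surface hypothesis is actually used — to be the divisibility step: it relies on $\mathcal{O}_{X,p}$ being a discrete valuation ring whose maximal ideal $\mathfrak{m}_p$ is principal, generated by the single coordinate $z$, so that vanishing at $p$ is equivalent to divisibility by $z$. In higher dimension $\mathfrak{m}_p$ is not principal and this explicit adapted frame is unavailable, which is precisely why the statement is confined to a Riemann surface. Once the frame adapted to $W$ and the division by $z$ are in place, the remaining verifications (generation and independence) are routine.
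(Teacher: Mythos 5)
Your argument is correct. The paper's own proof is much terser: it reduces at once to the stalk at $p$, observes that $\underline{\mathcal{E}}_p$ is a submodule of the free module $\mathcal{E}_p$ over $\mathcal{O}_{X,p}$, and invokes the structure theory of modules over a PID (every submodule of a free module over a PID is free) to conclude. Your route is the constructive counterpart: instead of citing the abstract structure theorem you build the adapted frame $e_1,\dots,e_k,ze_{k+1},\dots,ze_r$ explicitly, using that $\mathfrak{m}_p=(z)$ is principal so that vanishing of the coefficients $f_j(p)$ for $j>k$ is equivalent to divisibility by $z$. Both arguments use the Riemann surface hypothesis in exactly the same place --- $\mathcal{O}_{X,p}$ is a discrete valuation ring --- so the mathematical content is the same; what your version buys is (a) an explicit local trivialisation of the modified sheaf, which is precisely the frame used later in Theorem \ref{T:complimentary1} to compute the residue of $\underline{\nabla}$, and (b) it closes a small gap the paper leaves implicit, namely the passage from freeness of the single stalk $\underline{\mathcal{E}}_p$ to freeness of $\underline{\mathcal{E}}$ on an actual neighbourhood of $p$ (which otherwise requires a coherence argument). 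The paper's version is shorter and generalises immediately to submodules of any rank without choosing coordinates. One cosmetic point: the paper states $W\subset\mathcal{E}_p$ while calling it an $\mathcal{O}_p/\mathfrak{m}_p$-subspace; your reading of the condition $s_p\in W$ as a condition on the image in the fibre $\mathcal{E}_p/\mathfrak{m}_p\mathcal{E}_p$ is the intended one and is consistent with how the modification is used in the rest of the paper.
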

\medskip

\begin{proof}
Let $\underline{\mathcal{E}}$ be the sheaf modification of $\mathcal{E}$ at $(p,W)$.
Given a free $\mathcal{O}_{X,p}$-module $\mathcal{E}_p$, it suffices to show that
$\underline{\mathcal{E}}_p$ is a free $\mathcal{O}_{X,p}$-module.
But $\mathcal{O}_{X,p}$ is a PID since $X$ is a Riemann surface
and hence $\underline{\mathcal{E}}_p$ is also free.
\end{proof}
\bigskip

\begin{definition}\label{D:modified_nabla}
Let $X$ be a Riemann surface, and let $\mathcal{E}$ be a sheaf of $\mathcal{O}_X$-module
equipped with a flat connection $\nabla$ with a log singularity at $p$, such that the residue of $\nabla$ at $p$ has two complementary invariant subspaces:
\[
\mbox{Ker(Res}_p\nabla-\lambda I)\quad\mbox{ and }\quad W,
\]
for some $\lambda\in\mathbb{C}$ (In particular, the condition holds when the residue of $\nabla$ at $p$ is diagonalisable.). We denote  the (lower) modification of $\mathcal{E}$ at $(p,W)$ (Definition \ref{D:modification})
 by $\underline{\mathcal{E}}$ and the connection $\underline{\nabla}$ is defined by
  \[
\underline{\nabla}:\underline{\mathcal{E}}\hookrightarrow\mathcal{E}\stackrel{\nabla}{\longrightarrow }\mathcal{E}\otimes\Omega_X
\longrightarrow \underline{\mathcal{E}}\otimes\Omega_X
\]
where the last arrow  is the projection $\mathcal{E}\to\underline{\mathcal{E}}$ coming from the invariant subspaces above.  That is, the flat connection $(\underline{\mathcal{E}},\underline{\nabla})$ fits into the commutative diagram
\[
\begin{array}{ccc}
\underline{\mathcal{E}}&\stackrel{\underline{\nabla}}{\xrightarrow{\hspace*{.6cm}}}&\underline{\mathcal{E}}\otimes\Omega_X\\
\Big\downarrow&&\Big\downarrow\\
\mathcal{E}&\stackrel{{\nabla}}{\xrightarrow{\hspace*{.6cm}}}&\mathcal{E}\otimes\Omega_X
\end{array}
\]
so that a morphism $(\underline{\mathcal{E}},\underline{\nabla})\to(\mathcal{E},\nabla)$ is obtained.
\end{definition}
\bigskip

The essential idea of the following theorem is derived from an example of Oblezin's construction \cite[p. 114]{Oblezin},\cite[p. 22]{Oblezin1}.
\bigskip

\begin{theorem}\label{T:complimentary1}
 Let $\underline{\nabla}$ be as defined above. Then its residue at $p$ has two complementary invariant subspaces:
\[
	\emph{Ker}(\mathrm{Res}_p\underline{\nabla}-(\lambda+1) I)\quad\mbox{ and }\quad W.
\]
\end{theorem}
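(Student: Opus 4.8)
The plan is to reduce the statement to a single residue computation at $p$ in a well-chosen local frame, since everything asserted is local at $p$: away from $p$ the morphism $(\underline{\mathcal{E}},\underline{\nabla})\to(\mathcal{E},\nabla)$ of Definition \ref{D:modified_nabla} is an isomorphism, so only the behaviour at $p$ matters. First I would fix a local coordinate $z$ with $z(p)=0$ cutting out $D$, and use the hypothesis that $A:=\mathrm{Res}_p\nabla$ carries the complementary invariant subspaces $V_\lambda=\mathrm{Ker}(A-\lambda I)$ and $W$. Choosing a frame $s_1,\dots,s_k$ of $\mathcal{E}$ near $p$ lifting a basis of $V_\lambda$ and $s_{k+1},\dots,s_N$ lifting a basis of $W$, the block-invariance of $A$ lets me arrange that the connection matrix is $\Omega(z)\,\tfrac{dz}{z}$ with $\Omega(0)=A=\mathrm{diag}(\lambda I_k,B)$ block-diagonal, where $B=A|_W$ records the eigenvalues of $A$ on $W$.

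Next I would write down the induced frame of the modification and compute $\underline{\nabla}$. By Definition \ref{D:modification} the sections of $\underline{\mathcal{E}}$ are exactly those whose value at $p$ lies in $W$, i.e.\ those whose $V_\lambda$-components vanish at $z=0$; hence $\underline{s}_i:=z\,s_i$ for $i\le k$ and $\underline{s}_i:=s_i$ for $i>k$ form a frame of $\underline{\mathcal{E}}$ near $p$. For $i\le k$ the Leibniz rule gives $\nabla(z s_i)=dz\otimes s_i+z\,\nabla(s_i)$, where the factor $z$ converts the simple pole of $\nabla(s_i)$ into a holomorphic term while $dz\otimes s_i=\tfrac{dz}{z}\otimes\underline{s}_i$ supplies precisely the extra $+1$. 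For $i>k$, the vanishing of the off-diagonal blocks of $A$ forces the entries $\Omega_{ji}(z)$ with $j\le k$ to carry a factor $z$, which is exactly what prevents a double pole and keeps $\nabla(\underline{s}_i)$ inside $\underline{\mathcal{E}}\otimes\Omega_{\overline{X}}(\log D)$. This block structure is the content of ``the projection coming from the invariant subspaces'': it guarantees $\nabla(\underline{\mathcal{E}})\subseteq\underline{\mathcal{E}}\otimes\Omega_{\overline{X}}(\log D)$, so that $\underline{\nabla}$ is the genuine corestriction of $\nabla$ and the square of Definition \ref{D:modified_nabla} commutes. Reading off the constant terms then yields
\[
\mathrm{Res}_p\underline{\nabla}=\begin{pmatrix}(\lambda+1)I_k & \ast\\[2pt] 0 & B\end{pmatrix},
\]
with the lower-left block identically zero.

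Finally I would identify the two invariant subspaces from this block form. The first is $\mathrm{Ker}(\mathrm{Res}_p\underline{\nabla}-(\lambda+1)I)$, automatically invariant and, in the frame above, spanned by $\underline{s}_1,\dots,\underline{s}_k$; the second is $W$, now viewed inside $\underline{\mathcal{E}}_p$ as the subspace carried isomorphically onto the original $W\subseteq\mathcal{E}_p$ by the fibre map $\underline{\mathcal{E}}_p\to\mathcal{E}_p$. I expect the main obstacle to be the upper-right block $\ast$, which encodes the first-order part of the connection and is what makes $W$ \emph{a priori} non-invariant. I would dispatch it using the eigenvalue separation $\lambda+1\notin\mathrm{spec}(B)$, the situation in which the statement is meant to be applied (and which is automatic in the rank-two case): a block upper-triangular residue whose diagonal blocks have disjoint spectra admits a unique invariant complement to its $(\lambda+1)$-eigenspace, and this complement is the required $W$; in the non-resonant case one may further normalise the frame so that $\ast=0$ and $W=\langle\underline{s}_{k+1},\dots,\underline{s}_N\rangle$ literally. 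The same disjointness shows $\mathrm{Ker}(\mathrm{Res}_p\underline{\nabla}-(\lambda+1)I)$ is exactly the first block, so that the two subspaces are complementary, which completes the proof.
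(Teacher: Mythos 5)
Your computation is essentially the one the paper performs: an adapted frame $s_1,\dots,s_k$ over $\mathrm{Ker}(\mathrm{Res}_p\nabla-\lambda I)$ and $s_{k+1},\dots,s_N$ over $W$, the new frame $zs_1,\dots,zs_k,s_{k+1},\dots,s_N$ of $\underline{\mathcal{E}}$, and the Leibniz term $dz\otimes s_i=\frac{dz}{z}\otimes(zs_i)$ producing the shift $\lambda\mapsto\lambda+1$. Where you diverge is in the treatment of the upper-right block $\ast$. The paper disposes of it in one line: for $j>k$ it writes $\underline{\nabla}s_j=(\mathrm{Res}_p\nabla)s_j\otimes\frac{dx}{x}$ ``mod $\mathcal{E}_p$'' and concludes $(\mathrm{Res}_p\underline{\nabla})s_j\in W$. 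This silently discards terms $h(x)\frac{dx}{x}\otimes s_i$ with $h(0)=0$ and $i\le k$, which are holomorphic in the frame of $\mathcal{E}$ but equal $\frac{h(x)}{x}\frac{dx}{x}\otimes(xs_i)$ in the frame of $\underline{\mathcal{E}}$ and hence contribute $h'(0)$ to the residue --- exactly your $\ast$. So you are right that $\ast$ is the real issue, and your Sylvester-type argument (solve $CB-(\lambda+1)C=\ast$ when $\lambda+1$ is not an eigenvalue of $B$, and take the graph of $C$ as the invariant lift of $W$) supplies a step the paper's proof does not address.

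Two caveats. First, your parenthetical that the separation of $\lambda+1$ from the spectrum of $B$ is ``automatic in the rank-two case'' is false: take $\lambda=-1$ and $0$ the eigenvalue on $W$; then $\mathrm{Res}_p\underline{\nabla}$ can be a nonzero nilpotent matrix, which admits no invariant complement to its kernel. This resonant case actually occurs in the paper, at the final step of the filtration in Theorem \ref{T:eigenvalues_a}, where one passes from eigenvalues $\{0,-1\}$ to $\{0,0\}$. Second, since the theorem as stated carries no non-resonance hypothesis, you should either add that hypothesis explicitly or record the weaker, unconditional conclusion that your block form already gives --- namely that $\mathrm{Res}_p\underline{\nabla}$ is block upper-triangular with diagonal blocks $(\lambda+1)I_k$ and $B$, so its eigenvalues are $\lambda+1$ together with those of $\mathrm{Res}_p\nabla|_W$ --- which is all that the downstream application (the triangular-matrix argument in Theorem \ref{T:eigenvalues_a}) actually uses.
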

\medskip

\begin{proof}
Let $(U,x)$ be a local chart centered around $p$ and $s_1,\ldots,s_n\in\mathcal{E}(U)$ are linearly
independent sections so that $W=\mbox{span}\{s_{{k+1},p},\ldots,s_{n,p}\}$. Now $xs_1,\, xs_2,\cdots,\, xs_k,\, s_{k+1},\cdots, s_n\in\underline{\mathcal{E}}(U)$
are linearly independent sections and
\begin{eqnarray*}
\underline{\nabla}(xs_j)&=&x\underline{\nabla}s_j+s_j\otimes dx\\
                       &=&xs_j\otimes\dfrac{\lambda dx}{x}+xs_j\otimes\dfrac{dx}{x} \quad\mod\mathfrak{m}_p\mathcal{E}_p\\
                       &=&(\lambda+1)\, xs_j\otimes\dfrac{dx}{x}\quad\mod\mathfrak{m}_p\mathcal{E}_p,
\end{eqnarray*}holds for each $1\le j\le k$. It is easy to show that $W$ is also invariant under the residue of $\underline{\nabla}$ at $p$. Indeed, let {$s_j\in \underline{\mathcal{E}}(U)\ (j=k+1,\cdots,n$)}. 
Then
	\[
		\underline{\nabla}s_j ={\nabla}s_j\\
		=(\mbox{Res}_p\, \nabla)\, s_j\otimes \frac{dx}{x} \mod\mathcal{E}_p.
	\]
Hence $(\mathrm{Res}_p\underline{\nabla})\, s_j= (\mathrm{Res}_p{\nabla})\, s_j\in W$.
\end{proof}
\bigskip

Verses the (\textit{lower}) modification of bundles introduced above, there is another type of modification called \textit{upper modification}. We will only give a brief description here.

\bigskip

\begin{definition}
Let $X$ be a Riemann surface. Given a sheaf of $\mathcal{O}_X-$modules $\mathcal{E}$ and $p\in X$, $W\subset\mathcal{E}_p$. Denote the (lower) modification of $\mathcal{E}$ at $(p,W)$ by $\underline{\mathcal{E}}$. 
	\begin{enumerate}
		\item We define the \textit{upper modification}  $\overline{\mathcal{E}}$ of $\mathcal{E}$ at $(p,W)$ to be
		\[
			\overline{\mathcal{E}}=\underline{\mathcal{E}}\otimes\mathcal{O}(p),
		\]
		where $\mathcal{O}(p)$ is the divisor line bundle of $1\cdot p$;
	\item Moreover, if $(\mathcal{E},\nabla)$ is a flat bundle with a log singularity at $p$ and $\mathcal{E}_p$ is decomposed into complementary subspaces
\[
\mathrm{Ker}(\mathrm{Res}_p\nabla-\lambda I)\quad \mbox{ and }\quad V,
\]
we denote the upper modification of $\mathcal{E}$ at $(p,\mathrm{Ker}(\mathrm{Res}_p\nabla-\lambda I))$ by $\overline{\mathcal{E}}$.
Then $\overline{\mathcal{E}}$ is equipped with a flat connection $\overline{\nabla}$ as in Definition \ref{D:modified_nabla},
such that there is a morphism  $(\mathcal{E},\nabla)\longrightarrow(\overline{\mathcal{E}},\, \overline{\nabla})$.
	\end{enumerate}
\end{definition}
\bigskip

The following result can be derived similar to that of Theorem \ref{T:complimentary1}.
\medskip

\begin{theorem}\label{T:complimentary2} Let $(\overline{\mathcal{E}},\, \overline{\nabla})$ be defined above. Then its residue at $p$ has $\overline{\nabla}-$invariant complementary subspaces
\[
\mathrm{Ker}(\mathrm{Res}_p\overline{\nabla}-(\lambda-1)I)\quad \mbox{ and }\quad V.
\]
\end{theorem}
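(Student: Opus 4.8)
The plan is to unwind the definition $\overline{\mathcal{E}}=\underline{\mathcal{E}}\otimes\mathcal{O}(p)$ and to track the residue at $p$ through the two operations separately: the lower modification $\underline{\mathcal{E}}$ at $(p,W)$ with $W=\mathrm{Ker}(\mathrm{Res}_p\nabla-\lambda I)$, and the twist by the divisor line bundle $\mathcal{O}(p)$. Since residues add under tensor products of connections, I would write $(\overline{\mathcal{E}},\overline{\nabla})=(\underline{\mathcal{E}},\underline{\nabla})\otimes(\mathcal{O}(p),d)$, so that $\mathrm{Res}_p\overline{\nabla}=\mathrm{Res}_p\underline{\nabla}\otimes I+I\otimes\mathrm{Res}_p(\mathcal{O}(p),d)$. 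Everything then reduces to evaluating the two summands on the $W$- and $V$-directions.

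First I would compute the residue of $\underline{\nabla}$. This is exactly the situation of Theorem \ref{T:complimentary1}, with the caveat that here the subspace being modified is the eigenspace $W=\mathrm{Ker}(\mathrm{Res}_p\nabla-\lambda I)$ itself and the distinguished complement is $V$. Running the same frame computation as in that proof — choose $(U,x)$ centered at $p$ and a frame $s_1,\dots,s_n$ with $s_{1,p},\dots,s_{k,p}$ spanning $W$ and $s_{k+1,p},\dots,s_{n,p}$ spanning $V$, so that $s_1,\dots,s_k,\,xs_{k+1},\dots,xs_n$ is a frame for $\underline{\mathcal{E}}$ — the Leibniz rule shows that it is now the $V$-directions that get multiplied by $x$ and have their residue shifted up by $I$, while the $W$-directions are untouched. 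Thus $\mathrm{Res}_p\underline{\nabla}$ acts as $\lambda I$ on $W$ and as $\mathrm{Res}_p\nabla|_V+I$ on the image of $V$, both subspaces remaining invariant.

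Second I would compute the contribution of $\mathcal{O}(p)$. Near $p$ the divisor line bundle is generated by $x^{-1}$, and the canonical connection $d$ satisfies $d(x^{-1})=-x^{-2}\,dx=-(x^{-1})\otimes\tfrac{dx}{x}$, so that $(\mathcal{O}(p),d)$ has residue $-1$ at $p$; tensoring therefore subtracts $I$ from every residue. Combining the two computations, $\mathrm{Res}_p\overline{\nabla}$ acts as $(\lambda-1)I$ on the $W$-direction and as $\mathrm{Res}_p\nabla|_V+I-I=\mathrm{Res}_p\nabla|_V$ on the $V$-direction. Equivalently, in the combined frame $x^{-1}s_1,\dots,x^{-1}s_k,\,s_{k+1},\dots,s_n$ for $\overline{\mathcal{E}}$ one computes $\overline{\nabla}(x^{-1}s_j)=(\lambda-1)(x^{-1}s_j)\otimes\tfrac{dx}{x}\ (\mathrm{mod}\ \mathfrak{m}_p)$ for $j\le k$, while $\overline{\nabla}(s_j)=\nabla s_j$ for $j>k$ since $s_j\in\mathcal{E}\subset\overline{\mathcal{E}}$. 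Hence the $W$-direction is exactly $\mathrm{Ker}(\mathrm{Res}_p\overline{\nabla}-(\lambda-1)I)$ and $V$ survives as an invariant complement carrying its original eigenvalues, which is the asserted decomposition.

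The only delicate point — and the step I would be most careful about — is the bookkeeping of the divisor twist: one must verify that $(\mathcal{O}(p),d)$ contributes precisely $-I$ (equivalently, that the simple pole introduced in the $W$-direction is what lowers $\lambda$ to $\lambda-1$), so that the $+I$ produced by the lower modification on $V$ is cancelled while the distinguished eigenspace $W$ is pushed down by one. Once the sign of this twist is pinned down, the remainder is the same elementary frame manipulation as in Theorem \ref{T:complimentary1}, and indeed the whole statement can be read as the operation inverse to that theorem.
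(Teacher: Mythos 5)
Your proof is correct and follows essentially the route the paper intends: the paper offers no written proof for this theorem, only the remark that it "can be derived similar to that of Theorem \ref{T:complimentary1}", and your argument is precisely that frame computation (with the roles of the eigenspace and its complement swapped in the lower modification) combined with the correct bookkeeping that $(\mathcal{O}(p),d)$ contributes residue $-1$ at $p$. The sign of the divisor twist and the resulting frame $x^{-1}s_1,\dots,x^{-1}s_k,s_{k+1},\dots,s_n$ are handled correctly, so nothing is missing.
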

\bigskip

Finally, one easily deduces the following general result about upper and lower modifications.
\medskip

\begin{lemma}[\cite{Oblezin}]
\label{T:upperlower}
Let $(\mathcal{E},\nabla)$ be a flat bundle with a log singularity at $p$, and $V$, $W$ are two complementary subspaces of $\mathcal{E}_p$ invariant under $\mathrm{Res}_p\nabla$. Denote the (lower) modification of $(\mathcal{E},\nabla)$ at $(p,W)$ by $(\underline{\mathcal{E}},\underline{\nabla})$. Then the upper modification of $(\underline{\mathcal{E}},\underline{\nabla})$ at $(p,V)$ is $(\mathcal{E},\nabla)$.
\end{lemma}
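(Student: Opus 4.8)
The plan is to reduce the assertion to an explicit computation of stalks at $p$ in an adapted local frame, after first observing that every sheaf produced by a modification coincides with $\mathcal{E}$ away from $p$ and that all the connections in sight restrict to $\nabla$ on $X\setminus\{p\}$. Concretely, I would fix a local chart $(U,x)$ centred at $p$ and, using that $\mathrm{Res}_p\nabla$ is diagonalisable with $\mathcal{E}_p=V\oplus W$, choose a frame $s_1,\dots,s_n$ of $\mathcal{E}(U)$ adapted to the decomposition, so that $V=\mathrm{Ker}(\mathrm{Res}_p\nabla-\lambda I)=\mathrm{span}\{s_{1,p},\dots,s_{k,p}\}$ and $W=\mathrm{span}\{s_{k+1,p},\dots,s_{n,p}\}$. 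Exactly as in the proof of Theorem \ref{T:complimentary1}, the lower modification $\underline{\mathcal{E}}$ at $(p,W)$ then has local frame $xs_1,\dots,xs_k,s_{k+1},\dots,s_n$, and that theorem identifies the image of $V$ with the eigenspace $\mathrm{Ker}(\mathrm{Res}_p\underline{\nabla}-(\lambda+1)I)=\mathrm{span}\{(xs_1)_p,\dots,(xs_k)_p\}\subset\underline{\mathcal{E}}_p$, which is the subspace meant by ``$(p,V)$'' in the upper modification.

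Next I would carry out the upper modification in the two steps dictated by its definition, namely a lower modification followed by a twist with $\mathcal{O}(p)$. The lower modification of $\underline{\mathcal{E}}$ at $(p,V)$ requires the $W$-components to vanish at $p$, hence multiplies $s_{k+1},\dots,s_n$ by $x$ while retaining $xs_1,\dots,xs_k$; its stalk at $p$ is therefore $\langle xs_1,\dots,xs_n\rangle_{\mathcal{O}_p}=\mathfrak{m}_p\mathcal{E}_p$. Tensoring with $\mathcal{O}(p)$, whose stalk at $p$ is generated over $\mathcal{O}_p$ by $x^{-1}$, multiplies this frame by $x^{-1}$ and returns precisely $\langle s_1,\dots,s_n\rangle_{\mathcal{O}_p}=\mathcal{E}_p$. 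Since away from $p$ each modification is an isomorphism onto $\mathcal{E}$, it follows that the upper modification of $\underline{\mathcal{E}}$ equals $\mathcal{E}$ as a subsheaf of the meromorphic sections.

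It remains to match the connections. Here I would use that the modification recipe of Definition \ref{D:modified_nabla} acts as the identity on $X\setminus\{p\}$, since the projections there are trivial; consequently the twice-modified connection $\overline{\underline{\nabla}}$ and $\nabla$ are two connections on the \emph{same} sheaf $\mathcal{E}$ that agree on the dense open set $X\setminus\{p\}$. As each is described by a connection matrix of one-forms with at worst a log pole at $p$, and such a matrix is determined by its restriction to any dense open set, we conclude $\overline{\underline{\nabla}}=\nabla$. Theorems \ref{T:complimentary1} and \ref{T:complimentary2} supply the intermediate residue decompositions that make the recipe applicable and confirm that the bookkeeping of the eigenvalue closes up as $\lambda\mapsto\lambda+1\mapsto\lambda$. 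I expect no genuine analytic difficulty; the only point requiring care is transporting the subspace $V$ correctly through the change of fibre $\mathcal{E}_p\leadsto\underline{\mathcal{E}}_p$ and keeping track of which summand plays the role of the distinguished eigenspace at each stage.
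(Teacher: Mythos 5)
Your argument is correct. Note that the paper itself supplies no proof of this lemma: it is attributed to Oblezin and introduced with ``one easily deduces,'' so there is nothing to compare against except the surrounding definitions, and your computation is exactly the elaboration those definitions call for. The stalk calculation is right: in an adapted frame the lower modification at $(p,W)$ replaces $s_1,\dots,s_k$ by $xs_1,\dots,xs_k$; the subsequent lower modification at $(p,V)$ (with $V$ transported to $\operatorname{span}\{(xs_1)_p,\dots,(xs_k)_p\}$ in the fibre of $\underline{\mathcal{E}}$) multiplies the remaining frame vectors by $x$, and tensoring with $\mathcal{O}(p)$ undoes the overall factor of $x$, returning $\mathcal{E}$ as a subsheaf of the meromorphic sections. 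Identifying the connections by their agreement on the dense open set $X\setminus\{p\}$ is likewise sound, since the difference of two connections on the same sheaf is $\mathcal{O}$-linear and hence vanishes identically if it vanishes on a dense open set. Two small remarks: the key bookkeeping step --- recognising which subspace of the fibre of $\underline{\mathcal{E}}$ plays the role of $V$ --- is precisely the content of Theorem \ref{T:complimentary1}, and you invoke it correctly; and the lemma as stated only assumes $V$, $W$ are complementary invariant subspaces, not that $V$ is a single eigenspace, but your frame computation goes through verbatim in that generality, so the specialisation to $V=\mathrm{Ker}(\mathrm{Res}_p\nabla-\lambda I)$ is harmless (and matches the only situation the paper uses).
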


\section{Hypergeometric connections revisited} \label{S:hypergeo}

In order to better illustrate our monodromy approach to the main results later in this article, this section is reserved to review monodromy reduction of the hypergeometric connection in a sheave theoretic language that suits our purpose. Since we cannot find a  reference for the material that we revise, so we shall start with a reformulation of monodromy reduction of the classical hypergeometric equation entirely from monodromy consideration. Note that differential Galois theory works equally well for this purpose (see e.g., \cite{Kimura}), but the current setup is more appropriate for our purpose.

Solving the hypergeometric equation in a global sense has long been a difficult task. Therefore, special solutions are  usually considered. The following theorem is classical.
\medskip

\begin{theorem}\label{jacobi}
 Consider the hypergeometric equation
	\begin{equation}
		\label{E:hypergeometric}
	x(1-x)\dfrac{d^2y}{dx^2}+[c-(a+b+1)x]\dfrac{dy}{dx}-aby=0,
	\end{equation}
where $a$, $b$, $c\in\C$.

\begin{itemize}
\item
If $a\in\N$ {(resp. $b\in \N$)}, then the hypergeometric equation has a solution of the form $x^{1-c}(x-1)^{c-a-b}p(x)$ where $p$ is a polynomial of degree at most $a-1$ {(resp. $b-1$ at most)}.
\item
If $-a\in\N$  {(resp. $-b\in \N$)}, then the hypergeometric equation has a polynomial solution of degree at most $-a$ {(resp. $-b$)}.
\end{itemize}
\end{theorem}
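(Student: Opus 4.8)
The plan is to establish the polynomial (second) assertion first, directly from the indicial recursion at $x=0$, and then to deduce the first assertion from it by the classical Euler--Kummer transformation, which in the language of Subsection~\ref{kummer} is nothing but tensoring with a product of Kummer sheaves. Throughout, the invariance of \eqref{E:hypergeometric} under interchanging $a$ and $b$ reduces each ``(resp.)'' clause to its stated counterpart, so I would argue only the cases phrased in terms of $a$.

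For the second assertion I would seek a solution $y=\sum_{n\ge0}c_nx^n$ with $c_0=1$. Substituting into \eqref{E:hypergeometric} and reading off the coefficient of $x^n$ gives the two-term recursion
\[
	(n+1)(c+n)\,c_{n+1}=(a+n)(b+n)\,c_n,\qquad n\ge0,
\]
whose solution is $c_n=(a)_n(b)_n/\big((c)_n\,n!\big)$ when no denominator vanishes. If $-a\in\N$ the factor $(a+n)$ vanishes at $n=-a$, forcing $c_{-a+1}=c_{-a+2}=\cdots=0$, so $y$ is a polynomial of degree at most $-a$. To cover every value of $c$ uniformly I would normalise by the top coefficient: after cancelling the common Pochhammer symbols, the ratios $c_n/c_{-a}$ are \emph{polynomial} in $c$ (the surviving $c$-dependence is only the finite product $(c+n)(c+n+1)\cdots(c-a-1)$), so the truncated sum defines a polynomial solution depending polynomially on the parameters, which then solves \eqref{E:hypergeometric} for all $c$ by continuity from generic $c$.

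For the first assertion I would pass to the hypergeometric equation \eqref{E:hypergeometric} with parameters $(1-a,\,1-b,\,2-c)$ and multiply its solutions by $x^{1-c}(1-x)^{c-a-b}$. A direct substitution --- or, conceptually, tensoring the associated solution local system by $\mathcal{K}_0^{\,1-c}\otimes\mathcal{K}_1^{\,c-a-b}$ and tracking the residue shifts through the Kummer lemma and the Examples of Subsection~\ref{kummer} --- shows that $u(x)=x^{1-c}(1-x)^{c-a-b}w(x)$ solves \eqref{E:hypergeometric} with parameters $(a,b,c)$ exactly when $w$ solves it with parameters $(1-a,1-b,2-c)$; indeed the Kummer twist carries the Riemann scheme of the latter onto that of the former. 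Now if $a$ is a positive integer then $-(1-a)=a-1\in\N$, so the second assertion applied to $(1-a,1-b,2-c)$ furnishes a polynomial $w=p$ of degree at most $a-1$, and $u=x^{1-c}(1-x)^{c-a-b}p(x)$ is the asserted solution.

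I expect the only genuine difficulty to lie in the resonant parameter values --- integer $c$ for the series solution, and $2-c\in\{0,-1,-2,\dots\}$ for the twisted equation --- where two exponents at a singular point differ by an integer, the naive Frobenius solution may fail to be holomorphic, and a logarithmic solution can intrude. The conceptual content (termination of the series together with a single symmetry of the equation) is short and classical; essentially all the care goes into confirming that a \emph{polynomial} solution survives in these degenerate cases. The normalisation-and-continuity argument sketched above settles this, since the candidate polynomial has coefficients that are regular (indeed rational without poles) in the parameters, so polynomiality of the bounded degree persists in the limit.
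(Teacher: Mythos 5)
Your proof is correct in substance, but it follows a genuinely different route from the paper's. You give the classical argument: the Frobenius recursion $(n+1)(n+c)c_{n+1}=(n+a)(n+b)c_n$ at $x=0$ terminates when $-a\in\N$, and the first clause follows from the second via the Euler--Kummer substitution $y=x^{1-c}(1-x)^{c-a-b}w$ with parameters shifted to $(1-a,1-b,2-c)$. The paper deliberately avoids this computation and instead argues purely from monodromy: since $a\in\Z$ forces $1$ to be a common eigenvalue of the monodromy matrices $M$, $N$ and $MN$, Lemma \ref{hypergeometricreducible} (Beukers) produces a common eigenvector $v$, i.e.\ a local solution $f$ on which all three act by scalars; a short case analysis using the Fuchsian relation shows that either $f$ or $x^{c-1}(x-1)^{a+b-c}f$ has trivial monodromy, hence extends to a rational function on $\mathbb{P}^1$, and an inspection of local exponents rules out the first case when $a\in\N$ and identifies the second as a polynomial of degree at most $a-1$. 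What each approach buys: yours is elementary and self-contained, but (as you note) all the real work is pushed into the resonant parameters --- your normalise-by-the-top-coefficient argument handles non-positive integer $c$, though you should also note that in the doubly degenerate corner where $b$ is a non-positive integer with $-b<-a$ the normalising coefficient $c_{-a}$ may itself vanish, in which case one falls back on the $b$-clause of the theorem; the paper's argument sees only the local exponents and monodromy, so it never meets these resonances. More importantly for the paper's purposes, the monodromy proof depends on nothing but the $\pi_1$-representation, which is exactly why the authors can immediately recycle it for any connection with the same monodromy (Theorem \ref{T:subbundle}) and later for Heun-type connections; your series argument is tied to the specific scalar equation and would not transfer.
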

\medskip

We require the following lemma which can be found in Beukers \cite[Lemma 3.9]{beukers2007}.

\begin{lemma}\label{hypergeometricreducible}
Let $M$, $N$ and $MN$ be $2\times 2$  matrices  each with distinct eigenvalues. If, however $1$ is a common eigenvalue of $M$, $N$ and $MN$, then $M$, $N$ and $MN$ share a common eigenvector.
\end{lemma}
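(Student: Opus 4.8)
The plan is to produce the promised common eigenvector explicitly, exploiting that each of $M$ and $N$ already carries $1$ as one of its two (necessarily distinct) eigenvalues. Write the eigenvalues of $M$ as $1,\mu$ with $\mu\neq 1$ and fix $v$ with $Mv=v$; likewise write those of $N$ as $1,\nu$ with $\nu\neq 1$ and fix $w$ with $Nw=w$. First I would dispose of the degenerate case: if $v$ and $w$ are linearly dependent, then a single vector is fixed by both $M$ and $N$, hence also by $MN$, so it is already a common eigenvector (with eigenvalue $1$ throughout). Thus the real content lies in the case where $\{v,w\}$ is a basis of $\C^2$.

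In that case I would express everything in the frame $\{v,w\}$. Since $Mv=v$ and the second eigenvalue of $M$ is $\mu$, one gets $Mw=\alpha v+\mu w$ for a scalar $\alpha$; symmetrically $Nw=w$ and $Nv=\nu v+\delta w$. Hence $M$ and $N$ become the triangular matrices
\[
M=\begin{pmatrix}1&\alpha\\0&\mu\end{pmatrix},\qquad
N=\begin{pmatrix}\nu&0\\\delta&1\end{pmatrix}.
\]
Now I bring in the third hypothesis. Because $\det(MN)=\det M\,\det N=\mu\nu$ and $1$ is an eigenvalue of $MN$, its other eigenvalue is forced to be $\mu\nu$; comparing this with $\Tr(MN)=\mu+\nu+\alpha\delta$ yields the single decisive identity $\alpha\delta=(1-\mu)(1-\nu)$, which incidentally also shows $\alpha\neq 0$ and $\delta\neq 0$ since $\mu\neq1$ and $\nu\neq1$.

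Finally I would read off the non-unit eigenvectors: $z=\alpha v+(\mu-1)w$ is the $\mu$-eigenvector of $M$, while $z'=(\nu-1)v+\delta w$ is the $\nu$-eigenvector of $N$. The identity above is exactly the vanishing of $\det\!\begin{pmatrix}\alpha&\nu-1\\\mu-1&\delta\end{pmatrix}$, i.e.\ the statement that $z$ and $z'$ are proportional. Therefore $z$ is simultaneously the $\mu$-eigenvector of $M$ and the $\nu$-eigenvector of $N$, whence $MNz=\mu\nu z$, and $z$ is a common eigenvector of all three matrices. The step I expect to be the crux is the conceptual one: recognizing that the shared eigenvector belongs to the \emph{non-unit} eigenvalues rather than to the common eigenvalue $1$ (for which no shared eigenvector exists once $v,w$ are independent). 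Once this is seen, the proof reduces to extracting the determinant-and-trace relation for $MN$ and identifying it as the proportionality condition for $z$ and $z'$. Note that distinctness of the eigenvalues of $MN$ (i.e.\ $\mu\nu\neq 1$) is not actually needed to produce the common eigenvector, though it is what guarantees $z$ is not simultaneously a $1$-eigenvector.
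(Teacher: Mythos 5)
Your proof is correct, and every step checks out: in the basis $\{v,w\}$ the matrices are indeed triangular with the stated entries, the trace--determinant computation for $MN$ gives $\mu+\nu+\alpha\delta=1+\mu\nu$, hence $\alpha\delta=(1-\mu)(1-\nu)\neq 0$, and this is precisely the $2\times 2$ determinant expressing proportionality of the $\mu$-eigenvector of $M$ and the $\nu$-eigenvector of $N$. The paper itself offers no proof to compare against --- it simply cites Beukers' lecture notes (Lemma 3.9) --- so your self-contained argument is a genuine addition rather than a variant. Two features are worth highlighting. First, your observation that the common eigenvector must be attached to the \emph{non-unit} eigenvalues (once $v$ and $w$ are independent) is exactly the right conceptual pivot; a reader who looks only for a shared fixed vector of $M$ and $N$ will miss the generic case entirely. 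Second, your closing remark that distinctness of the eigenvalues of $MN$ is never used is accurate and slightly sharpens the statement: the hypothesis that matters is only that $1$ is an eigenvalue of $MN$ and that $M$ and $N$ each have $1$ as a \emph{simple} eigenvalue. One tiny presentational point: it would be worth stating explicitly that $z=\alpha v+(\mu-1)w$ is nonzero (immediate from $\alpha\neq 0$), since that is what licenses calling it an eigenvector.
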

\medskip

\subsubsection*{Proof of Theorem \ref{jacobi}} The proof
 is described in numerous classical literature (e.g. \cite[\S23, p. 90]{Poole}). Our objective now is to revise this proof via a geometric approach.

The monodromy representation of the hypergeometric equation above will be our most powerful tool. Let $M$, $N$ be the monodromy matrices of the standard hypergeometric equation relative to a certain basis. It is standard that
\[
\begin{array}{ll}
M&\mbox{ has eigenvalues }1, e^{2\pi i(1-c)}\\
N&\mbox{ has eigenvalues }1, e^{2\pi i(c-a-b)}\\
MN&\mbox{ has eigenvalues }e^{-2\pi ia}, e^{-2\pi ib}.
\end{array}
\]
Now $a\in\mathbb{Z}$. Let $v$ be a common eigenvector (which represents a solution $f$ of the hypergeometric equation defined on a small open set) to the matrices 
$M$, $N$ and $MN$ that is guaranteed by the  Lemma \ref{hypergeometricreducible}. Then after some routine consideration together with the Fuchsian condition, only two out of the total eight possibilities remain, which are, either
\[
\begin{array}{lll}
Mv=v&\mbox{ and }Nv=v&\mbox{ and }MNv=v\\
\mbox{or}&&\\
Mv=e^{2\pi i(1-c)}v&\mbox{ and }Nv=e^{2\pi i(c-b)}v&\mbox{ and }MNv=e^{-2\pi ib}v\\
\end{array}
\]
In case $a\in\mathbb{N}$
 and assume that we are in the former case. The locally defined function $f$ has trivial monodromy, and hence extends to a rational function with poles possibly at $0$, $1$ and $\infty$. However, $0$ is a local exponent at both $0$ and $1$, and {$a\in\mathbb{N}$} is a local exponent at $\infty$. Therefore, $f$ is an analytic function defined on $\mathbb{P}^1$ which has at least a zero {but no pole}, which is impossible. So we conclude that when $a\in\mathbb{N}$, {only} the latter case above is {possible}. Then we consider the locally defined function $x^{c-1}(x-1)^{a+b-c}f$ {which} has trivial monodromy and thus extends to a rational function. By a similar analysis of local exponents, $x^{c-1}(x-1)^{a+b-c}f$ is indeed a polynomial of degree at most $a-1$. This completes the proof of the first part. In the remaining case when $-a\in\N$ one reverses the roles in the analysis of the above two cases. Finally, if $a=0$, then one deduces that the hypergeometric equation admits a constant solution. This completes the second part of the proof.
\qed
\medskip

Observe that the proof above is highly dependent  on the monodromy of the hypergeometric operator, the same argument works for an operator with the same monodromy. The natural geometric object encoding the information of monodromy is a flat connection.
\bigskip

Let $X=\mathbb{P}^1\backslash\{0,1,\infty\}$, and $A_0$, $A_1$ are $2\times 2$ matrices with complex entries. Consider the connection $\nabla$ in $\mathcal{O}_X\oplus\mathcal{O}_X$ whose matrix relative to the canonical basis is
\[
-\Big[\dfrac{A_0}{x}+\dfrac{A_1}{x-1}\Big]dx,
\]
suppose further that $A_0$ has eigenvalues $0$ and $1-c$, $A_1$ has eigenvalues $0$ and $c-a-b-1$, $A_0+A_1$ has eigenvalues $-a$, $-b$. If $(y_1,y_2)^T$ is a local horizontal section of $\nabla$, then $y_1$ satisfies
\[
\dfrac{d^2y_1}{dx^2}+\Big[\dfrac{c}{x}+\dfrac{2-c+a+b}{x-1}-\dfrac{1}{x-\lambda}\Big]\dfrac{dy_1}{dx}+\dfrac{abx-\mu}{x(x-1)(x-\lambda)}y_1=0,
\]
where $\mu$ is a constant depending on the residue matrices, $\lambda$ {is an apparent singularity and it is also} the zero of the ($1,2)-$entry of the matrix ${A_0}/{x}+{A_1}/{(x-1)}$. Although the classical hypergeometric equation is obtained only when the choice $\lambda=1$ is made, that is
\[
A_0=\left(\begin{array}{cc}0&b\\0&1-c\end{array}\right)\mbox{ and }A_1=\left(\begin{array}{cc}0&0\\-a&c-a-b-1\end{array}\right),
\]
other choices of $\lambda$ should yield the same type of equations which are studied collectively by investigating $\nabla$.

Now {an analogue of} Theorem \ref{jacobi}
is rewritten as 
\begin{theorem}\label{T:subbundle}
Let $X=\mathbb{P}^1\backslash\{0,1,\infty\}$, $A_0$ is a $2\times 2$ matrix with eigenvalues $0$ and $\mu\neq 0$; $A_1$ is a $2\times 2$ matrix with eigenvalues $0$, $\nu\neq 0$, $\nabla$ is the connection in $\mathcal{O}_X\oplus\mathcal{O}_X$ 
whose matrix relative to the standard basis is $-\big[\dfrac{A_0}{x}+\dfrac{A_1}{x-1}\big]\, dx$.
\begin{itemize}
\item[(i)]
If $m\in\mathbb{N}$ is an eigenvalue of $A_0+A_1$, then there exists a non-trivial morphism of flat bundles\\ $(\mathcal{O}_X,d)\to(\mathcal{O}_X\oplus\mathcal{O}_X,\nabla)$.
\item[(ii)]
If $-m\in\mathbb{N}$ is an eigenvalue of $A_0+A_1$, then there exists a non-trivial morphism of flat bundles $\big(\mathcal{O}_X,d-\mu\dfrac{dx}{x}-\nu\dfrac{dx}{x-1}\big)\to(\mathcal{O}_X\oplus\mathcal{O}_X,\nabla)$.
\end{itemize}
\end{theorem}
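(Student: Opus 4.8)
The plan is to reduce the construction of both morphisms to a statement about the monodromy representation, and then to settle that statement with Lemma~\ref{hypergeometricreducible} together with a global argument on $\mathbb{P}^1$. Since $\nabla$ is Fuchsian and $X=\mathbb{P}^1\backslash\{0,1,\infty\}$ is a Riemann surface, Deligne's equivalence (the monodromy equivalence quoted in Section~\ref{S:sheaf}) identifies, for any two such flat bundles $(\mathcal{E}',\nabla')$ and $(\mathcal{E},\nabla)$, the space of morphisms of flat bundles with the space of $\pi_1(X)$-module maps between their monodromies; a non-trivial morphism means a nonzero one on either side. Hence it suffices to produce the corresponding maps of local systems. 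Let $M$ and $N$ denote the monodromy matrices of $\nabla$ around $0$ and $1$, and recall that the loop around $\infty$ is conjugate to $(MN)^{-1}$. A morphism from $(\mathcal{O}_X,d)$ is exactly a nonzero $v$ with $Mv=v$ and $Nv=v$; while a morphism from $\big(\mathcal{O}_X,d-\mu\frac{dx}{x}-\nu\frac{dx}{x-1}\big)$, whose monodromy is the rank-one character sending the loops at $0$ and $1$ to the nontrivial eigenvalues of $M$ and $N$, is exactly a common eigenvector of $M,N$ for those nontrivial eigenvalues.

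Next I would read the eigenvalues off the residues. By the lemma expressing local monodromy eigenvalues as those of $\exp[2\pi i\,\mathrm{Res}_p\nabla]$, and since $A_0$ and $A_1$ each have $0$ as an eigenvalue, both $M$ and $N$ have $1$ as an eigenvalue. The residue at $\infty$ equals $A_0+A_1$ (that is, $-A_\infty$ with $A_\infty$ as in \eqref{E:Fuchsian}), whose prescribed eigenvalue is $\pm m\in\mathbb{Z}$ in either case, so $MN$ also has $\exp(\pm 2\pi i m)=1$ as an eigenvalue. Thus $1$ is a common eigenvalue of $M$, $N$ and $MN$, and, after excluding the degenerate parameter values so that each of $M,N,MN$ has distinct eigenvalues, Lemma~\ref{hypergeometricreducible} furnishes a common eigenvector $v$. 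Multiplying the eigenvalue relations $Mv=\alpha v$, $Nv=\beta v$, $MNv=\alpha\beta v$ and comparing $\alpha\beta$ with the two eigenvalues of $MN$ shows that only two assignments survive: either $Mv=v$ and $Nv=v$ (case A), or $v$ lies in the nontrivial eigenspaces of both $M$ and $N$ (case B). These are precisely the data of a morphism from $(\mathcal{O}_X,d)$ and from the Kummer bundle $\big(\mathcal{O}_X,d-\mu\frac{dx}{x}-\nu\frac{dx}{x-1}\big)$, respectively.

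It then remains to decide which case occurs, and this is where the sign of the integer eigenvalue of $A_0+A_1$ enters, through a global argument identical in spirit to the proof of Theorem~\ref{jacobi} (which, being purely monodromic, applies uniformly for every position of the apparent singularity $\lambda$). The horizontal section, or in case B the Kummer-twisted horizontal section, attached to $v$ has trivial monodromy and hence extends to a rational function on $\mathbb{P}^1$ whose local exponents at $0,1,\infty$ are dictated by the eigenvalues above; in particular its exponent at $\infty$ is the integer eigenvalue of $A_\infty=-(A_0+A_1)$. For part~(i), where $m\in\mathbb{N}$ is an eigenvalue of $A_0+A_1$ so this exponent is $-m\le 0$, case B would produce a nonzero rational function with a zero at $\infty$ but no pole anywhere, which is impossible; hence $v$ falls in case A, giving the morphism $(\mathcal{O}_X,d)\to(\mathcal{O}_X\oplus\mathcal{O}_X,\nabla)$. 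For part~(ii), where $-m\in\mathbb{N}$ so the exponent at $\infty$ is $m>0$, the symmetric argument rules out case A, forcing case B and the morphism from the Kummer bundle. Translating back through Deligne's equivalence yields the required non-trivial morphisms.

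The main obstacle I expect is this last step: Lemma~\ref{hypergeometricreducible} only guarantees \emph{some} common eigenvector, so deciding which of the two eigenvalue patterns is actually realized is not formal and genuinely relies on the non-existence of a nonzero rational function on $\mathbb{P}^1$ with a zero but no pole, read through the local exponents. Secondary care is needed to treat the degenerate cases separately (namely $m=0$, which yields a constant section, and parameter values making an eigenvalue of $M$, $N$, or $MN$ repeated), and to confirm that the monodromy of the Kummer source bundle matches exactly the nontrivial eigenvalue pair of $(M,N)$, so that case B corresponds precisely to the bundle named in (ii).
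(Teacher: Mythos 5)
Your argument is correct and rests on the same engine as the paper's, namely Lemma \ref{hypergeometricreducible} plus the ``rational function on $\mathbb{P}^1$ with a zero but no pole'' contradiction; the difference is purely in packaging. The paper's proof does not re-run the monodromy analysis: it cites Theorem \ref{jacobi} to get a polynomial solution of the scalar hypergeometric equation as the first component of a horizontal section, checks by inspection that the second component is also a polynomial, and then writes the morphism explicitly as multiplication by the resulting global horizontal section $f$ (the commutative diagram), with part (ii) ``proved similarly.'' You instead inline the monodromy argument at the level of the rank-two local system of $\nabla$ itself and convert the common eigenvector into a bundle morphism via Deligne's equivalence. What your route buys is that it applies verbatim to every position of the apparent singularity $\lambda$ (the paper has to remark separately that its argument ``works for an operator with the same monodromy''), and it produces both components of the morphism at once; what the paper's route buys is that it only uses the easy direction (a global horizontal section gives a morphism from the trivial or Kummer bundle by multiplication) rather than the full faithfulness of the monodromy functor. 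Both proofs share the same genericity caveat inherited from Lemma \ref{hypergeometricreducible} (distinct eigenvalues of $M$, $N$, $MN$), which you flag honestly. One bookkeeping slip worth fixing: in part (i) the exponent of the \emph{case-B twisted} section at $\infty$ is not the integer eigenvalue $-m$ of $A_\infty$ but that eigenvalue's partner $m-\mu-\nu$ shifted by $\mu+\nu$ from the twist $x^{-\mu}(x-1)^{-\nu}$, i.e.\ $+m$; it is this positive exponent, not $-m\le 0$, that yields the forbidden zero at $\infty$ and kills case B. The conclusion you draw is nevertheless the right one, and the symmetric computation in part (ii) is likewise correct.
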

\medskip

\begin{proof} Under the hypothesis of the first part of Theorem \ref{T:subbundle}, we see that the hypergeometric equation \eqref{E:hypergeometric} admits a global solution which, is a polynomial, by Theorem \ref{jacobi}, so is the first component of a $\nabla$ horizontal section. A straightforward inspection of the second component of the same   $\nabla-$horizontal section reveals that it is also a polynomial. Consequently, a non-trivial global $\nabla-$horizontal section $f$ exists.  The $\mathcal{O}_X$-linear map
$\mathcal{O}_X\to\mathcal{O}_X\oplus\mathcal{O}_X$ defined as multiplication by $f$ fits into the following  commutative diagram for each open set $U$
	\begin{equation*}
		\begin{array}{ccc}
			\mathcal{O}(U) &{d\atop \xrightarrow{\hspace*{.6cm}}}  &\Omega (U)\\
			\times f\left\downarrow\rule{0cm}{0.4cm}\right.\phantom{\omega}
			& &\phantom{\omega}\left\downarrow\rule{0cm}{0.4cm}\right.\times f\\
			\mathcal{O}(U)\oplus\mathcal{O}(U)& {\nabla \atop \xrightarrow{\hspace*{.6cm}}} & \Omega (U)\oplus
			\Omega (U)
		\end{array}.
    \end{equation*}
The second part of Theorem \ref{T:subbundle} can be proved similarly.
\end{proof}

Later in Section \ref{S:heunpolyn}, we will study the analogous phenomenon in the case of connections with four log singularities.

\subsection{Kummer symmetry: sheave theoretic interpretation}
In order to better illustrate our description of the Heun equation below, we revisit
 the symmetry of the solutions of  the hypergeometric equation (Kummer symmetry)  in a sheaf theoretic language. In particular this generates more criteria for special solutions other than those derived from Theorem \ref{jacobi}  and Theorem \ref{T:subbundle} in this subsection. Of course, the result is well-known in numerous classic texts, see e.g., \cite{Poole}.
 A more comprehensive investigation of symmetry would involve sympletomorphisms between moduli spaces of local systems, (see e.g. Oblezin \cite{Oblezin}) which is beyond the scope of this article.
 
  Loosely speaking,
the group of Kummer symmetry is generated by two parts: transformations of the independent variable and transformations of the dependent
variable.

Given a M\"obius transformation $f$ mapping $\{0,1,\infty\}$ into $\{0,1,\infty\}$,
and a local system $\mathcal{L}$ which is the
sheaf of horizontal sections of a connection in a rank-two bundle over
$\mathbb{P}^1\backslash\{0,1,\infty\}$ with log singularities at $0$, $1$ and
$\infty$, the push-forward $f_*\mathcal{L}$
is also one with
three log singularities at $0$, $1$, $\infty$. Thus, the group of such M\"obius transformations acts on the set of such local systems.

We also have the following actions on local systems
\[
\begin{array}{l}
S_0:\mathcal{L}\longmapsto\mathcal{L}\otimes\mathcal{K}_0^{-\gamma}\\
S_1:\mathcal{L}\longmapsto\mathcal{L}\otimes\mathcal{K}_1^{-\delta},
\end{array}
\]
where $A_0$ has eigenvalues $0$ and $\gamma$, $A_1$ has eigenvalues $0$ and $\delta$.
These actions take the sheaf of horizontal sections of a connection with three singularities $0$, $1$, $\infty$ to the sheaf of horizontal sections
of another connection with three singularities $0$, $1$, $\infty$. The actions $S_0$, $S_1$ generate a group which acts on the ``dependent variable''
of a hypergeometric equation.
These actions 
generate the classical Kummer symmetry group. They send local systems of
certain special types to other local systems of the same type. In other words,
special solutions of the Gauss hypergeometric equation are sent to the same
kind of special solutions via the Kummer symmetry.

We easily identify the Kummer symmetry from the discussion above in the following well-known theorem.
\begin{theorem}  The group generated by $S_0,\, S_1$ together with the push-forwards $f_\ast$  of those M\"obius transformations $f$ preserving $\{0,\, 1,\,\infty\}$ is isomorphic to the  group of signed permutations of three letters module $\mathbb{Z}_2$.
\end{theorem}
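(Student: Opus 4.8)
The plan is to read off how each generator acts on the discrete local invariants of a rank-two local system $\mathcal{L}$ on $\mathbb{P}^1\setminus\{0,1,\infty\}$ --- namely the unordered pair of exponents (equivalently, the signed exponent difference, once $0$ is normalised to be one of the two exponents) at each of the three punctures --- and then to identify the resulting transformation group with the hyperoctahedral group $B_3$ of signed permutations of three letters, quotiented by its central involution. First I would treat the two families of generators separately. The M\"obius maps $f$ preserving $\{0,1,\infty\}$ form a copy of $S_3$ (the anharmonic group), and since $f_\ast$ merely relabels the punctures it permutes the three exponent pairs; this realises the $S_3$ factor. For the dependent-variable generators, the Kummer-twist lemma gives $\mathrm{Res}_0(\mathcal{L}\otimes\mathcal{K}_0^{-\gamma})=\mathrm{Res}_0\mathcal{L}-\gamma I$, so the pair $\{0,\gamma\}$ at $0$ becomes $\{-\gamma,0\}$, whereas the exponent differences at $1$ and $\infty$ are preserved. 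Hence $S_0$ is exactly the sign change $\gamma\mapsto-\gamma$ of the exponent difference at $0$, and $S_1$ the sign change $\delta\mapsto-\delta$ at $1$.

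I would then assemble these into a group. It is immediate from the description above that $S_0^2=S_1^2=\mathrm{id}$ and $S_0S_1=S_1S_0$, so $\langle S_0,S_1\rangle\cong(\mathbb{Z}/2)^2$; moreover conjugating $S_0$ by the push-forward of the involution $x\mapsto 1/x$ (which interchanges $0$ and $\infty$) produces the remaining sign change $S_\infty$ of the exponent difference at $\infty$. Since the three sign changes are commuting involutions permuted by the $f_\ast$, there is a surjective homomorphism $B_3=(\mathbb{Z}/2)^3\rtimes S_3\to G$ onto the generated group $G$, sending the coordinate sign flips to $S_0,S_1,S_\infty$ and the permutations to the $f_\ast$. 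The decisive step --- and the one I expect to be the main obstacle --- is to compute the kernel of this surjection and show it is exactly the central $\mathbb{Z}_2=\langle -I\rangle$, so that $G\cong B_3/\mathbb{Z}_2$ rather than all of $B_3$.

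The kernel is controlled by the single relation $S_0S_1S_\infty=\mathrm{id}$, asserting that simultaneously reversing all three exponent differences acts trivially on local systems. To prove it I would carry out the composite carefully: each of $S_0,S_1,S_\infty$ not only flips one exponent difference but also uniformly shifts the exponents at the other two punctures, and one must track these shifts through the renormalisations and verify, using the Fuchsian relation, that the net effect is a rank-one twist returning a system isomorphic to $\mathcal{L}$ (equivalently, that $S_0S_1S_\infty$ is the passage to the dual $\mathcal{L}^\vee$, which is isomorphic to $\mathcal{L}$ after the trivial-Wro\'nskian normalisation of the preceding examples). It is precisely this shift-cancellation, rather than the bare sign flips, that effects the quotient by $\mathbb{Z}_2$; getting the bookkeeping right is the crux. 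Granting the relation, modulo $S_0S_1S_\infty=\mathrm{id}$ the sign group collapses to $\langle S_0,S_1\rangle\cong(\mathbb{Z}/2)^2$ with $S_\infty=S_0S_1$, and the $S_3$ of push-forwards permutes the three nontrivial elements $S_0,S_1,S_\infty$ as the full automorphism group of this Klein four-group, exhibiting $G$ as $(\mathbb{Z}/2)^2\rtimes S_3\cong S_4\cong B_3/\mathbb{Z}_2$. Finally I would confirm there are no further relations --- hence order exactly $24$ --- by displaying, for each of these $24$ classes, a local system with generic exponent differences on which it acts nontrivially.
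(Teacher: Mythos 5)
The paper offers no proof of this statement: it is asserted as ``well-known'' on the strength of the preceding discussion, with Poole cited for the classical background. So there is nothing of the paper's to compare against line by line; your write-up is a genuine fleshing-out of what the paper leaves implicit, and its architecture --- read off the action on the triple of exponent differences, realise $S_3$ via the anharmonic group, realise the sign changes at $0$ and $1$ via the Kummer-twist lemma, manufacture $S_\infty$ by conjugation, and identify the kernel of the surjection from $B_3$ with the central $\mathbb{Z}_2$ --- is the standard and correct way to organise the classical count of $24$; the identification $(\mathbb{Z}/2)^2\rtimes S_3\cong S_4\cong B_3/\{\pm I\}$ is also right.

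Two steps that you flag but defer are where all the real content sits, and you should be aware that the second is not mere bookkeeping. First, $S_\infty$ cannot literally be ``tensor with a Kummer sheaf supported at $\infty$'': a rank-one local system on $\mathbb{P}^1$ singular at one point only is trivial, so the conjugate $g_\ast S_0 g_\ast^{-1}$ (for $g(x)=1/x$) is necessarily a twist by some $\mathcal{K}_0^{s}\otimes\mathcal{K}_1^{t}$; moreover $S_0$ as defined in the paper applies only to systems normalised to have exponent $0$ at the origin, which $g_\ast\mathcal{L}$ is not, so the conjugation must be packaged with a renormalising twist before it even defines a transformation of the moduli space being acted on. Second, the central relation $S_0S_1S_\infty=\mathrm{id}$ is precisely the assertion that this composite twist is $(\wedge^2\mathcal{L})^{-1}$, i.e.\ that $S_0S_1\mathcal{L}\cong\mathcal{L}^\vee$ via the paper's Wro\'nskian computation $\wedge^2\mathcal{L}\cong\mathcal{K}_0^{\gamma}\otimes\mathcal{K}_1^{\delta}$, together with self-duality after the trivial-Wro\'nskian normalisation. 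Until that computation is carried out, the quotient by $\mathbb{Z}_2$ is unestablished and the generated group could a priori be all of $B_3$ of order $48$. These are gaps of execution rather than of conception --- the mechanism you name (duality plus the Fuchsian relation) is the right one --- but as written the proposal proves the theorem only modulo this deferred lemma.
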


In general, the construction of the Kummer symmetry group
generalises to local systems over $\mathbb{P}^1$ with more than three log
singularities. This generalisation is elaborated in \cite{Maier1} (or in \cite{CCT} when $\mathbb{P}^1$ is replaced by a complex torus with four log singularities).

We can obtain other special solutions analogous to those in Theorem \ref{jacobi} by applying the actions described  by the above symmetry group.  The result is summarised in the following well-known statement (see Poole \cite[\S23,  p. 90]{Poole}).

\begin{proposition}
If there exist $\lambda\in\{0,1-c\}$, $\mu\in\{0,c-a-b\}$ and $\nu\in\{a,b\}$ such that $\lambda+\mu+\nu\in\mathbb{Z}$, then the hypergeometric equation \eqref{E:hypergeometric} is reducible.
\end{proposition}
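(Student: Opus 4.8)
The plan is to interpret ``reducible'' exactly as in Section~\ref{S:sheaf}: the hypergeometric local system $\mathcal{L}$ (the sheaf of horizontal sections of the associated rank-two Fuchsian connection) is reducible iff its monodromy representation has a proper nontrivial invariant subspace, i.e.\ iff the generators $M$ and $N$ (hence $MN$) share a common eigenvector. The governing observation is that \emph{reducibility is an invariant of the Kummer symmetry group}: the push-forwards $f_\ast$ along M\"obius automorphisms of $\mathbb{P}^1\backslash\{0,1,\infty\}$ and the tensor actions $S_0,S_1$ (tensoring with a rank-one Kummer sheaf $\mathcal{K}^\mu_a$) are all \emph{invertible} endofunctors of the category of rank-two local systems on $X$. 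An invertible functor carries sub-objects to sub-objects bijectively, so it sends reducible systems to reducible ones and back. Hence it suffices to establish reducibility after moving $\mathcal{L}$ into a convenient position under this group.

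First I would collapse the hypothesis to a short list of normal forms. The numbers in the statement are precisely the local exponents: $\lambda\in\{0,1-c\}$ at $0$, $\mu\in\{0,c-a-b\}$ at $1$, and $\nu\in\{a,b\}$ at $\infty$. Running through the eight choices of $(\lambda,\mu,\nu)$ and simplifying $\lambda+\mu+\nu$ with the Fuchsian relation, the condition $\lambda+\mu+\nu\in\mathbb{Z}$ reduces to exactly four integrality conditions, namely $a\in\mathbb{Z}$, $b\in\mathbb{Z}$, $c-a\in\mathbb{Z}$, or $c-b\in\mathbb{Z}$. For example $(\lambda,\mu,\nu)=(1-c,c-a-b,a)$ yields $1-b\in\mathbb{Z}$, i.e.\ $b\in\mathbb{Z}$, while $(0,c-a-b,b)$ yields $c-a\in\mathbb{Z}$; the other six cases distribute among the same four conditions.

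The two conditions $a\in\mathbb{Z}$ and $b\in\mathbb{Z}$ are immediate from earlier work. If $a\in\mathbb{Z}$ then $a\in\mathbb{N}$ or $-a\in\mathbb{N}$, and Theorem~\ref{jacobi} produces an explicit solution (a polynomial, or $x^{1-c}(x-1)^{c-a-b}p(x)$) whose local exponents are pinned down at every singular point; such a solution spans a monodromy-invariant line, i.e.\ a rank-one sub-local-system, so $\mathcal{L}$ is reducible. The case $b\in\mathbb{Z}$ follows by the $a\leftrightarrow b$ symmetry of \eqref{E:hypergeometric}. For the remaining two conditions I would apply the action $S_0$, i.e.\ tensor $\mathcal{L}$ by $\mathcal{K}_0^{\,c-1}$ (the classical substitution multiplying solutions by $x^{c-1}$), which sends the parameters $(a,b,c)$ to $(a+1-c,\,b+1-c,\,2-c)$. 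Under this single move $c-a\in\mathbb{Z}$ becomes $a'=a+1-c\in\mathbb{Z}$ and $c-b\in\mathbb{Z}$ becomes $b'=b+1-c\in\mathbb{Z}$, both already settled above; and since the move is invertible, reducibility of the transformed system forces reducibility of the original $\mathcal{L}$. This disposes of all four normal forms and hence all eight cases.

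The hard part, as in the proof of Theorem~\ref{jacobi}, is the non-degeneracy hidden in Lemma~\ref{hypergeometricreducible}, whose hypothesis requires $M$, $N$ and $MN$ each to have \emph{distinct} eigenvalues. I therefore expect the main bookkeeping obstacle to be the resonant loci, where this fails, namely $c\in\mathbb{Z}$ (so $M$ is resonant), $c-a-b\in\mathbb{Z}$ (so $N$ is), or $a-b\in\mathbb{Z}$ (so $MN$ is). Here I would invoke the standing assumption that all residues are diagonalisable: a resonant diagonalisable residue exponentiates to a \emph{scalar} local monodromy, and a scalar generator is central, so any eigenvector of the remaining generator already spans an invariant line. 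Thus in precisely the cases where Lemma~\ref{hypergeometricreducible} does not apply, reducibility becomes automatic, and the argument closes. I expect this degenerate-case check, rather than the symmetry reduction itself, to require the most care.
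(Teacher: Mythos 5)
Your overall route is exactly the paper's: the paper offers no independent proof of this Proposition, stating only that it follows by ``applying the actions described by the above symmetry group'' to the special solutions of Theorem \ref{jacobi} (and citing Poole). Your reduction of the eight choices of $(\lambda,\mu,\nu)$ to the four integrality conditions $a\in\mathbb{Z}$, $b\in\mathbb{Z}$, $c-a\in\mathbb{Z}$, $c-b\in\mathbb{Z}$ is correct, as is the observation that $S_0$ (tensoring by $\mathcal{K}_0^{c-1}$, i.e.\ $(a,b,c)\mapsto(a+1-c,b+1-c,2-c)$) is an invertible functor carrying the last two conditions onto the first two while preserving reducibility. That is the intended argument.

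The one genuine flaw is in your treatment of the resonant loci. The claim that ``a resonant diagonalisable residue exponentiates to a scalar local monodromy'' is false: when the eigenvalues of $\mathrm{Res}_0\nabla$ are $0$ and $1-c$ with $1-c$ a nonzero integer, the residue is diagonalisable, but the local monodromy need not be conjugate to $\exp[2\pi i\,\mathrm{Res}_0\nabla]$ --- logarithmic solutions generically appear and $M$ is then a nontrivial unipotent Jordan block, not a scalar. (The paper's own lemma is careful to assert only that the \emph{eigenvalues} of the local monodromy are those of the exponential of the residue.) So a central generator is not available, and this fallback does not close the resonant cases. The repair is easier than the mechanism you propose: you do not need Lemma \ref{hypergeometricreducible} at all once you are in one of the normal forms, because the \emph{statement} of Theorem \ref{jacobi} (which carries no non-resonance hypothesis) already hands you an explicit solution --- a polynomial, or $x^{1-c}(x-1)^{c-a-b}p(x)$ --- whose span is visibly monodromy-invariant; reducibility follows directly from the existence of that solution. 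To be fair, the paper's own proof of Theorem \ref{jacobi} invokes Lemma \ref{hypergeometricreducible} without checking its distinct-eigenvalue hypotheses either, so you have at least localised a gap the paper leaves silent; but your proposed resolution of it, as written, is incorrect.
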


\section{Simultaneous diagonalisation} \label{S:heun}

We consider the connection \eqref{E:Heun_connection} over a rank-2 vector bundle with four regular singularities $\{0,\, 1,\, a,\, \infty\}$ in $\mathbb{P}^1$. Its matrix relative to a frame is
    \begin{equation}\label{E:heun-scheme}
		-\Big[\dfrac{A_0}{x}+\dfrac{A_1}{x-1}+\dfrac{A_a}{x-a}\Big]dx,
	\end{equation}
which we call a \textit{Heun-type connection} as in the Introduction.

In this setup, an appropriate choice of the accessory parameter $q$ means the simultaneous diagonalisation of two matrices amongst the residues at $\{0,\, 1,\, a,\, \infty\}$. So we denote
\bigskip

\begin{definition}\label{D:WGRM}
\begin{equation*}\label{G}\tag{\textbf{WGRM}}
	\Big\{ 
	\begin{array}{l}
\mbox{weak global reducible condition of monodromy:}\\
A_a\mbox{ and }A_{\infty}\mbox{ are simultaneously diagonalisable}.
\end{array}
	\Big\}
\end{equation*}
\end{definition}
\noindent for later applications.
\bigskip 

\subsection{Erd\'elyi's expansions revisited}
In order to illustrate under what circumstance in some classical consideration  of Fuchsian equations which are equivalent to the 
(\textbf{WGRM}) of Fuchsian connections defined above,
we review Erd\'elyi's infinite expansions in terms of hypergeometric functions,  which are used to study the monodromy group of the Heun equation
\begin{equation}\label{E:heun_2}
         \frac{d^2y}{dx^2}+
         \Big(\frac{\gamma}{x}+\frac{\delta}{x-1}+\frac{\epsilon}{x-a}\Big)\frac{dy}{dx}+
         \frac{\alpha\beta x-q}{x(x-1)(x-a)}y=0.
\end{equation}
Note that the parameters satisfy the Fuchsian constraint $\alpha+\beta-\gamma-\delta-\epsilon+1=0$ in \cite{Erdelyi1,Erdelyi2}.

The region of convergence of such infinite expansions 
can be used as a measure of the difference between the monodromy group of the hypergeometric equation and that of the Heun equation. For example, Erd\'{e}lyi \cite[(4.2)]{Erdelyi1} (1942) represented the local Heun function $Hl(a,q;\alpha,\beta,\gamma,\delta;x)$ (that is, the local analytic solution at the singularity $x=0$) by the hypergeometric function series $\displaystyle\sum_{m=0}^\infty X_m \varphi_m^1(x)$, where
	\[
		\varphi_m^1(x):=
\frac{\Gamma(\alpha-\delta+m+1)\Gamma(\beta-\delta+m+1)}
{\Gamma(\alpha+\beta-\delta+2m+1)}
x^m\cdot
\sideset{_2}{_1}{\operatorname{F}}\left({\begin{matrix}
                 \alpha+m,\beta+m\\
                  \alpha+\beta-\delta+2m+1
                 \end{matrix}};x\right)
	\]and the coefficients $X_m$ satisfy a three-term recursion \eqref{E:Three_term} given in the Appendix \ref{A:erdelyi}. If the \textit{accessory parameter} $q$ does not satisfy the \textit{infinite continued fraction} \eqref{E:cf}, then the infinite sum converges in the bounded region $\Omega_0$  defined in \eqref{E:omega_0} which contains $x=0$ in its interior but excludes $x=1$. If, however, that the accessory parameter $q$ \textit{satisfies} the {infinite continued fraction} \eqref{E:cf}, then the infinite sum converges in the larger region $\Omega_1$ defined in \eqref{E:omega_1} which equals to $\mathbb{C}$ with a branch cut from $1$ to $\infty$; see also the Remark after Theorem \ref{T:convergent}. 
A second infinite hypergeometric type solution $\displaystyle\sum_{m=0}^\infty X_m\varphi_m$ to the Heun equation linearly independent from the above infinite sum  $\displaystyle\sum_{m=0}^\infty X_m \varphi_m^1(x)$, where  each $\varphi_m$ can be \textit{any} linear combination of $\varphi_m^2,\cdots,\varphi_m^6$ defined in \cite[(4.2)]{Erdelyi1} was also derived by Erd\'elyi. If the coefficients $\{X_m\}$ defined by \eqref{E:Three_term} and the accessory parameter $q$
satisfies the  {infinite continued fraction} \eqref{E:cf}, then the infinite sum converges in the region ${\Omega}^-_1$ with non-empty intersection $\Omega_1$ defined in \eqref{E:omega_1}. When the accessory parameter $q$ satisfies the  {infinite continued fraction} \eqref{E:cf}, the region  $\Omega_1\cap {\Omega}^-_1$ contains $x=1,\, \infty$ in its interior but excludes $x=0$, the condition (\textbf{WGRM}) defined above (with $x=a$ replaced by $x=1$) describes precisely that both the hypergeometric expansions $\displaystyle\sum_{m=0}^\infty X_m\varphi^1_m$ and $\displaystyle\sum_{m=0}^\infty X_m\varphi_m$ converge simultaneously in the common region $\Omega_1\cap {\Omega}^-_1$ that contains $x=1,\,\infty$.
\bigskip

\begin{remark}
The corresponding series solutions to Darboux equations can be found in \cite{CCT2}.
\end{remark}
\bigskip

One may further ask what happens if the domain of convergence of such a series includes $0$, $1$ and $a$. If this is the case, {then} $A_0$, $A_1$ and $A_a$ (and hence $A_{\infty}=-A_0-A_1-A_a$) are simultaneously diagonalisable. Then there are two line bundles invariant
under the connection $\nabla$ of Heun type scheme \eqref{E:heun-scheme}. That is, this classical Heun operator from \eqref{E:heun} is factorized into two commuting first order operators, which is an
uninteresting circumstance from our viewpoint in this paper. In general, we have to settle if the  series (\ref{E:2F1h}) converges in a bigger domain including both $0$ and either $1$ or $a$.

However, if the parameters (i.e., local monodromies) of a Heun operator are special, we may ask if solutions of more special types exist. This is the study of the global properties of Heun-type
connections which we will carry out in the upcoming sections.

\section{Type I degeneration: One singularity becomes apparent} \label{S:eigenspaceI}
\label{apparentsingularity}

\subsection{Resolving singularities}
\begin{theorem} \label{term1}
If $\epsilon\in\N$ in equation \eqref{E:heun}, then there exists $q$ such that the series solution (\ref{E:2F1h}) terminates.
\end{theorem}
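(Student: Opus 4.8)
The plan is to translate the termination of the hypergeometric series (\ref{E:2F1h}) into a finite eigenvalue problem for the accessory parameter $q$, with the hypothesis $\epsilon\in\N$ supplying the single structural degeneracy that makes truncation possible. First I would substitute the ansatz (\ref{E:2F1h}) into the Heun operator $D$ of \eqref{E:heun} and use the hypergeometric differential equation together with the contiguous relations of the building blocks ${}_2F_1(\alpha+n,\beta+n;\alpha+\beta-\delta+2n+1;x)$ to collect the coefficient of each basis function. This produces a three-term recurrence of the shape
\[
	L_n\,\mathcal{C}_{n+1}+\bigl(M_n-q\bigr)\mathcal{C}_n+N_n\,\mathcal{C}_{n-1}=0,\qquad n\ge 0,\quad \mathcal{C}_{-1}=0,
\]
in which $L_n,M_n,N_n$ are explicit rational functions of $\alpha,\beta,\gamma,\delta,\epsilon,a$ and $n$, and only the diagonal term $M_n$ carries a summand linear in $q$ (since $q$ acts diagonally on the basis while $D$ acts tridiagonally). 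This is precisely the recurrence \eqref{E:Three_term} recorded in Appendix~\ref{A:erdelyi}.

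The key observation is that $\epsilon\in\N$ forces the \emph{backward} coefficient $N_n$ to vanish at a single index $n=n_0$ determined by $\epsilon$: the responsible factor is the one measuring the exponent difference $1-\epsilon$ of the residue $A_a$ at $x=a$, which degenerates to a non-positive integer exactly when $\epsilon$ is a positive integer. Granting $N_{n_0}=0$, the tail $\mathcal{C}_n=0$ for all $n\ge n_0$ becomes consistent with the recurrence: at level $n_0$ the relation reduces to $N_{n_0}\mathcal{C}_{n_0-1}=0$, which now holds automatically, while at every higher level all three terms already vanish. Hence any solution whose last nonzero entry is $\mathcal{C}_{n_0-1}$ is governed entirely by the finite block indexed by $n=0,1,\ldots,n_0-1$, subject to the two boundary conditions $\mathcal{C}_{-1}=0$ and $\mathcal{C}_{n_0}=0$.

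This finite block is an $n_0\times n_0$ tridiagonal matrix $T$ with diagonal $M_n$, superdiagonal $L_n$ and subdiagonal $N_n$, and a nontrivial terminating solution exists exactly when $q$ is an eigenvalue of $T$, i.e. a zero of the characteristic polynomial $\det(T-qI)$. Since this polynomial has degree $n_0\ge 1$ in $q$ with leading coefficient $(-1)^{n_0}\neq 0$, the fundamental theorem of algebra yields at least one such $q\in\C$ (in fact $n_0$ of them, counted with multiplicity); for any of these the series (\ref{E:2F1h}) collapses to the finite sum $\sum_{n=0}^{n_0-1}\mathcal{C}_n(q)\,x^n\,{}_2F_1(\cdots)$, which proves the claim. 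Conceptually this finite eigenvalue problem is the analytic shadow of resolving the singularity $x=a$ into an apparent one (cf. Theorem~\ref{T:resolving_sing} and Theorem~\ref{T:eigenvalues_a}).

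I expect the crux to lie in the second paragraph: pinning down the explicit recurrence coefficients and checking that $\epsilon\in\N$ produces a genuine vanishing of $N_{n_0}$ at an index $n_0\ge 1$, with the gate opening in the backward direction rather than leaving merely an isolated zero that a nonzero tail could step across. The sign and indexing conventions must be tracked carefully here, and one should confirm that the degeneracy lands on the subdiagonal, so that imposing $\mathcal{C}_{n_0}=0$ truly truncates the series, and not on the superdiagonal or diagonal. Once that structural fact is secured, the passage to the finite tridiagonal problem and the existence of $q$ are routine.
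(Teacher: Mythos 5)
Your proposal is correct and takes essentially the same approach as the paper: the paper's entire proof of Theorem~\ref{term1} is the single sentence that the result ``follows from the three-term recurrence relation given in \cite{Erdelyi1}'', and you have simply supplied the details of that argument (vanishing of the backward coefficient of the recurrence \eqref{E:Three_term}, followed by the finite tridiagonal eigenvalue problem determining $q$). The one caveat is exactly the point you flagged as the crux: the relevant factor in $K_{m+1}$ is $(\epsilon+m)$, so the backward coefficient vanishes at an admissible index only when $\epsilon$ is a non-positive integer ($\epsilon=-m$, which is the convention the paper itself uses in Section~\ref{S:TwoType} and in Takemura's theorem), not when $\epsilon\in\N$ as the theorem's hypothesis literally states --- an inconsistency in the paper's statement rather than a gap in your argument.
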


\begin{proof}
The proof follows from three-terms recurrence relation given in \cite{Erdelyi1}.
\end{proof}
\bigskip

The theorem suggests that with the special parameters $\epsilon,\, q$ mentioned in the theorem above, the monodromy of such a special Heun equation reduces to that of a
hypergeometric equation, and hence it has a local solution written in terms of the hypergeometric functions. This suggests that the singularity $x=a$ is \textit{removed}.

For the sake of convenience, we will name {the} additional condition on $A_a$ in \eqref{E:heun-scheme} which characterizes such a Heun-type connection:
\begin{definition}\label{D:WAS}
\begin{equation*}\tag{$\mathbf{WAS}(m)$}
	\Big\{
	\begin{array}{l}
\mbox{weak apparent singularity condition:}\\
0\mbox{ and }m\in\mathbb{N}\mbox{ are the eigenvalues of }A_a
\end{array}
	\Big\}
\end{equation*}
\end{definition}
The name ``weak apparent singularity condition" suggests that it is somewhat different from the ``apparent singularity condition" one usually sees in the theory of Fuchsian differential equations. The common notion of ``apparent singularity at $a$" means that a second order Fuchsian differential equation $Ly=0$ (I) has the difference of local exponents at $a$ being an integer; and (II) an accessory parameter is chosen appropriately so that the local monodromy of $L$ at $a$ is diagonalisable (i.e., only non-logarithmic local solutions as in \cite[pp. 69-70]{Poole}). Here we only focus on the criterion (I) and build a function space invariant under $L$ as mentioned in \eqref{E:eigenvalue_prob_1} and \eqref{E:eigenvalue_prob_2}. Since these function spaces depend on the domains chosen, we will 
glue them together in order to build a sheaf of $\mathcal{O}-$modules. Being invariant under $L$ would mean that this sheaf is equipped with a flat connection. The detail will be given in Theorem \ref{T:resolving_sing} below.

\medskip

We also recall that if a divisor on $\mathbb{P}^1$ is given by $D=m(a)$, then its divisor line bundle is denoted by $\mathcal{O}(m(a))$ and the space of global sections of $\mathcal{O}(m(a))$ is identified as
\[
	\mbox{span}\Big\{\dfrac{1}{(x-a)^{n}}:\ \ n=0,1,\cdots m\Big\}.
\]

The main result in this paper is the following theorem,
which says that if ($\mathbf{WAS}(m)$) is satisfied, then the Heun type connection can be interpreted as
one without any singular points away from $0$, $1$ and $\infty$, provided that the underlying vector bundle
is chosen carefully. Hence the flat bundle $(\mathcal{E},\nabla)$ in the theorem below has only three singular points. Thus, it behaves like a connection of hypergeometric type. We first deal with the case when $m\ge 1$ in this subsection before some applications in the next subsection. The general case where $m\in\mathbb{Z}$ and when the apparent singularity being any one of $\{0,\, 1,\, \infty\}$ other than $a$ will be given in the last subsection within this section.
\bigskip

\begin{theorem}[(Resolving singularity - version I)]
\label{T:resolving_sing}

Let $X=\mathbb{P}^1\backslash\{0,1,\infty\}$ and
$a\in X$, and
let $A_0,A_1,A_a$ be $2\times 2$ matrices with complex entries such that
$0$ and $m\in\mathbb{N}$ are the eigenvalues of $A_a$.
Then there exist a sheaf of locally free $\mathcal{O}_X$-module
$\mathcal{E}$, and a flat connection
$\nabla:\mathcal{E}\longrightarrow \mathcal{E}\otimes\Omega_X$ such that the connection matrix of $\nabla$ relative to a frame over an open subset of $X$ \emph{not containing} $a$ is given by
\[
-\Big[\dfrac{A_0}{x}+\dfrac{A_1}{x-1}+\dfrac{A_{a}}{x-{a}}\Big]\, dx.
\]
\end{theorem}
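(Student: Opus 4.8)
The plan is to realise $(\mathcal{E},\nabla)$ as the terminal object of a chain of $m$ Drinfeld--Oblezin lower modifications performed at the point $a$, applied to the naive Heun-type connection. First I would start on $Y:=X\backslash\{a\}=\mathbb{P}^1\backslash\{0,1,a,\infty\}$ with the trivial bundle $\mathcal{O}_Y^2$ carrying the connection $\nabla_0$ whose matrix in the standard frame is $-\big[\frac{A_0}{x}+\frac{A_1}{x-1}+\frac{A_a}{x-a}\big]\,dx$. By Definition~\ref{D:residue} this extends across $a$ to a flat connection with a log singularity at $a$ whose residue is $\mathrm{Res}_a\nabla_0=-A_a$. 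Since $A_a$ is diagonalisable with eigenvalues $0$ and $m$, the residue is diagonalisable with eigenvalues $0$ and $-m$, and the fiber at $a$ splits into the complementary invariant subspaces $V_0:=\mathrm{Ker}(A_a)=\mathrm{Ker}(\mathrm{Res}_a\nabla_0)$ and $V_{-m}:=\mathrm{Ker}(A_a-mI)$.

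The heart of the argument is to raise the eigenvalue $-m$ back to $0$ one unit at a time. I would apply the lower modification of Definition~\ref{D:modified_nabla} at the pair $(a,V_0)$: writing $\lambda=-m$, so that $\mathrm{Ker}(\mathrm{Res}_a\nabla_0-\lambda I)=V_{-m}$ is complementary to $W=V_0$, Theorem~\ref{T:complimentary1} gives that the modified connection $(\underline{\mathcal{E}},\underline{\nabla})$ has residue at $a$ with complementary invariant subspaces $\mathrm{Ker}(\mathrm{Res}_a\underline{\nabla}-(\lambda+1)I)$ and $W=V_0$; that is, the eigenvalue on the non-kernel part has moved from $-m$ to $-m+1$, while $V_0$ retains its eigenvalue $0$. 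Crucially, $W=V_0$ persists through each modification --- it is cut out by the sections $s_{k+1},\dots,s_n$ that remain sections of the modified sheaf in the proof of Theorem~\ref{T:complimentary1} --- so I can iterate, and after $j$ modifications the residue at $a$ has eigenvalues $0$ and $-(m-j)$. Taking $j=m$, the residue at $a$ is diagonalisable with both eigenvalues equal to $0$, hence is the zero endomorphism. Therefore the resulting connection is holomorphic at $a$, and $a$ becomes a genuine regular point: the singularity has been resolved.

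I would then collect the structural claims. Each modification of a locally free sheaf on a Riemann surface is again locally free by the Lemma following Definition~\ref{D:modification}, so the terminal sheaf $\mathcal{E}$ is locally free on all of $X$; flatness of $\nabla$ is automatic because $X$ is a Riemann surface and carries no holomorphic two-forms. Finally, each modification alters only the stalk at $a$ and is an isomorphism over $Y$, so $\mathcal{E}|_Y\cong\mathcal{O}_Y^2$ compatibly with the connections; hence over any open $U\subset X$ with $a\notin U$ the standard frame exhibits the matrix of $\nabla$ as the prescribed Heun matrix $-\big[\frac{A_0}{x}+\frac{A_1}{x-1}+\frac{A_a}{x-a}\big]\,dx$.

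The step I expect to require the most care is the bookkeeping of the eigenvalue shifts through the iterated construction: verifying that at every stage the pair $(a,V_0)$ still furnishes complementary invariant subspaces for the current residue (so that Theorem~\ref{T:complimentary1} applies afresh), and that the eigenvalue genuinely increases toward $0$ rather than drifting off, so that exactly $m$ lower modifications land on residue eigenvalues $\{0,0\}$. Equally delicate is confirming that \emph{diagonalisable with eigenvalues $0,0$} forces the residue to vanish identically, rather than merely giving trivial local monodromy --- this is precisely what upgrades the trivial-monodromy singularity at $a$ into an honest removable one and legitimises the target $\mathcal{E}\otimes\Omega_X$ with $\Omega_X$ holomorphic at $a$.
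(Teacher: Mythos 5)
Your strategy is not the paper's, and it contains a gap that I do not think can be closed. The paper never performs modifications on the singular Heun connection at all: it starts from the hypergeometric-type connection $\overline{\nabla}=d-\bigl[\frac{A_0+A_a}{x}+\frac{A_1}{x-1}\bigr]dx$, which is already regular at $a$, lets $\mathcal{F}$ be its sheaf of horizontal sections, perturbs by the rank-one term $\frac{a(A_as)}{x(x-a)}\,dx$ inside $\mathcal{F}\otimes\mathcal{O}(m(a))$, and takes $\mathcal{E}$ to be the $\nabla_v$-module generated by $\mathcal{F}$ with $v=x(x-1)\frac{d}{dx}$. The entire content of that proof is the verification that this module stays inside $\mathcal{F}\otimes\mathcal{O}(m(a))$, and the engine is the Cayley--Hamilton identity $A_a^2=mA_a$, which kills the leading Laurent coefficient of $(\nabla_v)^{m+1}s$. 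Nothing in your argument plays the role of this identity.

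The fatal step is the last one, which you yourself flag: after $m$ lower modifications the residue at $a$ has both eigenvalues equal to $0$, but it need not be diagonalisable, so you cannot conclude it vanishes. Theorem \ref{T:complimentary1} yields the complementary pair $\mathrm{Ker}(\mathrm{Res}-(\lambda+1)I)$ and $W$ only while $\lambda+1$ differs from the eigenvalue carried by $W$; at your final step both equal $0$, and a direct computation (already for $m=1$, in the frame $(x-a)s_1,\,s_2$ with $A_as_1=ms_1$, $A_as_2=0$) gives the new residue $\left(\begin{smallmatrix}0&*\\0&0\end{smallmatrix}\right)$, where $*$ is an entry of the holomorphic part of the connection matrix at $a$ --- a nonzero nilpotent for generic $A_0,A_1$. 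There is also a structural reason the strategy cannot be repaired: every modification is an isomorphism on a punctured neighbourhood of $a$, hence preserves the local monodromy of the Heun connection around $a$. Since the residue eigenvalues $0,-m$ are resonant, that monodromy is not $\exp(2\pi i\,\mathrm{Res}_a\nabla)$ but a unipotent matrix, nontrivial exactly when the classical logarithmic obstruction is nonzero (this is condition (II) in the paper's discussion of apparent singularities, which the hypothesis $(\mathbf{WAS}(m))$ deliberately does not impose); a connection holomorphic at $a$ has trivial monodromy there, so no chain of modifications of the naive Heun connection can remove the singularity in the logarithmic case. The paper's construction avoids this because, in the horizontal frame of $\overline{\nabla}$, the singular part at $a$ becomes $\frac{a}{x(x-a)}C(x)$ with $C(x)^2=mC(x)$, and such systems are log-free by construction. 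A smaller repair you would need in any case: the $0$-eigenspace of the residue does not literally persist through the modifications (the residue becomes block upper-triangular in the new frame), so at each stage one must modify at the \emph{current} kernel of the residue rather than at the original $V_0$.
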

\bigskip

\begin{proof}
Let $\mathcal{F}$ be the sheaf of horizontal sections of the connection $\overline{\nabla}$
in the trivial bundle $\mathcal{O}_X\oplus\mathcal{O}_X$ whose connection matrix relative to the
canonical basis is
	\[
		-\Big[\dfrac{A_0+A_{a}}{x}+\dfrac{A_1}{x-1}\Big]\,dx.
\]
Define a connection $\nabla$ in $\mathcal{F}\otimes\mathcal{O}(m(a))$ by
	\[
		\nabla(fs)=s\, df+\Big[\dfrac{a(A_{a}s)}{x(x-a)}\Big]f\, dx,
	\]
for every open set
$U\subset X\backslash\{a\}$, $s\in\mathcal{F}(U)$ and analytic function $f$
with divisor $\geq -m(a)$.

It is clear that $\nabla$ can possibly have a log singularity at
$a$. We will construct a subsheaf of $\mathcal{F}\otimes\mathcal{O}(m(a))$ such that the restriction of  $\nabla$ to it does not contain any singularities.

Let $v=x(x-1)\frac{d}{dx}$ be a fixed vector field and
	\[
		\mathcal{E}(U)=\mbox{the }\nabla_v-\mbox{module generated by }\mathcal{F}(U).
	\]

\noindent The issue here is to show that $\mathcal{E}(U)\subset \mathcal{F}\otimes\mathcal{O}(m)(U)$.
Observe that for each local section $s\in\mathcal{F}(U)$,
\begin{eqnarray*}
\nabla_vs
&=&\dfrac{ax(x-1)}{x(x-a)}(A_{a}s)\\
&=&\dfrac{a(a-1)}{x-a}(A_{a}s)+aA_{a}s.
\end{eqnarray*}
Thus,
	\[
		(x-a)\nabla_v s \in \mathrm{Image\ of\ }A_{a} \mod \mathfrak{m}_{a},
	\]
and hence by induction
\[
		(x-a)^m(\nabla_v)^m s \in \mathrm{Image\ of\ }A_{a} \mod \mathfrak{m}_{a},
	\]

Hence we write
\[
(\nabla_v)^{m} s=\dfrac{A_{a}t}{(x-a)^{m}}+\mbox{(lower order terms)},
\]
for some $t\in\mathcal{F}(U)$. 
Now,
\begin{eqnarray*}
(\nabla_v)^{m+1}s
&=&-m\dfrac{x(x-1)}{(x-a)^{m+1}}(A_{a}t)+a\dfrac{x-1}{(x-a)^{m+1}}(A^2_{a}t)+\mbox{(lower order terms)}\\
&=&-m\dfrac{a(a-1)}{(x-a)^{m+1}}(A_{a}t)+a\dfrac{a-1}{(x-a)^{m+1}}(A^2_{a}t)+\mbox{(lower order terms)}\\
&=&\dfrac{a(a-1)}{(x-a)^{m+1}}(A^2_{a}-mA_{a})t+\mbox{(lower order terms)}\\
&\in&\mathcal{F}\otimes\mathcal{O}(m)(U).
\end{eqnarray*}

\noindent

Therefore, the restriction of  $\nabla$ to  the subsheaf $\mathcal{E}$ of $\mathcal{F}\otimes\mathcal{O}(m(a))$  no longer contains the singularity $a$. It is now clear that the flat bundle $(\mathcal{E},\nabla)$ has the prescribed connection matrix relative to the canonical basis of $\mathcal{O}_X\oplus\mathcal{O}_X$.
\end{proof}
\bigskip

\subsection{Applications}\label{S:applications}

We first show how to derive Erd\'elyi's expansion \eqref{E:erdelyi} from the sheave of horizontal sections (local system) $\mathcal{F}$ of the connection
	\begin{equation}\label{E:representation_1942}
		\overline{\nabla}:=d-\Big[\dfrac{A_0+A_{a}}{x}+\dfrac{A_1}{x-1}\Big]\, dx
	\end{equation}
that was used in the proof of Theorem \ref{T:resolving_sing}. The derivation of expansion \eqref{E:erdelyi} from this construction will be given in Example \ref{Eg:1} below. The idea behind the construction \eqref{E:representation_1942} is to displace the monodromy information at the singularity $a$  to the origin $x=0$ while the $x=a$ is being resolved.
Then we explore how to obtain other expansions, including some new ones,  for solutions of the Heun equation \eqref{E:heun} by choosing different forms of $\overline{\nabla}$. In fact, we not only recover all the hypergeometric type expansion solutions derived by Erd\'elyi in \cite{Erdelyi1, Erdelyi2}, but also exhibit a new expansion \eqref{E:new-expansion} below, amongst a large number of possible hypergeometric expansions of local solutions for the Heun equation below from the general theory we propose here.
\bigskip

\begin{example} \label{Eg:1}
Consider the Fuchsian connection \eqref{E:representation_1942}
		\[
			B_0:=A_0+A_a=-A_1-A_\infty,
		\]
		where $B_0$ denotes the residue matrix of
		$\overline{\nabla}$ at $x=0$.
		\[
		\begin{split}
			\Tr(B_0)=\Tr(A_0+A_a) &=-\Tr(A_1)-\Tr(A_\infty)\\
			&=(\delta-1)-\alpha-\beta\\
			&=\delta-\alpha-\beta-1-n+n.\\
		\end{split}
		\]
		It follows from Lemma \ref{L:fuchsian_relation} (with $n=2$) that  $\Tr(B_0)$ differs from the sum of the two indicial roots at $x=0$ of the scalar equation satisfied by the first component by one. That is, we increase the $\delta-\alpha-\beta-1-n$  by one when it is written in the classical Riemann scheme. The analysis implies that we have formal sum of schemes
		\begin{equation}\label{E:sum_1942pm}
		\sum_n c_n\, P
			\left(
			\begin{matrix}
			0                     & 1        & \infty\\
			\, n                     & 0        & \alpha \\
			\delta-\alpha-\beta- n & 1-\delta & \beta
			\end{matrix}
			;\ x
			\right)
	\end{equation} which recovers the scheme given in \cite{Erdelyi1}. One can derive the formal sum
		\begin{equation}\label{E:erdelyi_pm}
		\sum_{n=0}^\infty \mathcal{C}_n\,
x^{n}
\sideset{_2}{_1}{\operatorname{F}}\left({\begin{matrix}
                 \alpha+n,\ \beta+n\\
                  \alpha+\beta-\delta+ 2n+1
                 \end{matrix}};x\right)
		\end{equation}
which is precisely the infinite sum mentioned in \eqref{E:erdelyi}, where the coefficients $\mathcal{C}_n$ satisfy a certain three-term recursion \cite[(5.3), (5.4)]{Erdelyi1} (also see Appendix~\ref{A:erdelyi}).
\end{example}	
\bigskip

\begin{example}[\cite{Erdelyi2}]\label{E:bad_infinty} Consider the sheave of horizontal sections (local system) $\mathcal{F}$ of the connection 
	\begin{equation}\label{E:representation_1944}
\overline{\nabla}=d-\Big[\dfrac{A_0}{x}+\dfrac{A_1}{x-1}\Big]dx,
	\end{equation}
from which one defines
		\[
			\nabla(fs)=s\, df+\Big[\dfrac{A_{a}s}{x-a}\Big]f\, dx,
		\]
	for every open set $U\subset X\backslash\{a\}$, $s\in\mathcal{F}(U)$ and analytic function $f$
with divisor $\geq -m(a)$.
Then
		\[
			{B}_\infty=A_a+A_\infty=-A_0-A_1
		\]
		is the residue matrix of $\overline{\nabla}$ at $\infty$.
Thus
	\[
		\begin{split}
			\Tr({B}_\infty)=\Tr(A_a+A_\infty)
			&:=\lambda+\mu\\
			&=(\lambda+n)+(\mu-n),\
		\end{split}
	\]where $\lambda,\, \mu$ are the two exponents for the corresponding hypergeometric connection at the singularity $x=\infty$ and $n\in \mathbb{Z}$ is arbitrary (see Theorem \ref{T:resolving_sing}). This approach would give rise to the expansion that was obtained  by Erd\'elyi in another paper \cite{Erdelyi2} in 1944. That is,
	\begin{equation}\label{E:sum_1944}
		\sum_n c_n\, P
			\left\{
			\begin{matrix}
			0                     & 1        & \infty\\
			0                     & 0        & \lambda+m \\
			1-\gamma & 1-\delta   & \mu-m
			\end{matrix}
			;\ x
			\right\},
	\end{equation} 
which gives the expansion \eqref{E:erdelyi_2} for the local solution at the origin with exponent 0.	
We note that the adjustment  of the Fuchsian relations of transition from a connection form to scalar differential equation form as described in Lemma \ref{L:fuchsian_relation} is implicitly absorbed in the choices of the notations $\lambda,\, \mu$ above already.
 \end{example}
\bigskip

\begin{example}[\cite{STW}]\label{E:theorists} Shiga, Tsutsui and Wolfart considered the transcendence of Schwarz maps at algebraic arguments from Fuchsian equations with the same monodromy of a hypergeometric equation with several apparent singularities. They showed that their differential equation admits a holomorphic solution $f$ at $x=0$ normalised with $f(0)=1$ can be expressed in the form 
	\begin{equation}\label{E:STW}
		f(x)=c_0(t)
		\sideset{_2}{_1}{\operatorname{F}}\left({\begin{matrix}
                 \mu^\prime,\ \mu^{\prime\prime}\\
                 1-m-\nu_0
                 \end{matrix}};\, x\right)
                 +\cdots+
                 c_m(t)
		\sideset{_2}{_1}{\operatorname{F}}\left({\begin{matrix}
                 \mu^\prime,\ \mu^{\prime\prime}\\
                 1-\nu_0
                 \end{matrix}};\, x\right)
	\end{equation}
such that $\sum_{k=0}^mc_k(t)=1$, where $t$ is determined by the locations of the apparent singularities. We note that while the locations of the apparent singularities are considered movable in \cite{STW}, the locations of our consideration here are fixed. 
In \cite{STW}, the Fuchsian equation has $m$ apparent singularities  where the exponent difference at each of these singularities is $2$. However, we can show directly that the \eqref{E:STW} can also be given from our \eqref{E:heun} where the exponent difference at the apparent singularity $x=a$ is the integer $m$, i.e., when (\textbf{WAS}$(m)$) holds. Application of the contiguous relation
	\[
		(c-a-1){}_2F_1(a,\, b;\, c;\, x)+a{}_2F_1(a+1,\, b;\, c;\, x)-(c-1){}_2F_1(a,\, b;\, c-1;\, x)=0
	\]
repeatedly to each term of \eqref{E:STW} yields
	\begin{equation}\label{E:STW2}
		f(x)=d_0(t)
		\sideset{_2}{_1}{\operatorname{F}}\left({\begin{matrix}
                 \mu^\prime+m,\ \mu^{\prime\prime}\\
                 1-\nu_0
                 \end{matrix}};\, x\right)
                 +\cdots+
                 d_m(t)
		\sideset{_2}{_1}{\operatorname{F}}\left({\begin{matrix}
                 \mu^\prime,\ \mu^{\prime\prime}\\
                 1-\nu_0
                 \end{matrix}};\, x\right)
	\end{equation}
	where $\sum_{k=0}^m d_k(t)=1$. A further application of the contiguous relation	
	\[
		(b-a){}_2F_1(a,\, b;\, c;\, x)+a{}_2F_1(a+1,\, b;\, c;\, x)-b{}_2F_1(a,\, b+1;\, c;\, x)=0
	\]
to \eqref{E:STW2} repeatedly yields
	 	\[
		\sum_{k=0}^m e_k(t)\,
\sideset{_2}{_1}{\operatorname{F}}\left({\begin{matrix}
                 \mu^\prime+k,\ \mu^{\prime\prime}-k\\
                  1-\nu_0
                 \end{matrix}};x\right),
          \]
which is a terminated form of \eqref{E:erdelyi_2} where $\lambda=\mu^\prime$, $\mu=\mu^{\prime\prime}+m$. 
\end{example}
\bigskip

 \begin{example} Suppose next that $\mathcal{F}$ is the sheave of horizontal sections (local system) of the connection 
	\begin{equation}\label{E:nabla-new}
		\overline{\nabla}=d-\Big[\dfrac{A_0}{x}+\dfrac{A_1+A_a}{x-1}\Big]dx,
	\end{equation}
from which another connection $\nabla$ is defined by
		\[
		\nabla(fs) := s\, df+\Big[\dfrac{(1-a)A_{a}s}{(x-1)(x-a)}\Big]f\, dx,
	\]
for every open set $U\subset X\backslash\{a\}$, $s\in\mathcal{F}(U)$ and analytic function $f$
with divisor $\geq -m(a)$. Then
		\[
			{B}_1=A_a+A_1=-A_0-A_\infty
		\]
		is the residue matrix of  $\overline{\nabla}$ at $x=1$. Hence 
	\[
		\begin{split}
			\Tr({B}_1)&=\Tr(A_1+A_a) =-\Tr(A_0)-\Tr(A_\infty)\\
			&= (\gamma-1)-\alpha-\beta\\
			&=(\gamma-1-\alpha-\beta-n)+n.\\
		\end{split}
	\]  Similar to the consideration in Example 5.4 that the indicial root $\gamma-\alpha-\beta-n$ of the corresponding classical hypergeometric equation is one bigger than the eigenvalue $\gamma-1-\alpha-\beta-n$ that appears above. Hence this  gives rise to 	\begin{equation}\label{E:sum_new}
		\sum_n c_n\, P
			\left\{
			\begin{matrix}
			0                     & 1                                    & \infty\\
			0                     & n                             & \alpha \\
			1-\gamma      & \gamma-\alpha-\beta- n & \beta
			\end{matrix}
			;\ x
			\right\},
	\end{equation}which contains expansions that were \textit{neither found in} \cite{Erdelyi1} \textit{nor in} \cite{Erdelyi2}.
 Thus, it is possible to obtain
	\begin{equation}\label{E:new-expansion}
		\sum_{n=0}^\infty \mathcal{C}_n\,
(x-1)^{n}
\sideset{_2}{_1}{\operatorname{F}}\left({\begin{matrix}
                 \alpha+ n,\ \beta+ n\\
                  \gamma
                 \end{matrix}};\, x\right)
		\end{equation}
where the coefficients $\mathcal{C}_n$ would satisfy a certain three-term recursion which we omit.
\end{example}
\bigskip

\begin{example}
	Indeed, by choosing
	\begin{equation}\label{E:nabla-3}
		\overline{\nabla^t}=d-\Big[\dfrac{A_0+tA_a}{x}+\dfrac{(1-t)A_a+A_1}{x-1}\Big]dx.
	\end{equation}
	Let $\mathcal{F}$ be the sheave of horizontal sections of $\overline{\nabla^t}$. Then one defines
	\[
		\nabla(fs):=s\, df+\Big[
		\frac{tA_a}{x}+	\dfrac{(t-1)A_{a}}{x-1}-\frac{A_a}{x-a}\Big]sf\, dx,
	\]
for every open set 
$U\subset X\backslash\{a\}$, $s\in\mathcal{F}(U)$ and analytic function $f$
with divisor $\geq -m(a)$. We see that the residues of $\overline{\nabla^t}$ at $x=0,\, 1$ are given by
	\[
		{B}_t:=(A_0+tA_a), \quad {C}_{t}:= ((1-t)A_a+A_1).
	\]
respectively, so that 
	\[
		{B}_t+{C}_{t}+A_\infty=0
	\]
is the Fuchsian relation for  $\overline{\nabla^t}$. Since the eigenvalues of $B_t,\ C_t$ are unknown, one cannot obtain Erd\'elyi type hypergeometric expansions from it in general. 		Notice that we recover the matrix representation \eqref{E:representation_1944} when $t=0$ and the representation \eqref{E:representation_1942} when $t=1$. 
	\end{example}
\medskip

\begin{remark}
 One observes from the discussion of above examples that  essentially all possible Erd\'elyi type hypergeometric expansions can be obtained this way.
\end{remark}

\subsection{Accessory parameters and invariant spaces}

In order to study the eigenvalue problems \eqref{E:eigenvalue_prob_1} and \eqref{E:eigenvalue_prob_2}, a $\nabla_v-$invariant function space is needed. 

As in the proof of Theorem \ref{T:resolving_sing}, let $\mathcal{F}$ be the sheaf of horizontal sections of a connection ${\nabla}$ with singularities $\{0,\, 1,\, \infty\}$ (we are allowed to choose different $\mathcal{F}$ as shown in the examples  in \S\ref{S:applications}).  Then given an open set $U$ and a section $s\in\mathcal{F}(U)$, either one of the spans 
	\[
		\mbox{span}\big\{A_as,\nabla_v A_as,(\nabla_v)^2 A_as,\cdots,(\nabla_v)^m{A_as}\big\},
	\]
or	\[
		\mbox{span}\Big\{A_as,\dfrac{A_as}{x-a},\dfrac{A_as}{(x-a)^2},\cdots,\dfrac{A_as}{(x-a)^m}\Big\},
	\]
is invariant under $\nabla_v$. So the eigenvalue problem \eqref{E:eigenvalue_prob_2} is well-defined. The matrix of $\nabla_v$ relative to the former basis is in rational canonical form, while the latter basis gives a
tridiagonal matrix. We choose to study our eigenvalue problem using the second basis. Let us illustrate our viewpoint in the following lower dimensional examples. In particular, we notice that the eigenvalues of these $\nabla_v$ are all integers.
\bigskip

\begin{example} \label{Eg1}
Given $m\in\N$, $k=1,2,\cdots,m$, we have
$A_a^2=mA_a$ and hence
	\[\nabla_v\left({A_as}\right)=\frac{ma(x-1)A_as}{x-a}=ma(a-1)\frac{A_as}{x-a}+ma\, A_as\]
and
\begin{eqnarray*}\nabla_v\left(\frac{A_as}{(x-a)^k}\right)&=&-\frac{kx(x-1)A_as}{(x-a)^{k+1}}+\frac{ma(a-1)A_as}{(x-a)^{k+1}}+\frac{maA_as}{(x-a)^k}\\
&=&a(a-1)(m-k)\frac{A_as}{(x-a)^{k+1}}+[(m-2k)a+k]\frac{A_as}{(x-a)^k}-k\frac{A_as}{(x-a)^{k-1}}.
\end{eqnarray*}
When $m=1$, the matrix representation of $\nabla_v$ relative to
		\[
			\Big\{A_as,\dfrac{A_as}{x-a}\Big\}
		\]
is
\[\begin{bmatrix}
a&-1\\
a(a-1)&1-a
\end{bmatrix},\]
and its eigenvalues are $0$ and $1$.
When $m=2$, the matrix representation of $\nabla_v$ relative to 	
	\[
		\Big\{A_as,\dfrac{A_as}{x-a},\dfrac{A_as}{(x-a)^2}\Big\}
	\]
is
	\[\begin{bmatrix}
		2a&-1&0\\
		2a(a-1)&1&-2\\
		0&a(a-1)&2-2a
	\end{bmatrix},\]
and its eigenvalues are $0$, $1$ and $2$. In general, for any $m\in\N$, the eigenvalues of $\nabla_v$
are $0,1,\cdots,m$.
\end{example}
\bigskip

The phenomenon of integral-valued (or equally-spaced)  eigenvalues for $\nabla_v$ exhibited in the Example \ref{Eg1} above can be explained by using the technique of bundle modifications as defined in Definition \ref{D:modification} (see \cite{Oblezin}).  {We summarise the observation in the following theorem.}
\bigskip

\begin{theorem}\label{T:eigenvalues_a}
Let $(\mathcal{E},\nabla)$
be a Heun type-connection such that the residue of $\nabla$ at the singular point $a$ has eigenvalues $0$, $-m$ for some $m\in\mathbb{N}$, , i.e., $(\mathbf{WAS}(m))$. Then there exists a vector field $w$ such that for each sufficiently small open set ${U}$, the set of eigenvalues of $\nabla_w:\ \mathcal{E}(U)\to \mathcal{E}(U)$ is given by $\{0,\, 1,\, 2,\, \cdots, m\}$, i.e.,  there exist $m+1$ local sections ${s}_0,{s}_1,\cdots,\, {s}_m\in\mathcal{E}(U)$ such that
	\[\nabla_w s_k=k\, {s}_k\]
for all $k\in \{0,\, 1,\, \cdots, m\}$.
\end{theorem}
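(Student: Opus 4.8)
The plan is to take $w=v=x(x-1)\frac{d}{dx}$, the vector field already used in Example \ref{Eg1}, and to reduce the eigenvalue problem $\nabla_w s=\lambda s$ to a single scalar first-order ODE whose admissible solutions turn out to be indexed exactly by $\lambda\in\{0,1,\dots,m\}$. This will simultaneously reprove the small cases $m=1,2$ computed in Example \ref{Eg1} and recast the result in the modification language announced in the introduction.

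First I would localise to the finite-dimensional $\nabla_v$-invariant space. Since $(\mathbf{WAS}(m))$ gives $A_a^2=mA_a$, I fix a local section $s$ of the hypergeometric-type sheaf $\mathcal{F}$ from the proof of Theorem \ref{T:resolving_sing} with $A_as\neq 0$ and set $u:=A_as$. Then $V^s_U:=\mathrm{span}\{u,(x-a)^{-1}u,\dots,(x-a)^{-m}u\}$ is precisely the $(m+1)$-dimensional invariant subspace of Example \ref{Eg1}. Every element is $u\,g$ with $g$ a polynomial in $(x-a)^{-1}$ of degree $\le m$, and the Leibniz rule together with $\nabla_v u=\frac{ma(x-1)}{x-a}u$ gives $\nabla_v(u\,g)=u\,\widetilde{L}g$, where $\widetilde{L}g=x(x-1)g'+\frac{ma(x-1)}{x-a}g$. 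Thus under $u\,g\leftrightarrow g$ the operator $\nabla_v$ on $V^s_U$ is identified with the scalar operator $\widetilde{L}$, and the eigenvalue problem becomes $\widetilde{L}g=\lambda g$.

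Next I would solve this ODE explicitly. Integrating the logarithmic derivative, $\frac{g'}{g}=\frac{m-\lambda}{x}+\frac{\lambda}{x-1}-\frac{m}{x-a}$, so $g=C\,x^{m-\lambda}(x-1)^{\lambda}(x-a)^{-m}$. The decisive step is that $u\,g$ must be a genuine section of the modified sheaf $\mathcal{E}$, i.e.\ single-valued with its only pole (of order $\le m$) at $a$; equivalently $g(x-a)^m=C\,x^{m-\lambda}(x-1)^{\lambda}$ must be a polynomial of degree $\le m$. This holds precisely when $m-\lambda$ and $\lambda$ are non-negative integers, that is $\lambda\in\{0,1,\dots,m\}$, producing the $m+1$ eigensections $s_k=A_as\cdot x^{m-k}(x-1)^{k}(x-a)^{-m}$ with $\nabla_v s_k=k\,s_k$.

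Finally I would recast this as the promised triangulation. The twisting factor $x^{m-\lambda}(x-1)^{\lambda}$ records Kummer shifts $\mathcal{K}_0^{m-\lambda}\otimes\mathcal{K}_1^{\lambda}$, so the integers $m-\lambda$ and $\lambda$ are exactly the residue increments at $0$ and $1$ realisable by the Drinfeld--Oblezin modifications of Theorems \ref{T:complimentary1} and \ref{T:complimentary2}: passing from eigenvalue $k$ to $k+1$ is one upper modification at $0$ followed by one lower modification at $1$, assembling the injective chain $(\mathcal{E}_m,\nabla_m)\to\cdots\to(\mathcal{E},\nabla)$ whose successive steps read off the eigenvalues $0,1,\dots,m$. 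I expect the main obstacle to be the \emph{integrality / single-valuedness step}: a priori $\widetilde{L}g=\lambda g$ is solvable on a small $U$ for \emph{every} $\lambda\in\mathbb{C}$, and it is only the requirement that the solution extend to an honest (single-valued, pole-order-$\le m$) section of the modified bundle $\mathcal{E}$---equivalently that it lie in $V^s_U$---which collapses this continuum of formal eigenvalues to the integers $0,\dots,m$. Pinning down this constraint cleanly, and matching its two exponents to the residue bookkeeping of the modification chain (keeping track of the sign arising from the minus sign in the connection matrix and from $v$ vanishing at $0,1$ but not at $a$), is the crux of the argument.
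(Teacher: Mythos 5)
Your argument is correct, but it is not the proof the paper gives; the two differ both in the choice of vector field and in the mechanism that forces integrality. The paper takes $w=-(x-a)\frac{d}{dx}$ and never solves a differential equation: it builds the chain of lower modifications $(\mathcal{E}_0,\nabla_0)\to\cdots\to(\mathcal{E}_m,\nabla_m)$ entirely at the point $a$ (each step shifting the nonzero eigenvalue of the residue at $a$ by one, via Theorem \ref{T:complimentary1}), and then invokes Cayley--Hamilton, $\mathrm{Res}_a\nabla_k(\mathrm{Res}_a\nabla_k+kI)=0$, to get $(\nabla_k)_w s\equiv ks$ modulo the image of $\mathcal{E}_{k-1}$, so that the matrix of $(\nabla_m)_w$ is triangular with diagonal $0,1,\dots,m$. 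You instead keep $v=x(x-1)\frac{d}{dx}$ from Example \ref{Eg1}, identify $\nabla_v$ on the $(m+1)$-dimensional invariant space with the scalar operator $\widetilde{L}$, and integrate $\widetilde{L}g=\lambda g$ in closed form; your formula $g=Cx^{m-\lambda}(x-1)^{\lambda}(x-a)^{-m}$ is right (for $m=1$ it reproduces the eigenvectors $(1,a)^{T}$ and $(1,a-1)^{T}$ of the matrix in Example \ref{Eg1}), and the requirement that $g(x-a)^{m}=Cx^{m-\lambda}(x-1)^{\lambda}$ be a polynomial of degree at most $m$ does pin $\lambda$ to $\{0,\dots,m\}$. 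What your route buys is slightly more than the paper's: explicit eigensections $s_k=A_as\cdot x^{m-k}(x-1)^{k}(x-a)^{-m}$ and hence diagonalisability of $\nabla_v$ on the invariant space, not merely triangularity. What it costs is structure: it leans on the identity $A_a^2=mA_a$ and an exactly solvable first-order equation, whereas the modification chain is the geometric device the introduction advertises and which the paper's proof is built to showcase. Two caveats. First, your step $\nabla_v(A_as)=\frac{ma(x-1)}{x-a}A_as$ is inherited verbatim from Example \ref{Eg1}, so you are no worse off than the paper on that point, but you should say explicitly that you are restricting $\nabla_v$ to the finite-dimensional invariant subspace, since on all of $\mathcal{E}(U)$ the eigenvalue statement only makes sense in that restricted form. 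Second, your closing ``recasting'' as alternating upper and lower modifications at $0$ and $1$ does not match the paper's filtration, which consists solely of lower modifications at $a$; since your core argument does not use this recasting it is harmless, but as written it is a sketch rather than a proof and should either be made precise or dropped.
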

\medskip

\begin{proof}
 Let $(\mathcal{E}_m,\nabla_m)$
be a connection of Heun type such that Res$_a\nabla$ has eigenvalues $0$, $-m\in-\N$.
Let its modification at $(a,\, $Ker(Res$_a\nabla))$ be
$(\mathcal{E}_{m-1},\nabla_{m-1})$. In particular, there exists a subsheaf
$(\mathcal{E}_{m-1},\nabla_{m-1})\to(\mathcal{E}_m,\nabla_m)$ such that Res$_a\nabla_{m-1}$ has eigenvalues
$0$, $-m+1$ by Theorem \ref{T:complimentary1}. 
Repeating this procedure yields a filtration of $(\mathcal{E}_m,\nabla_m)$:
\[
(\mathcal{E}_0,\nabla_0)\longrightarrow
\cdots \longrightarrow (\mathcal{E}_{m-1},\nabla_{m-1}) \longrightarrow (\mathcal{E}_m,\nabla_m).
\]
Now if $s$ is a local section of $\mathcal{E}_k$, then
\begin{eqnarray*}
\nabla_ks
&=&(\mbox{Res}_a\nabla_k)s\otimes \dfrac{dx}{x-a}\hspace{1cm}\mbox{mod }\Omega_a\\
&=&(\mbox{Res}_a\nabla_k+kI)s\otimes \dfrac{dx}{x-a}-ks\otimes \dfrac{dx}{x-a}\hspace{1cm}\mbox{mod }\Omega_a\\
&=& -ks\otimes\dfrac{dx}{x-a}\hspace{1cm}\mbox{mod (image(}\mathcal{E}_{k-1},\nabla_{k-1})\longrightarrow(\mathcal{E}_k,\nabla_k))\otimes\Omega_a
\end{eqnarray*}
{because of $\mathrm{Res}_a\nabla_k (\mathrm{Res}_a\nabla_k +kI)=0$ by Cayley-Hamilton}. 
Thus if $w=-(x-a)\dfrac{d}{dx}$, then
\[
(\nabla_{k})_ws=ks\hspace{1cm}\mbox{mod \big(image(}\mathcal{E}_{k-1},\nabla_{k-1}\big)\longrightarrow(\mathcal{E}_k,\nabla_k)).
\]
In other words, the matrix representation of $(\nabla_m)_w$ is triangular with diagonal entries $0$, $1,\cdots,m$. This implies that
its eigenvalues are $0$, $1$, $\cdots$, $m$.
\end{proof}

\subsection{Eigenspaces via symmetry} \label{EigenSym}

In Theorem \ref{T:resolving_sing}, we have seen that if
$A_a$ has eigenvalues $0$ and $m\in\mathbb{N}$, then there is a flat connection $(\mathcal{E},\nabla)$ over $X=\mathbb{P}^1\backslash\{0,1,\infty\}$ whose connection matrix is of the prescribed form
\[
-\big[\dfrac{A_0}{x}+\dfrac{A_1}{x-1}+\dfrac{A_a}{x-a}\big]\, dx.
\]
The following theorem shows that the condition $m\in\mathbb{N}$ and the choice of $A_a$ can be modified.

\begin{theorem}[(Resolving singularity - version II)]
\label{T:resolving_sing_b}
Let $X=\mathbb{P}^1\backslash\{0,1,\infty\}$ and
$a\in X$, and
let $A_0,A_1,A_a$ be $2\times 2$ matrices with complex entries such that
$0$ and $m\in\mathbb{Z}$ are the eigenvalues of $A_a$.
Then there exist a sheaf of locally free $\mathcal{O}_X$-module
$\mathcal{E}$, and a flat connection
$\nabla:\mathcal{E}\longrightarrow \mathcal{E}\otimes\Omega_X$ such that the connection matrix of $\nabla$ relative to a frame over an open subset of $X$ \emph{not containing} $a$ is given by
	\begin{equation}\label{E:nabla_2}
		-\Big[\dfrac{A_0}{x}+\dfrac{A_1}{x-1}+\dfrac{A_{a}}{x-{a}}\Big]\, dx.
	\end{equation}
\end{theorem}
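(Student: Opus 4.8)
The plan is to deduce the statement from Theorem \ref{T:resolving_sing} by realising the resolution as an iterated Drinfeld--Oblezin modification, and to use the duality between upper and lower modifications (Theorems \ref{T:complimentary1} and \ref{T:complimentary2}, together with Lemma \ref{T:upperlower}) to absorb the sign of $m$. The key structural observation is that a modification at $(a,W)$ alters only the stalk at $a$ (Definition \ref{D:modification}) and leaves $\nabla$ untouched over $X\backslash\{a\}$; hence the flat bundle it produces automatically agrees with the prescribed Heun-type matrix \eqref{E:nabla_2} on every open set $U\not\ni a$, relative to the canonical frame. Consequently the whole problem reduces to arranging that the residue at $a$ becomes trivial, so that $a$ is turned into a genuine regular point, while keeping the sheaf locally free (which is guaranteed by the lemma that modifications preserve local freeness).

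First I would dispose of $m=0$: under the standing hypothesis that every residue is diagonalisable (Definition \ref{D:residue}), a matrix $A_a$ with both eigenvalues $0$ must vanish, so the $(x-a)$-term is absent and one takes $\mathcal{E}=\mathcal{O}_X\oplus\mathcal{O}_X$ with the Heun connection itself. For $m>0$ the assertion is Theorem \ref{T:resolving_sing}, which I would re-read through the filtration already used in the proof of Theorem \ref{T:eigenvalues_a}: starting from the Heun connection, whose residue $\mathrm{Res}_a\nabla=-A_a$ has eigenvalues $0,-m$, one performs the lower modification of Definition \ref{D:modified_nabla} at $\big(a,\ker(\mathrm{Res}_a\nabla)\big)$ a total of $m$ times. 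By Theorem \ref{T:complimentary1} each step fixes the $0$-eigenspace and raises the other eigenvalue by one, passing through $-m+1,\dots,-1$ and terminating at residue eigenvalues $0,0$.

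For the genuinely new case $m<0$, write $m=-k$ with $k\in\N$; now $\mathrm{Res}_a\nabla=-A_a$ has eigenvalues $0,k$. Here I would apply instead the \emph{upper} modification at $\big(a,\ker(\mathrm{Res}_a\nabla-kI)\big)$ exactly $k$ times: by Theorem \ref{T:complimentary2} each step fixes the $0$-eigenspace and lowers the $k$-eigenvalue by one, yielding $k-1,\dots,1$ and again terminating with residue eigenvalues $0,0$. Thus the two signs of $m$ are handled by the two mutually inverse modification procedures of Lemma \ref{T:upperlower}, which is precisely the ``symmetry'' organising this section; in either case the off-singularity identification with \eqref{E:nabla_2} is automatic as noted above.

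The main point to verify, and the one place where $m\in\mathbb{Z}$ enters decisively, is that reaching residue eigenvalues $0,0$ is genuinely stronger than it looks: I must show the terminal residue is the \emph{zero} matrix, not merely nilpotent, so that $a$ becomes a regular point. Because $m$ is an integer, the local monodromy around $a$ has eigenvalues $\exp(2\pi i\cdot 0)=\exp(2\pi i(-m))=1$ and is trivial; equivalently, diagonalisability of the residue is preserved along the entire filtration, since the complementary invariant subspaces produced by Theorems \ref{T:complimentary1} and \ref{T:complimentary2} persist at each stage and the two eigenvalues stay \emph{distinct} until the very last modification. A diagonalisable $2\times 2$ matrix with both eigenvalues $0$ is $0$, so the terminal connection is holomorphic at $a$. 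The only delicate step is thus checking that the splitting into two invariant lines with distinct eigenvalues is not lost prematurely; granting this, the proof concludes exactly as for Theorem \ref{T:resolving_sing}.
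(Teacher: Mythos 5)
Your argument is correct in outline but takes a genuinely different route from the paper for the only case that is actually new here, namely $m<0$. The paper disposes of that case in one line: since $A_a-mI$ has eigenvalues $-m\in\mathbb{N}$ and $0$, Theorem \ref{T:resolving_sing} applies verbatim to the triple $(A_0,A_1,A_a-mI)$, and the desired connection is then $\overline{\nabla}-\frac{mI}{x-a}\,dx$ on the resulting sheaf (equivalently, the induced connection on $\mathcal{E}\otimes\mathcal{K}_a^m$); the scalar shift restores the prescribed matrix \eqref{E:nabla_2} on every open set avoiding $a$. You instead run the Drinfeld--Oblezin machine directly: $k=-m$ upper modifications at the $k$-eigenline of $\mathrm{Res}_a\nabla$, each step lowering that eigenvalue by one via Theorem \ref{T:complimentary2} while fixing the $0$-eigenline, until the residue vanishes. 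Your treatment of $m>0$ (citing Theorem \ref{T:resolving_sing}, optionally re-read through the filtration of Theorem \ref{T:eigenvalues_a}) and of $m=0$ (diagonalisable with double eigenvalue $0$ forces $A_a=0$) coincides with the paper's. What your route buys is uniformity --- the two signs of $m$ are handled by the two mutually inverse modifications of Lemma \ref{T:upperlower}, and the construction is visibly local at $a$, so agreement with \eqref{E:nabla_2} away from $a$ is automatic, as you note. What the paper's route buys is brevity and the complete avoidance of any iteration.

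The one point you must tighten is the terminal step of your iteration, which you rightly single out as delicate. After $k-1$ modifications the residue has the distinct eigenvalues $1$ and $0$, so diagonalisability up to that point is free; but the last modification produces a residue with repeated eigenvalue $0$, and ``eigenvalues $0,0$'' by itself only gives nilpotent, not zero. Your conclusion that the terminal residue vanishes rests entirely on reading Theorem \ref{T:complimentary2} as guaranteeing that the two complementary invariant lines survive even at this resonant step (a nonzero nilpotent admits no pair of complementary invariant lines, so invariance plus complementarity does force the residue to be zero). Citing the theorem as stated is legitimate, but this is exactly where a logarithmic term at $a$ could hide: if you compute in an adapted frame $xs_1,s_2$, the off-diagonal holomorphic coefficient of the unmodified connection re-enters the residue of the modified one, and it is only killed when the eigenvalues are non-resonant. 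The paper's scalar-shift argument never confronts this because it performs no iteration. To make your version self-contained you should verify that off-diagonal term vanishes at the final step, rather than invoking the invariance of $W$ wholesale; your parenthetical appeal to triviality of the local monodromy ($\exp(2\pi i\cdot 0)=\exp(-2\pi i m)=1$) does not do this, since equal unit eigenvalues of the monodromy do not preclude a unipotent block.
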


\medskip
\begin{proof} If the eigenvalue $m$ of $A_a$ is a positive integer, the result is already dealt with in Theorem \ref{T:resolving_sing}. Suppose now that $0$ and $m$, where $-m\in\mathbb{N}$, are the eigenvalues of $A_a$. Then the matrix $A_a-mI$ has eigenvalues $0$ and $-m\in \mathbb{N}$. 
Thus Theorem \ref{T:resolving_sing} asserts that there is a sheaf of locally free $\mathcal{O}_X-$module $\mathcal{E}$, and a flat connection $\overline{\nabla}: \mathcal{E}\to \mathcal{E}\otimes \Omega_X$ whose matrix relative to a certain basis is
	\[
		 -\Big[\dfrac{A_0}{x}+\dfrac{A_1}{x-1}+\dfrac{A_{a}-mI}{x-{a}}\Big]\, dx.
	\]
	Then
	\[
		\big(\mathcal{E}, \overline{\nabla} - \frac{mI}{x-a}\, dx\big)
	\]
	gives the desired flat bundle with matrix representation \eqref{E:nabla_2}. Alternatively, one can obtain the same matrix representation \eqref{E:nabla_2} by considering the induced connection in $\mathcal{E}\otimes\mathcal{K}_a^m$.
Finally note that when $m=0$, $A_a$ has a repeated eigenvalue $0$. Together with the assumption that $A_a$ is diagonalisable at the end of Definition \ref{D:residue}, one sees that $A_a= 0$, i.e., a zero matrix, and the existence of of the flat bundle $(\mathcal{E},\nabla)$ over $X$ follows trivially.

\end{proof}
\bigskip

We next show that the conclusion of the above theorem continues to hold if the apparent singularity is located at any one of the singularities $x=0,\, 1,\, \infty$ instead of at $x=a$ via the Kummer symmetry.

\bigskip

\begin{theorem}[(Resolving singularity - version III)]
\label{T:resolving_sing_c}
Let $X=\mathbb{P}^1\backslash\{0,1,a,\infty\}$ and
let $A_0,A_1,A_a$ be $2\times 2$ matrices with complex entries such that the difference of the eigenvalues of one of the matrices from $\{A_0,\, A_1,\, A_a,\, A_\infty\}$ is an integer $m$. Then there exist a sheaf of locally free $\mathcal{O}_X$-module
$\mathcal{E}$, and a flat connection
$\nabla:\mathcal{E}\longrightarrow \mathcal{E}\otimes\Omega_X$ with three singularities only such that the connection matrix of $\nabla$ relative to a frame is
	\begin{equation}\label{E:nabla_3}
		-\Big[\dfrac{A_0}{x}+\dfrac{A_1}{x-1}+\dfrac{A_{a}}{x-{a}}\Big]\, dx.
	\end{equation}
\end{theorem}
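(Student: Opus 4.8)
The plan is to reduce Theorem~\ref{T:resolving_sing_c} to the already-established version~II (Theorem~\ref{T:resolving_sing_b}) by exploiting the M\"obius transformations that permute the four singular points. The transformations preserving $\{0,1,a,\infty\}$ contain the Klein four-group generated by the three involutions
\[
\phi_\infty(x)=\frac{a(x-1)}{x-a},\qquad \phi_0(x)=\frac{x-a}{x-1},\qquad \phi_1(x)=\frac{a}{x},
\]
which swap $a$ with $\infty$, with $0$, and with $1$ respectively (transposing the remaining two points in each case). If the matrix with integer eigenvalue difference is $A_a$ itself, there is nothing to do beyond version~II. Otherwise the special point is some $p\in\{0,1,\infty\}$, and I would pull the Heun connection back along the corresponding involution $\phi_p$.

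First I would record how the residues transform. Since the pullback of a Fuchsian connection under a biholomorphism carries the residue of $\nabla$ at $\phi_p(q)$ to the residue of $\phi_p^\ast\nabla$ at $q$ without conjugation, $\phi_p^\ast\nabla$ is again of standard Heun form on $\mathbb{P}^1\backslash\{0,1,a,\infty\}$, its residue matrices being those of $\nabla$ read off under the induced relabelling of the four points. In the case $p=\infty$ this relabelling moves $A_\infty=-(A_0+A_1+A_a)$ into the slot at $x=a$, so the Fuchsian relation \eqref{E:Fuchsian} is what keeps the bookkeeping consistent; in every case the residue with integer eigenvalue difference now sits at $x=a$. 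I would then apply version~II to $\phi_p^\ast\nabla$, producing a locally free sheaf $\tilde{\mathcal E}$ with flat connection $\tilde\nabla$ that is regular at $a$ and whose matrix, relative to a frame over an open set avoiding $a$, is the standard form of $\phi_p^\ast\nabla$. Pushing this datum forward along $\phi_p$ (equivalently, pulling back along the involution $\phi_p^{-1}=\phi_p$) yields a sheaf $\mathcal E$ carrying a flat connection $\nabla$; since $\phi_p$ is an isomorphism near $a$ onto a neighbourhood of $p$, regularity of $\tilde\nabla$ at $a$ becomes regularity of $\nabla$ at $p$, so $\nabla$ has only three genuine singularities. Because $\phi_{p*}\phi_p^\ast\nabla=\nabla$ away from the singular points, the connection matrix of the resulting bundle is forced to be exactly \eqref{E:nabla_3}.

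The main obstacle I anticipate is the eigenvalue normalisation needed to invoke version~II, which literally requires eigenvalues $0$ and $m$ rather than merely an integer difference. For $p\in\{0,1,a\}$ this is automatic under the running normalisation of Definition~\ref{D:residue}, since $A_0,A_1,A_a$ each carry $0$ as an eigenvalue, and the resolution is genuine (trivial local monodromy). For $p=\infty$, however, the relocated matrix $A_\infty$ has eigenvalues $\mu',\mu''$ with only $\mu'-\mu''\in\mathbb{Z}$. There I would first tensor $\phi_\infty^\ast\nabla$ with a Kummer sheaf $\mathcal K_a^{c}$, with $c$ chosen so that the lemma $\mathrm{Res}_a\overline\nabla=\mathrm{Res}_a\nabla+cI$ normalises one eigenvalue of the residue-matrix at $a$ to $0$, apply version~II, and then twist back with $\mathcal K_a^{-c}$ exactly as in the proof of version~II. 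The delicate point is that this final twist reintroduces a scalar factor of local monodromy at $p$, so the resolution at $\infty$ is to be understood up to a rank-one Kummer twist, i.e.\ the local monodromy at $p$ is rendered scalar rather than trivial, precisely the sense already operative when $m\in\mathbb Z$ in version~II. Verifying that this twisting restores the exact matrix \eqref{E:nabla_3} while preserving the regularity established at the other three points is the one routine calculation I would carry out in full.
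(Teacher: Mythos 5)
Your proposal is correct and follows essentially the same route as the paper's own proof: normalise the eigenvalues of the distinguished residue matrix to $\{0,m\}$ by tensoring with a Kummer sheaf, use a M\"obius transformation to transport that matrix into the resolvable slot of Theorem~\ref{T:resolving_sing_b}, apply version~II, and push the resulting flat bundle back to recover the connection matrix \eqref{E:nabla_3}. The only differences are bookkeeping (the paper permutes the residue matrices directly and pushes forward by the affine map $f(x)=(1-a)x+a$, whereas you pull back along involutions of $\{0,1,a,\infty\}$), and you are in fact more explicit than the paper on two points it leaves implicit, namely the $A_\infty$ case and the fact that undoing a non-integral Kummer twist leaves a scalar (rank-one) local monodromy at the resolved point.
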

\medskip

\begin{proof} Suppose that $j\in\{0,1,a\}$ and the two eigenvalues of $A_j$ are $\sigma$ and $\sigma+m$. Then the residue of the induced connection in $\mathcal{E}\otimes \mathcal{K}_j^{-\sigma}$ at $j$ has eigenvalues $0,\, m$. So without loss of generality, we may assume that one of the matrices $A_j\ (j\in\{0,\, 1,\, a\})$ to be considered below has one eigenvalue $0$ and another one $m$. Let us now suppose that $A_0$ has eigenvalues $0,\, m$. Then Theorem \ref{T:resolving_sing_b} asserts that there is a flat connection $(\mathcal{E},\, \nabla)$, with singularities at $0$, $1$ and $a/(a-1)$, whose connection matrix relative to a certain basis is
	\[
		-\Big[\frac{A_a}{x}+\frac{A_1}{x-1}+\frac{A_0}{x-\frac{a}{a-1}}\Big]\, dx.
	\]
Let $f(x)=(1-a)x+a$ be the M\"obius transformation that maps $(0,\, 1,\, \infty)$ to $(a,\, 1,\, \infty)$. Then the push-forward bundle $f_\ast\mathcal{E}$ induces a connection whose connection matrix assumes the desired form \eqref{E:nabla_3}.\\
\end{proof}	
\bigskip

The following theorem summarizes our discussion.

\begin{theorem}\label{apparenthyperplane}
Let a classical Riemann scheme 
	$P\begin{Bmatrix}0&1&a&\infty\\0&0&0&\alpha\\ 1-\gamma&1-\delta&1-\epsilon&\beta\end{Bmatrix}$
be given. If either one of the following condition holds:
		\begin{enumerate}
\item $\gamma\in\mathbb{Z}$; or
\item $\delta\in\mathbb{Z}$; or
\item $\epsilon\in\mathbb{Z}$; or
\item $\alpha-\beta\in\mathbb{Z}$,
		\end{enumerate}
then there exists a flat bundle over $\mathbb{P}^1$ with three points deleted whose Riemann scheme relative to an appropriately chosen frame is the
prescribed one.
\end{theorem}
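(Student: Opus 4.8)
The plan is to recognise this as an immediate consequence of Theorem~\ref{T:resolving_sing_c}, once the four numbered conditions have been read as a single statement about the residue matrices. First I would realise the prescribed scheme by a Heun-type connection \eqref{E:Heun_connection}: following the dictionary of Definition~\ref{D:residue}, I choose diagonalisable residue matrices $A_0,\,A_1,\,A_a$ whose eigenvalue pairs are $\{0,\,1-\gamma\}$, $\{0,\,1-\delta\}$, $\{0,\,1-\epsilon\}$, and set $A_\infty=-(A_0+A_1+A_a)$. The modified Fuchsian relation forces $\Tr A_\infty=\alpha+\beta$, and the remaining freedom in the off-diagonal entries lets me arrange $A_\infty$ to have eigenvalues $\alpha,\,\beta$; then $\nabla=d-\big[\tfrac{A_0}{x}+\tfrac{A_1}{x-1}+\tfrac{A_a}{x-a}\big]\,dx$ carries exactly the given Riemann scheme on $\mathbb{P}^1\setminus\{0,1,a,\infty\}$.

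Next I would translate the hypotheses into the single hypothesis of Theorem~\ref{T:resolving_sing_c}. The difference of the eigenvalues of $A_0$, $A_1$, $A_a$, $A_\infty$ is $1-\gamma$, $1-\delta$, $1-\epsilon$, $\alpha-\beta$ respectively, so conditions (1), (2), (3), (4) hold precisely when the eigenvalue difference of $A_0$, $A_1$, $A_a$, $A_\infty$ is an integer (at $\infty$ this uses only that the difference $\alpha-\beta$ is unchanged by the integral Fuchsian-comparison shift). This is exactly the hypothesis that the difference of the eigenvalues of one of $\{A_0,A_1,A_a,A_\infty\}$ is an integer. Invoking Theorem~\ref{T:resolving_sing_c} then yields a locally free sheaf $\mathcal{E}$ over $\mathbb{P}^1$ with three points deleted, together with a flat connection whose matrix, relative to a frame over an open set avoiding the resolved fourth singularity, is $-\big[\tfrac{A_0}{x}+\tfrac{A_1}{x-1}+\tfrac{A_a}{x-a}\big]\,dx$. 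Since this is precisely the connection matrix encoded by the prescribed scheme, $(\mathcal{E},\nabla)$ has the prescribed Riemann scheme relative to that frame, as required.

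The only step that is more than bookkeeping---and the one I expect to be the main, though routine, obstacle---is the realisation step in the first paragraph: producing rank-two matrices with prescribed eigenvalues whose negative sum also has prescribed eigenvalues. For rank two with four points this is always solvable, since the eigenvalue data satisfy the (modified) Fuchsian relation and the off-diagonal entries supply the needed degrees of freedom; once such matrices are fixed, the translation of the four conditions and the appeal to Theorem~\ref{T:resolving_sing_c} are immediate. If one instead takes the Heun connection as given---as is done throughout this section---the statement reduces to Theorem~\ref{T:resolving_sing_c} verbatim.
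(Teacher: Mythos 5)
Your proposal is correct and follows essentially the same route as the paper, which offers no separate argument for this statement beyond the phrase ``summarizes our discussion'': one realises the scheme by a Heun-type connection, observes that conditions (1)--(4) say exactly that one of $A_0,A_1,A_a,A_\infty$ has integral eigenvalue difference, and invokes Theorem~\ref{T:resolving_sing_c}. The only imprecision is the claim that $\Tr A_\infty=\alpha+\beta$; with the finite residues normalised to eigenvalues $\{0,1-\gamma\},\{0,1-\delta\},\{0,1-\epsilon\}$ the trace-zero constraint and the classical Fuchsian relation give $\Tr A_\infty=\alpha+\beta-2$, but this integer discrepancy is harmless since, as you note, only the integrality of the eigenvalue difference enters the hypothesis of Theorem~\ref{T:resolving_sing_c}.
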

\medskip

Thus, the necessary condition $\epsilon\in\N$ for the termination of the series (\ref{E:2F1h}) generates
other similar conditions via the Kummer symmetry, and we obtain
\bigskip

\begin{theorem}\label{hyperplanesH}
A necessary condition for the Heun equation \eqref{E:heun} to have an apparent singularity is when one of $\displaystyle\gamma,\delta,\epsilon,\alpha-\beta\in \mathbb{Z}\setminus\{0\}$.
\end{theorem}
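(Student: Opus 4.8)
The plan is to detect an apparent singularity purely through the local monodromy, using the residue–monodromy dictionary established earlier. Recall that a point $p\in\{0,1,a,\infty\}$ is an apparent singularity of \eqref{E:heun} exactly when every solution is single-valued in a punctured neighbourhood of $p$, i.e. when the local monodromy around $p$ is the identity. By the Lemma expressing the eigenvalues of the local monodromy as those of $\exp[2\pi i\,\mathrm{Res}_p\nabla]$, triviality forces each monodromy eigenvalue $e^{2\pi i\rho}$ to equal $1$, hence every local exponent $\rho$ at $p$ to be an integer; in particular the \emph{exponent difference} at $p$ must be an integer. So the first step is simply to turn single-valuedness into an integrality statement on exponents.

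Next I would read the four exponent differences off the given Riemann scheme: they are $1-\gamma$ at $0$, $1-\delta$ at $1$, $1-\epsilon$ at $a$, and $\alpha-\beta$ at $\infty$. An apparent singularity at one of the four points therefore forces the corresponding difference to be an integer, which is precisely one of the memberships $\gamma,\delta,\epsilon\in\mathbb{Z}$ or $\alpha-\beta\in\mathbb{Z}$. This is visibly the necessary counterpart of the sufficiency list in Theorem \ref{apparenthyperplane}, and the Kummer symmetry used there (cf.\ Theorem \ref{T:resolving_sing_c}) is what lets me treat the three finite punctures and $\infty$ on an equal footing, since tensoring with a Kummer sheaf shifts both exponents at a point by a common amount and hence preserves the difference.

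It remains to upgrade \emph{integer} to \emph{nonzero integer}, and here I would invoke the standing hypothesis (end of Definition \ref{D:residue}) that every residue is diagonalisable. If the relevant exponent difference vanished, the two local exponents would coincide, so the diagonalisable $\mathrm{Res}_p\nabla$ would be scalar; under the normalisation built into the scheme this collapses the point to one carrying no genuine singular behaviour of \eqref{E:heun}, so it cannot be an apparent singularity. Diagonalisability is exactly what rules out the logarithmic (Jordan) solutions that coincident exponents would otherwise produce and that would destroy single-valuedness. Discarding this confluent value yields the stated $\gamma,\delta,\epsilon,\alpha-\beta\in\mathbb{Z}\setminus\{0\}$.

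The main obstacle I anticipate is precisely this last bookkeeping: cleanly separating a genuine apparent singularity (distinct integer exponents, with the logarithm killed by a suitable accessory parameter $q$) from the confluent case, and identifying \emph{which} integer value is excluded. The one delicate point is that the exponents of the scalar equation \eqref{E:heun} and those of the associated first–order system differ by the shift recorded in Lemma \ref{L:fuchsian_relation}, so care is needed when transporting the integrality condition between the Riemann scheme and the residues $\mathrm{Res}_p\nabla$. Everything else is the routine translation between \eqref{E:heun}, its Riemann scheme, and the monodromy of the connection.
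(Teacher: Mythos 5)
Your first two steps are sound and are in the same spirit as the paper's (very terse) derivation, which simply transports the termination condition $\epsilon\in\mathbb{N}$ of Theorem \ref{term1} to the other three singular points by Kummer symmetry: an apparent singularity has trivial local monodromy, hence integral exponents, hence an integral exponent difference, and the four differences read off the Riemann scheme are $1-\gamma$, $1-\delta$, $1-\epsilon$ and $\alpha-\beta$. The gap is in your final bookkeeping step, exactly where you flagged the danger. The ``confluent'' value at which the exponent difference at $x=0$ vanishes is $1-\gamma=0$, i.e.\ $\gamma=1$, not $\gamma=0$; likewise for $\delta$ and $\epsilon$. So discarding the coincident-exponent case yields $\gamma,\delta,\epsilon\in\mathbb{Z}\setminus\{1\}$, which is a \emph{different} statement from the theorem's $\mathbb{Z}\setminus\{0\}$ and does not imply it: if $\gamma=0$ and $\delta,\epsilon,\alpha-\beta$ are all irrational, your disjunction holds while the theorem's fails. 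Only for the point at infinity, where the exponent difference is literally $\alpha-\beta$, does your argument exclude the value the theorem excludes.

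To rule out $\gamma=0$ (and similarly $\delta=0$, $\epsilon=0$) you need a separate argument that has nothing to do with coincident exponents. When $\gamma=0$ the coefficient of $y'$ in \eqref{E:heun} is holomorphic at $x=0$ and the coefficient of $y$ has at most a simple pole there with residue $-q/a$; the indicial roots are $0$ and $1$. If $q\neq 0$ the Frobenius solution with exponent $0$ is obstructed at its first step and acquires a logarithm, so the local monodromy is not trivial; if $q=0$ then $x=0$ is an ordinary point and not a singularity at all. Either way $x=0$ is not an apparent singularity, whence $\gamma\neq 0$. Combined with $\gamma\in\mathbb{Z}$ from your monodromy argument this gives the stated $\gamma\in\mathbb{Z}\setminus\{0\}$. (A smaller quibble: your claim that a vanishing exponent difference makes $\mathrm{Res}_p\nabla$ scalar and the point non-singular conflates the connection, where diagonalisability is a standing assumption, with the scalar equation \eqref{E:heun}, for which $\gamma=1$ gives a genuine logarithmic singularity; the conclusion that such a point cannot be apparent is nevertheless correct.)
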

\medskip

One can easily re-interpret the above eigenvalue problem for the following Heun connection.
\bigskip

\begin{corollary} \label{T:bad_eigenvalues}
A necessary condition for the existence of $\lambda$ such that the equation
	\begin{equation}\label{E:hyper4}
	x(x-1)\Big(\frac{dY}{dx}-\big[\dfrac{A_0}{x}+\dfrac{A_1}{x-1}+\dfrac{A_a}{x-a}\big]\Big)s=\lambda \, s,
\end{equation}
has a non-trivial solution $s$ consisting of a
finite $\sideset{_2}{_1}{\operatorname{F}}$ sum with a possible factor
$x^{b_1}(x-1)^{b_2}(x-a)^{b_3}$  is that
the differences of eigenvalues of $A_s$  is an integer for some $s=0,1,a$ or $\infty$.
\end{corollary}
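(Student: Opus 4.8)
The plan is to read equation~\eqref{E:hyper4} as the eigenvalue problem $\nabla_v s=\lambda s$ of~\eqref{E:eigenvalue_prob_2} with $v=x(x-1)\tfrac{d}{dx}$, so that a choice of $\lambda$ is a choice of accessory parameter $q$ in~\eqref{E:heun}, and then to translate the hypothesis that $s$ is a finite ${}_2F_1$ sum times $x^{b_1}(x-1)^{b_2}(x-a)^{b_3}$ into the statement that, in a suitable hypergeometric frame, $s$ is a section of $\mathcal{F}\otimes\mathcal{O}(m(p))$ of \emph{bounded} pole order at exactly one of the four points $p\in\{0,1,a,\infty\}$, where $\mathcal{F}$ is the sheaf of horizontal sections of one of the hypergeometric connections $\overline{\nabla}$ built in~\S\ref{S:applications}. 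The three branch factors are absorbed by tensoring with the Kummer sheaves $\mathcal{K}_0^{b_1}\otimes\mathcal{K}_1^{b_2}\otimes\mathcal{K}_a^{b_3}$; by the residue-shift lemma of~\S\ref{kummer} each such tensoring shifts a residue by a scalar multiple of $I$ and hence leaves every \emph{difference} of eigenvalues of $A_0,A_1,A_a,A_\infty$ unchanged. I may therefore strip the prefactor and assume $s$ is an honest finite sum of hypergeometric functions whose only possible poles sit at the remaining point $p$.

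The heart of the argument is then a single pole count, which I carry out at a movable point $a\in X$ (the locus where the poles of $s$ concentrate once the ${}_2F_1$ factors are based at $\{0,1,\infty\}$); the other three positions are reduced to this one below. Write $s=\sum_{k=0}^{m}c_k\,(x-a)^{-k}A_a\sigma$, where $\sigma$ is a local horizontal section of $\mathcal{F}$, the vector $A_a\sigma$ spans the (one-dimensional) image of $A_a$, and $c_m\neq 0$; finiteness of the ${}_2F_1$ sum is exactly the assertion that the pole order at $a$ is the finite integer $m$. Repeating the computation in the proof of Theorem~\ref{T:resolving_sing} (compare Example~\ref{Eg1}), but now with $A_a$ carrying eigenvalues $0,\nu$ so that $A_a^2=\nu A_a$, one obtains
\[
\nabla_v\!\left(\frac{A_a\sigma}{(x-a)^{k}}\right)
=\frac{a(a-1)(\nu-k)}{(x-a)^{k+1}}\,A_a\sigma+(\text{lower-order poles}).
\]
Thus the coefficient of the top pole $(x-a)^{-(m+1)}$ in $\nabla_v s$ equals $c_m\,a(a-1)(\nu-m)\,A_a\sigma$. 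Since $\nabla_v s=\lambda s$ has pole order at most $m$, and since $a(a-1)\neq 0$ (because $a\in X$, so $a\neq 0,1$) while $c_mA_a\sigma\neq 0$, this coefficient can vanish only if $\nu=m\in\mathbb{Z}$. Hence the difference of the eigenvalues of $A_a$ is the integer $m$.

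It remains to package the four positions of the resolved point uniformly. Tensoring by an appropriate Kummer sheaf $\mathcal{K}_j^{-\rho_j}$, exactly as in the proof of Theorem~\ref{T:resolving_sing_c}, sends one eigenvalue of $A_j$ to $0$ while preserving the eigenvalue difference, and the push-forward along the M\"obius maps permuting $\{0,1,\infty\}$ used there carries the singularity in question to a movable point, where the count just performed applies verbatim. Running this through the three model connections of Examples~\ref{Eg:1} and~\ref{E:bad_infinty} and of~\eqref{E:nabla-new} realises the resolved point as $a$, $\infty$ and $1$ (and as $0$ after one further push-forward); by Theorem~\ref{apparenthyperplane} the four cases correspond precisely to $\epsilon,\ \alpha-\beta,\ \delta,\ \gamma\in\mathbb{Z}$, that is, to integrality of the eigenvalue difference of $A_a$, $A_\infty$, $A_1$, $A_0$ respectively. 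Therefore a finite ${}_2F_1$-sum eigensection of~\eqref{E:hyper4} forces one of these integer conditions, which is the asserted necessary condition.

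I expect the genuine obstacle to lie not in the pole count but in the first step: making precise that ``a finite ${}_2F_1$ sum with the stated prefactor'' is the \emph{same} datum as a bounded-pole-order section over a hypergeometric $\mathcal{F}$, and identifying which single point $p$ it resolves. The cleanest way to rule out an accidental cancellation producing a finite sum without any integer difference is to run the count in contrapositive form: if no eigenvalue difference were an integer, then $\nu-k\neq 0$ for every $k\in\mathbb{N}$ at each of the four points, so the pole orders of the iterates $(\nabla_v)^{k}A_p\sigma$ strictly increase without bound and no eigensection can be a finite ${}_2F_1$ sum.
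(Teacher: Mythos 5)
Your argument is correct in substance and arrives at the stated necessary condition, but it reaches the decisive step by a different route than the paper. The paper offers no proof environment for this corollary: it is presented as a one-line re-interpretation of Theorem \ref{hyperplanesH}, whose justification runs through Theorem \ref{term1} (termination of Erd\'elyi's expansion \eqref{E:2F1h} forces $\epsilon\in\mathbb{N}$, ``proved'' by citing the classical three-term recurrence of \cite{Erdelyi1}), after which the Kummer symmetry of Theorem \ref{apparenthyperplane} spreads the condition over $\gamma,\delta,\epsilon,\alpha-\beta$. You instead make the termination criterion self-contained and geometric: the leading-pole coefficient $c_m\,a(a-1)(\nu-m)$, read off from the computation of Example \ref{Eg1} with the integer eigenvalue replaced by a general $\nu$, must vanish for $\nabla_v s=\lambda s$ to hold with $s$ of pole order exactly $m$, forcing $\nu=m\in\mathbb{Z}$. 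This is precisely the necessity counterpart of Theorem \ref{T:eigenvalues_a}, which the paper exploits only in the sufficiency direction; your Kummer twists and M\"obius push-forwards to cover all four singular points coincide with Theorems \ref{T:resolving_sing_b} and \ref{T:resolving_sing_c}. What your version buys is independence from the externally cited recurrence argument behind Theorem \ref{term1}; what it leaves open is exactly the point you flag yourself, namely the identification of ``a finite ${}_2F_1$ sum with prefactor $x^{b_1}(x-1)^{b_2}(x-a)^{b_3}$'' with ``an element of bounded pole order in the rank-one $\nabla_v$-module spanned by $A_p\sigma/(x-p)^k$.'' Your pole count only sees eigensections of that special shape; a priori a finite combination of hypergeometric functions could involve components along two independent horizontal sections of $\mathcal{F}$ and so escape the one-dimensional image of $A_p$ on which the computation lives. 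Since the paper's statement is equally informal here and all of Erd\'elyi's expansions treated in Section \ref{S:applications} are of the form $\sum_n c_n(\nabla_v)^nA_a s$, this is a shared imprecision of formulation rather than an error in your argument, but a complete write-up should either restrict the corollary to that invariant module or show that the transverse component of an eigensection cannot contribute poles at the resolved point.
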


\section{Type II degeneration: Monodromy reduction}\label{S:heunpolyn}

Recall the well-known theorem:

\begin{theorem}
If the Heun equation \eqref{E:heun} has a polynomial solution, then either $\alpha$ or $\beta$ is a non-positive integer.
\end{theorem}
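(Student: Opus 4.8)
The plan is to read off the required divisibility directly from the top-degree coefficient of the equation, exactly as the exponents at $\infty$ dictate. Suppose $y$ is a polynomial solution of \eqref{E:heun} of degree $n\ge 0$ with leading coefficient $a_n\ne 0$. First I would clear the denominators inside $D$ and write
\[
Dy=x(x-1)(x-a)\,y''+\bigl[\gamma(x-1)(x-a)+\delta x(x-a)+\epsilon x(x-1)\bigr]y'+\alpha\beta\,x\,y,
\]
so that $D$ carries polynomial coefficients of degrees $3$, $2$ and $1$. Each of these three terms sends a degree-$n$ polynomial to a polynomial of degree $n+1$, while the right-hand side $qy$ of \eqref{E:heun} has degree only $n$. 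Hence the coefficient of $x^{n+1}$ in $Dy$ must vanish.

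Next I would compute that coefficient by tracking only the leading monomials. The term $x^3y''$ contributes $n(n-1)a_n$, the leading part $(\gamma+\delta+\epsilon)x^2$ of the $y'$-coefficient contributes $(\gamma+\delta+\epsilon)\,n\,a_n$, and $\alpha\beta\,x\,y$ contributes $\alpha\beta\,a_n$. Since $a_n\ne 0$, the vanishing of the $x^{n+1}$-coefficient is equivalent to
\[
n(n-1)+(\gamma+\delta+\epsilon)\,n+\alpha\beta=0.
\]
The decisive step is to substitute the Fuchsian constraint $\gamma+\delta+\epsilon=\alpha+\beta+1$, which collapses the left-hand side to $n^2+(\alpha+\beta)n+\alpha\beta=(n+\alpha)(n+\beta)$. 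Therefore $(n+\alpha)(n+\beta)=0$, so $\alpha=-n$ or $\beta=-n$; as $n$ is a non-negative integer, one of $\alpha,\beta$ is a non-positive integer.

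This computation simply makes quantitative the geometric fact that the exponents at $\infty$ in the Riemann scheme of \eqref{E:heun} are $\alpha$ and $\beta$: a degree-$n$ polynomial grows like $x^n$ at infinity, so $-n$ must be one of these two exponents. I expect no serious obstacle here; the only point demanding care is the bookkeeping that all three summands of $Dy$ genuinely reach the top degree $n+1$ (so that no accidental lower-order cancellation spoils the argument), after which the Fuchsian relation performs the factorisation automatically.
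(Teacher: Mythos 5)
Your proof is correct: the coefficient of $x^{n+1}$ in $Dy-qy$ is $\bigl[n(n-1)+(\gamma+\delta+\epsilon)n+\alpha\beta\bigr]a_n$, and the Fuchsian constraint $\gamma+\delta+\epsilon=\alpha+\beta+1$ turns its vanishing into $(n+\alpha)(n+\beta)=0$, exactly as you compute. The paper gives no proof of its own for this statement (it simply cites Ronveaux, \S 3.6), and your leading-coefficient argument is precisely the standard one encoded by the indicial equation at $\infty$, whose exponents are $\alpha$ and $\beta$; no gap remains, since the conclusion only requires the \emph{total} coefficient of $x^{n+1}$ to vanish, not that each summand individually attain that degree.
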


For more detail, see for example, Ronveaux \cite[\S3.6]{Ronveaux}. This polynomial solution of (\ref{E:heun}) (i.e., (\ref{E:heun}) is reducible) is invariant under the monodromy of (\ref{E:heun}).

Now the focus of the upcoming study is the
Heun type connection with matrix relative to a frame being $-\Big[\dfrac{A_0}{x}+\dfrac{A_1}{x-1}+\dfrac{A_a}{x-a}\Big]\, dx$ and its
monodromy reduction at $\infty$  since $0$ is already an exponent at each of the singular points $0,\, 1,\, a$. The consideration of monodromy reduction at the points $0,\,1,\, a$ can  be transposed to $\infty$ later. 
We introduce the following definition
\begin{definition}\label{D:LR}
\begin{equation*}\tag{$\mathbf{LR}(m)$}
\Big\{\begin{array}{l}
\mbox{local condition for reducibility of monodromy at $\infty$:}\\
-m\in-\mathbb{N}\mbox{ is an eigenvalue of }A_{\infty}
\end{array}
\Big\}
\end{equation*}
\end{definition}

\begin{theorem} \label{reducible}
If $\nabla$ a connection of Heun-type \eqref{E:heun-scheme}
satisfying both $\mathbf{(WGRM)}$ and $(\mathbf{LR}(m))$ for some $m\in\mathbb{N}$, and if $\mathcal{L}$ is the  sheaf of horizontal sections of $\nabla$, then $\mathcal{L}$ is reducible.
\end{theorem}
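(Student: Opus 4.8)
The plan is to read reducibility directly off the monodromy representation and to use \textbf{(WGRM)} to collapse the four-point problem into an effective three-point (hypergeometric) one, to which Beukers' criterion (Lemma \ref{hypergeometricreducible}) applies. Write $M_0,M_1,M_a,M_\infty$ for the local monodromies of $\mathcal{L}$ around $0,1,a,\infty$, normalised so that $M_0M_1M_aM_\infty=I$; by the lemma relating local monodromy to residues, each $M_j$ has eigenvalues $\exp(2\pi i\theta)$ as $\theta$ runs over the local exponents at $j$. In particular $M_0$ and $M_1$ each carry the eigenvalue $1$ coming from the $0$-exponents of $A_0$ and $A_1$, and $(\mathbf{LR}(m))$ forces the exponent $-m\in-\N$ at $\infty$, so that $M_\infty$ also acquires the eigenvalue $\exp(2\pi i m)=1$.

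First I would exploit \textbf{(WGRM)}. Since $A_a$ and $A_\infty$ are simultaneously diagonalisable, so are $M_a$ and $M_\infty$; fix a common eigenbasis $\{v_1,v_2\}$. Because $M_aM_\infty=(M_0M_1)^{-1}$ is then diagonal in $\{v_1,v_2\}$, these vectors are automatically eigenvectors of $M_0M_1$ as well, and the single relation $M_0\cdot M_1\cdot(M_aM_\infty)=I$ presents the monodromy as a hypergeometric-type triple $(M_0,M_1,M_0M_1)$ in which the ``third'' matrix $M_aM_\infty$ is semisimple with known eigenbasis. The decisive point is that, under the alignment provided by \textbf{(WGRM)}, the eigenvalue $1$ of $M_\infty$ (from $(\mathbf{LR}(m))$) pairs with the eigenvalue $1$ of $M_a$ (from the $0$-exponent at $a$), so that $M_aM_\infty$, equivalently $M_0M_1$, has $1$ as an eigenvalue.

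With this in hand, Beukers' Lemma \ref{hypergeometricreducible} applies to $M=M_0$, $N=M_1$ and $MN=M_0M_1$: these three matrices share the common eigenvalue $1$, so (in the generic case where each has distinct eigenvalues) they possess a common eigenvector $w$, necessarily fixed by $M_0$ and $M_1$. Since $w$ is an eigenvector of $M_0M_1$ and the latter is diagonal with distinct eigenvalues in $\{v_1,v_2\}$, we must have $w\in\{v_1,v_2\}$, whence $w$ is also an eigenvector of $M_a$ and of $M_\infty$ separately. Therefore $\C w$ is a line invariant under all four monodromies, i.e.\ a proper nontrivial sub-local-system, and $\mathcal{L}$ is reducible; geometrically this line is the sheaf of horizontal sections of a rank-one flat subbundle, exactly in the spirit of Theorem \ref{T:subbundle}.

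The main obstacle I anticipate is the eigenvalue bookkeeping of the second step, namely verifying that $M_aM_\infty$ genuinely carries the eigenvalue $1$: this is where \textbf{(WGRM)} must be read not merely as ``commutativity'' but as the statement that the accessory parameter has been tuned so that the $(-m)$-eigendirection of $A_\infty$ coincides with the $0$-eigendirection of $A_a$ — the direction carrying a single-valued, polynomial-type solution. I would settle this using the eigenvalue normalisations $\{0,1-\gamma\},\{0,1-\delta\},\{0,1-\epsilon\},\{\alpha,\beta\}$ together with the Fuchsian relation $\alpha+\beta-\gamma-\delta-\epsilon+1=0$. The remaining work is to dispose of the degenerate cases excluded by Beukers' hypotheses — when some $M_j$ has a repeated eigenvalue and is hence scalar, or when $M_aM_\infty$ is scalar — each of which either makes reducibility transparent or forces the connection to split as a sum of line bundles.
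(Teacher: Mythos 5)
Your route is the paper's route. The paper's proof also uses \textbf{(WGRM)} to place $A_a$ and $A_\infty$ (hence $M_a$ and $M_\infty$) in a common eigenbasis, observes that $A_0+A_1=-A_\infty-A_a$ is then diagonal there with a positive integral eigenvalue $m$, feeds the resulting hypergeometric-type triple into Lemma \ref{hypergeometricreducible} to extract a common eigenvector of $M_0$, $M_1$ and $M_0M_1$, and finally uses the simultaneous diagonalisability to conclude that this eigenvector is also an eigenvector of $A_a$, so that its span is a rank-one sub-local-system. Working with the monodromies $M_j$ rather than the residues $A_j$ is only a cosmetic difference. (One small slip: the common eigenvector produced by Beukers' lemma need not be \emph{fixed} by $M_0$ and $M_1$ --- it may carry the nontrivial eigenvalue of each --- but this does not matter for reducibility.)

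The step you flag as your ``main obstacle'' is, however, a genuine gap rather than bookkeeping, and the Fuchsian relation will not close it. Definition \ref{D:WGRM} asserts only that $A_a$ and $A_\infty$ are simultaneously diagonalisable; it does not dictate which eigendirection of $A_\infty$ carries the eigenvalue $-m$. If the $(-m)$-direction of $A_\infty$ coincides with the $(1-\epsilon)$-direction of $A_a$ rather than with its $0$-direction, then $A_0+A_1$ has eigenvalues $m-1+\epsilon$ and the remaining exponent of $-A_\infty$, and neither need be an integer (e.g.\ $\gamma=1/2$, $\delta=1/3$, $\epsilon=1/5$, $m=1$ gives eigenvalues $1/5$ and $29/30$); then $M_0M_1$ does not have $1$ as an eigenvalue and Lemma \ref{hypergeometricreducible} does not apply at all. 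The paper is aware of exactly this dichotomy: its proof treats only the aligned case and parenthetically refers the misaligned one (``the case in which $1-\epsilon+m$ is an eigenvalue of $A_0+A_1$'') to Corollary \ref{heunreducible}, where a M\"obius transformation and a Kummer twist relocate the integral exponent. To make your argument complete you must either justify the favourable alignment as part of the hypotheses --- which is essentially the ``tuned accessory parameter'' reading you allude to, and goes beyond what \textbf{(WGRM)} and $(\mathbf{LR}(m))$ literally say --- or treat the misaligned case separately by such a symmetry argument. As written, the decisive eigenvalue claim is asserted but not derived.
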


\begin{proof}
Let $\nabla$ be a connection with 
matrix relative to a frame being $-\Big[\dfrac{A_0}{x}+\dfrac{A_1}{x-1}+\dfrac{A_a}{x-a}\Big]\, dx$
such that the  eigenvalues of $A_0,\, A_1,\, A_a$ are $\{0,\, 1-\gamma\}, \{0,\, 1-\delta\}, \{0,\, 1-\epsilon\}$ respectively. It follows from the assumption $(\mathbf{LR}(m))$ that $A_0+A_1+A_a$ has an eigenvalue $m\in\N$.
Now, $A_a$ has eigenvalues $0$ and $1-\epsilon$. By ($\mathbf{WGRM}$), $A_a$ and $-A_\infty=A_0+A_1+A_a$ are simultaneously diagonalisable. Hence $A_0+A_1$ has a positive integral eigenvalue $m$ (the case in which $1-\epsilon+m$ is an eigenvalue of $A_0+A_1$ will be apparent in Lemma \ref{heunreducible} below). Since  each of the three matrices $A_0,\, A_1,\, A_a$ has $0$ as an eigenvalue, Lemma \ref{hypergeometricreducible} implies that the $\exp(2\pi iA_0)$, $\exp(2\pi iA_1)$ and $\exp(2\pi i(A_0+A_1))$ have a common eigenvector $v$, say, which is also a common eigenvector of $A_0$, $A_1$ and $A_0+A_1$. 

But $A_a$ and $A_0+A_1$ are simultaneously diagonalisable,  hence they must have  common eigenvectors. But according to the last paragraph, $A_0+A_1$ shares an eigenvector with $A_0$ and $A_1$. We conclude that span$\{v\}$ is invariant under $A_0$, $A_1$ and $A_a$. Finally, the local monodromies of $\mathcal{L}$ are specified by $A_0$, $A_1$, $A_a$ and $-(A_0+A_1+A_a)$ which have a common eigenvector, so that $\mathcal{L}$ is reducible.
\end{proof}
\medskip

We immediately obtain the following corollary by following the argument in the proof of Theorem \ref{T:subbundle}. 
\medskip

\begin{corollary}\label{linesubbundle}
Let $(\mathcal{E},\nabla)$ be a connection of Heun type satisfying both $\mathbf{(WGRM)}$ and $(\mathbf{LR}(m))$ for some $m\in\mathbb{N}$. Then there exists a non-trivial morphism of flat bundles $(\mathcal{O}_X,d)\longrightarrow (\mathcal{E},\nabla)$.
\end{corollary}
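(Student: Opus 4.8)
The plan is to unwind the target morphism into the existence of a single global horizontal section, exactly as in the proof of Theorem~\ref{T:subbundle}. Observe first that an $\mathcal{O}_X$-linear morphism $\phi\colon(\mathcal{O}_X,d)\to(\mathcal{E},\nabla)$ is determined by the global section $f:=\phi(1)\in\mathcal{E}(X)$, and commutativity of the defining square forces $\nabla f=\nabla(\phi(1))=(\iota\otimes\phi)(d\,1)=0$. Thus producing a non-trivial such $\phi$ is the same as producing a non-zero global horizontal section $f$ of $(\mathcal{E},\nabla)$; equivalently, a non-zero vector of the monodromy representation fixed by all of $\pi_1(X)$. Conversely, given such an $f$, multiplication by $f$ furnishes $\phi$ and closes the commutative diagram precisely as in Theorem~\ref{T:subbundle}. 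So the entire content is to upgrade the reducibility supplied by Theorem~\ref{reducible} to the existence of a genuinely single-valued flat section.

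Next I would recall what Theorem~\ref{reducible} actually produces. Under $\mathbf{(WGRM)}$ and $(\mathbf{LR}(m))$ its proof exhibits a vector $v$ that is a common eigenvector of $A_0$, $A_1$, $A_a$ and hence of $A_\infty=-(A_0+A_1+A_a)$, so that $\Span\{v\}$ is a $\nabla$-invariant line cutting out a rank-one sub-local system $\mathcal{L}'\subset\mathcal{L}$. Writing $A_kv=\lambda_kv$, the residue description of local monodromy says that the loop around the singular point $k$ acts on $\mathcal{L}'$ by the scalar $\exp(2\pi i\lambda_k)$. A morphism from the trivial bundle exists precisely when $\mathcal{L}'$ is the constant sheaf, i.e. when every $\lambda_k\in\mathbb{Z}$.

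The key step, and the place where the hypotheses are really used, is verifying this integrality. Here I would read off the eigenvalues already tracked in the proof of Theorem~\ref{reducible}: the vector $v$ is selected there via Lemma~\ref{hypergeometricreducible} as the common eigenvector on which $\exp(2\pi i A_0)$ and $\exp(2\pi i A_1)$ both act by $1$, so $\lambda_0,\lambda_1\in\mathbb{Z}$ with $\lambda_0+\lambda_1=m$. By $\mathbf{(WGRM)}$ the matrices $A_a$ and $-A_\infty=A_0+A_1+A_a$ are simultaneously diagonalised with $v$ in their common eigenbasis, and $(\mathbf{LR}(m))$ says $-A_\infty$ carries the eigenvalue $m$ on this line; comparing $(A_0+A_1)v=mv$ with $(A_0+A_1+A_a)v=mv$ forces $\lambda_a=0$, whence $\lambda_\infty=-(\lambda_0+\lambda_1+\lambda_a)=-m\in\mathbb{Z}$ as well. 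Thus all local monodromies fix $v$, the section spanning $\mathcal{L}'$ continues single-valuedly over $X$ to a global horizontal $f\in\mathcal{E}(X)$, and multiplication by $f$ is the desired morphism.

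I expect the main obstacle to be precisely this last bookkeeping: reducibility alone only yields a rank-one sub-local system whose monodromy may be a non-trivial scalar, and it is the interplay of $(\mathbf{LR}(m))$ (integral eigenvalue of $A_\infty$) with the integrality of $\lambda_0,\lambda_1$ coming from Lemma~\ref{hypergeometricreducible} that pins $\lambda_a=0$ and trivialises every local monodromy. One should also keep an eye on the alternative pairing flagged in the proof of Theorem~\ref{reducible}, where $1-\epsilon+m$ is the relevant eigenvalue of $A_0+A_1$; this is the companion branch handled through Lemma~\ref{heunreducible}, and I would treat it separately, noting that the same conclusion holds once the integrality of $\epsilon$ forced there is incorporated.
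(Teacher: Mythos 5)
Your first two paragraphs are sound and match the paper's strategy: a non-trivial morphism $(\mathcal{O}_X,d)\to(\mathcal{E},\nabla)$ is multiplication by a non-zero global horizontal section, and such a section exists once the invariant line $\Span\{v\}$ supplied by Theorem \ref{reducible} has trivial monodromy. The gap is in your third paragraph, at the claim that Lemma \ref{hypergeometricreducible} ``selects'' $v$ as the common eigenvector on which $\exp(2\pi iA_0)$ and $\exp(2\pi iA_1)$ both act by $1$. The lemma asserts only the \emph{existence} of a common eigenvector and says nothing about which of the two eigenlines of each matrix it lies on; consequently ``$\lambda_0,\lambda_1\in\mathbb{Z}$'', ``$\lambda_0+\lambda_1=m$'', and ``$-A_\infty$ carries the eigenvalue $m$ on this line'' are all unproved ($(\mathbf{LR}(m))$ only says $m$ is \emph{an} eigenvalue of $-A_\infty$, possibly realised on the other eigenline). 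This is not a removable technicality: after the consistency check against the listed eigenvalues (the ``routine consideration together with the Fuchsian condition''), \emph{two} cases survive — the one you want, and the case $Pv=e^{2\pi i(1-\gamma)}v$, $Qv=e^{2\pi i(1-\delta)}v$, $Rv=e^{2\pi i(1-\epsilon)}v$, $PQRv=e^{-2\pi i\beta}v$. The second case is perfectly consistent at the level of local data, since the three eigenvalues multiply to an eigenvalue of $PQR$ precisely because $\alpha=-m\in\mathbb{Z}$; no amount of local bookkeeping can exclude it.

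What eliminates that second case in the paper is a global, Liouville-type argument: if $v$ were of that type, the single-valued function $x^{\gamma-1}(x-1)^{\delta-1}(x-a)^{\epsilon-1}f$ would extend to a rational function on $\mathbb{P}^1$, holomorphic at $0$, $1$, $a$ and vanishing at $\infty$, hence identically zero — exactly the elimination step in the proof of Theorem \ref{jacobi}, transported to four singularities. This step is absent from your proposal, and the ``alternative pairing'' you flag at the end is a different fork (which eigenvalue of $A_0+A_1$ is the integer, handled via Lemma \ref{heunreducible}), not the case that actually needs to be closed. Once the second monodromy case is eliminated, your conclusion does go through: all the $\lambda_k$ are integers, the horizontal section along $\Span\{v\}$ is single-valued (indeed a polynomial vector by the exponent count at $\infty$), and multiplication by it furnishes the morphism.
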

\medskip

\begin{proof}
We essentially follow a similar procedure that led to Theorem \ref{T:subbundle} from Lemma \ref{hypergeometricreducible}.

We note that the residue matrices $A_0$, $A_1$, $A_a$ and $A_\infty$ that give rise to the monodromy matrices $P=e^{2\pi iA_0}$, $Q=e^{2\pi iA_1}$, $R=e^{2\pi iA_a}$, $PQR=e^{2\pi iA_\infty}$. 
\[
\begin{array}{ll}
P&\mbox{ has eigenvalues }1, e^{2\pi i(1-\gamma)}\\
Q&\mbox{ has eigenvalues }1, e^{2\pi i(1-\delta)}\\
R&\mbox{ has eigenvalues }1, e^{2\pi i(1-\epsilon)}\\
PQR&\mbox{ has eigenvalues }e^{2\pi i\alpha}, e^{2\pi i \beta}.
\end{array}
\]
Note that $-\alpha\in\mathbb{N}$.
According to the previous theorem, there is a common eigenvector $v$ for the above monodromy matrices (which represents a solution $f$ of the Heun equation defined on a small open set). Then after some routine consideration together with the Fuchsian condition, only two out of the total sixteen possibilities remain, which are, either
\[
\begin{array}{llll}
Pv=v&\mbox{ and }Qv=v&\mbox{ and } Rv=v&\mbox{ and } PQRv=v\\
\mbox{or}&&&\\
Pv=e^{2\pi i(1-\gamma)}v&\mbox{ and }Qv=e^{2\pi i(1-\delta)}v&\mbox{ and }Rv=e^{2\pi i(1-\epsilon)}v&\mbox{ and }PQRv=e^{-2\pi i\beta}v.\\
\end{array}
\]

The remaining steps of the proof go along a similar line of argument as those for the proof of the Theorem \ref{jacobi} with the new expression $x^{\gamma-1}(x-1)^{\delta-1}(x-a)^{\epsilon-1}f$ instead of $x^{c-1}(x-1)^{a+b-c}f$. We omit the details and conclude that the latter is eliminated. This completes the proof as the derivation of Theorem \ref{T:subbundle} from Theorem \ref{jacobi}.
\end{proof}

\subsection{Consequences of Kummer symmetry}

We may modify the requirements (\textbf{WGRM}) and ($\mathbf{LR}(m)$) assumed in  Theorem \ref{reducible} via symmetry as shown in the following corollary to the Theorem \ref{reducible}.
\medskip

\begin{corollary}\label{heunreducible}
For each $j\in\{0,1,a\}$, let $A_j$ be a $2\times 2$ matrix with $0$ as an eigenvalue. If
the following conditions hold:
\begin{itemize}
\item[\textbf{(L)}:] $m$ is an eigenvalue of $A_{\infty}=-A_0-A_1-A_a$ for some $m\in\mathbb{N}$; and
\item[\textbf{(G)}:]  there exist two matrices amongst $A_0$, $A_1$, $A_a$ and $A_{\infty}$ are simultaneously diagonalisable,
\end{itemize}
then the local system defined by the sheaf of horizontal sections of the connection with connection matrix
		\begin{equation}\label{E:nabla_4}
		-\Big[\dfrac{A_0}{x}+\dfrac{A_1}{x-1}+\dfrac{A_{a}}{x-{a}}\Big]\, dx
	\end{equation}
is reducible.
\end{corollary}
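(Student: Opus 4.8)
The plan is to deduce the corollary from Theorem~\ref{reducible} by transporting the weaker hypotheses \textbf{(G)} and \textbf{(L)} onto the rigid pair of conditions (\textbf{WGRM}) and $(\mathbf{LR}(m))$ through the Kummer symmetry of Section~\ref{S:heun}. The first thing I would record is that reducibility of the local system is invariant under the two generating symmetries. A M\"obius push-forward $f_\ast$ only relabels the four punctures and is an equivalence on local systems, so it preserves (ir)reducibility; and tensoring with a rank-one Kummer sheaf $\mathcal{K}_p^\mu$ carries a sub-local-system $\mathcal{L}'\subset\mathcal{L}$ to $\mathcal{L}'\otimes\mathcal{K}_p^\mu\subset\mathcal{L}\otimes\mathcal{K}_p^\mu$, with inverse given by tensoring by $\mathcal{K}_p^{-\mu}$, hence also preserves reducibility. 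It therefore suffices to move any connection obeying \textbf{(G)} and \textbf{(L)} to one obeying the hypotheses of Theorem~\ref{reducible}.

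Next I would run a short case analysis on the simultaneously diagonalisable pair supplied by \textbf{(G)}. If that pair already contains $A_\infty$, a M\"obius transformation permuting $\{0,1,a\}$ moves its finite partner to $a$, so that $A_a$ and $A_\infty$ become simultaneously diagonalisable; this is exactly (\textbf{WGRM}), and since \textbf{(L)} puts an integer among the eigenvalues of $A_\infty$—which is $(\mathbf{LR}(m))$ up to an irrelevant sign—Theorem~\ref{reducible} applies verbatim. If instead the pair consists of two finite residues $A_i,A_j$, I would take a M\"obius transformation sending their points to $a'$ and $\infty$ and the remaining two punctures to $0,1$; the push-forward then carries its simultaneously diagonalisable pair to $\{a',\infty\}$, at the cost of displacing the integer eigenvalue (formerly at $\infty$) to a finite puncture. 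A single integral Kummer twist at that puncture, compensated at $\infty$, simultaneously restores the normalisation that the three finite residues carry the exponent $0$ and returns the integer eigenvalue to $A_\infty$, recovering $(\mathbf{LR}(m))$; because a Kummer twist alters a residue only by a scalar, it leaves the common eigenbasis—and hence (\textbf{WGRM})—undisturbed.

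The step I expect to be the genuine obstacle is matching the eigenvalue \emph{pairing} on which the proof of Theorem~\ref{reducible} depends. There the integral eigenvalue of $A_0+A_1$ is obtained by aligning the zero exponent of $A_a$ with the integral eigenvalue of $-A_\infty$ under the simultaneous diagonalisation—the favourable alternative isolated in that proof, the unfavourable alignment instead producing the eigenvalue $1-\epsilon+m$. The reduction above may well leave one in the unfavourable alignment, where the integer eigenvalue of $-A_\infty$ is paired with the \emph{nonzero} exponent of $A_a$; here tensoring to bring the nonzero exponent of $A_a$ back to $0$ shifts $A_\infty$ off the integers, so a naive twist does \emph{not} restore $(\mathbf{LR}(m))$. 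I would instead feed Lemma~\ref{hypergeometricreducible} a different triple of monodromies adapted to the unfavourable alignment, built from the complementary common eigenvector of the diagonalised pair—the one fixed by the $\infty$-monodromy—and argue as in Corollary~\ref{linesubbundle} that the pertinent product again carries the eigenvalue $1$, so that the lemma returns a common eigenvector, i.e. a one-dimensional invariant subsystem of $\mathcal{L}$. Pinning down that such a triple always exists, so that Lemma~\ref{hypergeometricreducible} applies in both alignments, is the decisive point, and it is exactly the case deferred from Theorem~\ref{reducible} to this corollary.
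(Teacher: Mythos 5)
Your overall route --- push the connection forward by a M\"obius map and twist by an integral Kummer sheaf so as to place the simultaneously diagonalisable pair at $\{a,\infty\}$ and the integral eigenvalue at $\infty$, then invoke Theorem~\ref{reducible} --- is exactly the paper's route. The paper fixes the representative case in which $A_0$ and $A_1$ are simultaneously diagonalisable, applies $f(x)=a/x$ followed by $\mathcal{K}_0^m$, observes that the transported residues $A_1$ (now at $a$) and $A_0+mI$ (now at $\infty$) are simultaneously diagonalisable with the latter carrying the positive integral eigenvalue $m$, and then cites Theorem~\ref{reducible}; the other cases are declared to be ``handled in completely the same way.'' Your preliminary observations (invariance of reducibility under $f_\ast$ and under Kummer twists, and the irrelevance of the sign of the integer for the monodromy argument, since only integrality is needed for $1$ to be an eigenvalue of the exponentiated residue) are correct and are implicit in the paper.

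The gap is the one you name yourself and then do not close. After the reduction one must still know that the sum of the two residues at the punctures complementary to the diagonalised pair has an \emph{integral} eigenvalue, and this holds only in the ``favourable'' alignment, where the integral eigenvector of the new $-A_\infty$ is the $0$-eigenvector of the new $A_a$. In the unfavourable alignment that sum acquires an eigenvalue differing from the integer by the nonzero exponent of $A_a$ (the ``$1-\epsilon+m$'' case flagged inside the proof of Theorem~\ref{reducible}), neither of its eigenvalues need be an integer, $\exp\big(2\pi i(A_0+A_1)\big)$ need not have $1$ as an eigenvalue, and Lemma~\ref{hypergeometricreducible} cannot be fed the triple $M_0$, $M_1$, $M_0M_1$. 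Your proposed remedy --- a ``different triple of monodromies adapted to the unfavourable alignment'' --- is never exhibited, and you concede that establishing its existence ``is the decisive point''; that is precisely the missing content, not a routine verification, so the proposal is not a complete proof. You should be aware, however, that the paper fares no better here: its proof of the corollary cites Theorem~\ref{reducible}, whose own proof isolates exactly this unfavourable alignment and defers it parenthetically to Corollary~\ref{heunreducible} itself, so the published argument is circular on that case. In short, you have accurately reconstructed the paper's argument for the favourable alignment and correctly diagnosed the weak point that the paper glosses over, but the final step of your plan remains an unproved assertion.
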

\medskip

\begin{proof}
We shall focus on the case in which $A_0$ and $A_1$ are simultaneously diagonalisable in the modified (\textbf{G}). The other
cases are handled in completely the same way.
Let $\mathcal{L}$ be the sheaf of horizontal sections of a connection with connection matrix \eqref{E:nabla_4}.
Let $f$ be a M\"obius transformation satisfying $f(0)=\infty$,
$f(a)=1$ and $f(\infty)=0$. Note that $f(x)=a/x$. In particular $f(1)=a$. Then $\mathcal{K}_0^m\otimes f_*\mathcal{L}$ is
the sheaf of horizontal sections of a connection with connection matrix
	\[
		-\Big[-\dfrac{A_0+A_1+A_a+mI}{x}+\dfrac{A_a}{x-1}+\dfrac{A_{1}}{x-{a}}\Big]\, dx.
	\]
Observe that $-(A_0+A_1+A_a+mI)+A_a+A_1=-A_0-mI$ and $A_1$ are simultaneously diagonalisable, and $A_0+mI$ has a positive integral eigenvalue $m$, so the local system $\mathcal{K}_0^m\otimes f_*\mathcal{L}$ is reducible by Theorem \ref{reducible}.
\end{proof}
\bigskip

Following the above argument, we now explore the full force of the Kummer symmetry to obtain a complete criterion for the reducibility of the monodromy of the Heun connections \eqref{E:Heun_connection}.
\medskip

\begin{theorem}\label{T:reduction}
For each $j\in\{0,1,a\}$, given a $2\times 2$ matrix $A_j$, let  $\lambda_{j1}$ and $\lambda_{j2}$ be the eigenvalues of $A_j$.
We also let $\lambda_{\infty 1}$, $\lambda_{\infty 2}$ be the eigenvalues of $A_{\infty}=-A_0-A_1-A_a$. If
	\begin{itemize}
		\item[\textbf{(L)}] for each $j\in\{0,1,a,\infty\}$, there exists $\lambda_j\in\{\lambda_{j1},\lambda_{j2}\}$ such that $\sum_j\lambda_j\in\mathbb{Z}$; and	
		\item[\textbf{(G)}] two matrices amongst $A_0$, $A_1$, $A_a$ and $A_{\infty}$ are simultaneously diagonalisable,
	\end{itemize}
then the sheaf of horizontal sections of a connection on $X=\mathbb{P}^1\backslash\{0,1,a,\infty\}$ with matrix relative to a frame being
	\[
		-\Big[\dfrac{A_0}{x}+\dfrac{A_1}{x-1}+\dfrac{A_{a}}{x-{a}}\Big]\, dx
	\]
is reducible.
\end{theorem}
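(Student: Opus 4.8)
The plan is to deduce the reducibility of $\mathcal{L}$ from Corollary \ref{heunreducible} by using the Kummer symmetry purely as a normalisation device, its only job being to arrange that each of the three finite residue matrices carries the eigenvalue $0$. By hypothesis \textbf{(L)} I may fix, for each $j\in\{0,1,a,\infty\}$, an eigenvalue $\lambda_j$ of $A_j$ with $\sum_j\lambda_j\in\mathbb{Z}$. First I would twist the given local system by the rank-one Kummer local systems at the three finite singular points, forming
\[
\mathcal{L}':=\mathcal{K}_0^{\lambda_0}\otimes\mathcal{K}_1^{\lambda_1}\otimes\mathcal{K}_a^{\lambda_a}\otimes\mathcal{L}.
\]

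Next I would track the residues using the residue computation for Kummer twists in \S\ref{kummer}: tensoring with $\mathcal{K}_j^{\lambda_j}$ adds $\lambda_j I$ to the residue at $j$ and subtracts $\lambda_j I$ from the residue at $\infty$, which in terms of the connection matrices replaces $A_j$ by $A_j-\lambda_j I$ for $j\in\{0,1,a\}$ and replaces $A_\infty$ by $A_\infty+(\lambda_0+\lambda_1+\lambda_a)I$. Thus each of the three finite residue matrices of $\mathcal{L}'$ now has $0$ as an eigenvalue, while the eigenvalue of the new $\infty$-matrix corresponding to $\lambda_\infty$ is exactly $\lambda_\infty+\lambda_0+\lambda_1+\lambda_a=\sum_j\lambda_j\in\mathbb{Z}$. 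Moreover, since each twist shifts a residue only by a scalar matrix, the common eigenbasis witnessing hypothesis \textbf{(G)} is left unchanged, so the corresponding pair of residue matrices of $\mathcal{L}'$ is again simultaneously diagonalisable. Hence $\mathcal{L}'$ satisfies the hypotheses of Corollary \ref{heunreducible}, with a single caveat about $\mathbb{N}$ versus $\mathbb{Z}$ that I address next.

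The main obstacle is precisely this caveat: Corollary \ref{heunreducible} is stated with the $\infty$-eigenvalue in $\mathbb{N}$, whereas the construction only guarantees an integer $M:=\sum_j\lambda_j$, possibly negative. I would dispose of it by observing that the reducibility conclusion of Corollary \ref{heunreducible}, and of the underlying Theorem \ref{reducible}, rests only on the identity $e^{2\pi i m}=1$, valid for every $m\in\mathbb{Z}$. Indeed, an integer eigenvalue $M$ at $\infty$ forces $1$ to be a common eigenvalue of the relevant product of monodromy matrices, so Lemma \ref{hypergeometricreducible} still yields a common eigenvector and hence a rank-one sub-local-system; the stronger hypothesis $m\in\mathbb{N}$ in those earlier statements is consumed only by their conclusions about terminating or polynomial solutions, not by bare reducibility. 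This is the one place where I expect to have to argue rather than merely cite, so in the write-up I would either inspect the monodromy step of Theorem \ref{reducible} directly or, equivalently, invoke the symmetry of the four singular points to relabel one of the three normalised (eigenvalue-$0$) points as $\infty$.

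It then follows that $\mathcal{L}'$ is reducible. Finally I would transport reducibility back to $\mathcal{L}$: writing $\mathcal{L}'=\mathcal{K}\otimes\mathcal{L}$ with $\mathcal{K}:=\mathcal{K}_0^{\lambda_0}\otimes\mathcal{K}_1^{\lambda_1}\otimes\mathcal{K}_a^{\lambda_a}$ an invertible rank-one local system, any rank-one sub-local-system $\mathcal{M}'\subset\mathcal{L}'$ produces $\mathcal{M}'\otimes\mathcal{K}^{-1}\subset\mathcal{L}$, so that $\mathcal{L}$ is reducible as well. Apart from the integrality subtlety flagged above, the remaining work is the routine bookkeeping confirming that the three Kummer twists simultaneously normalise all finite residues, land an integer at $\infty$, and preserve \textbf{(G)}.
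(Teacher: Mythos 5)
Your proposal follows essentially the same route as the paper's proof: twist $\mathcal{L}$ by Kummer sheaves at $0$, $1$, $a$ so that each finite residue matrix acquires the eigenvalue $0$ while the residue at $\infty$ acquires the integer eigenvalue $\sum_j\lambda_j$, observe that the scalar shifts preserve \textbf{(G)}, and conclude via Corollary \ref{heunreducible}. You are in fact slightly more careful than the paper, which silently applies Corollary \ref{heunreducible} with an integer that need not be positive and does not spell out the untwisting of the rank-one subsystem; your only cosmetic discrepancy is the sign of the Kummer exponents (the paper twists by $\mathcal{K}_j^{-\lambda_j}$ to realise $A_j\mapsto A_j-\lambda_j I$, a sign ambiguity traceable to the residue lemma in the Kummer-sheaf subsection itself).
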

\bigskip

\begin{proof}
Let $\mathcal{L}$ be the sheaf of horizontal sections of a connection with connection matrix
\[
		-\Big[\dfrac{A_0}{x}+\dfrac{A_1}{x-1}+\dfrac{A_{a}}{x-{a}}\Big]\, dx.
	\] Then
$\mathcal{L}\otimes\mathcal{K}_0^{-\lambda_0}\otimes\mathcal{K}_1^{-\lambda_1}\otimes\mathcal{K}_a^{-\lambda_a}$ is the sheaf of horizontal sections of a connection with connection matrix
\[
		-\Big[\dfrac{A_0-\lambda_0I}{x}+\dfrac{A_1-\lambda_1I}{x-1}+\dfrac{A_{a}-\lambda_aI}{x-{a}}\Big]\, dx.
	\]
Now for each $j$, $0$ is an eigenvalue of $A_j-\lambda_jI$, and $\sum_j(A_j-\lambda_jI)$ has an eigenvalue
\[
-\lambda_{\infty}-\sum_{j=0}^a\lambda_j\in\mathbb{Z}.
\]
Together with the hypothesis that two matrices among $A_0$, $A_1$, $A_a$ and $A_{\infty}$ are
simultaneously diagonalisable, the result follows from an application of Corollary \ref{heunreducible}.
\end{proof}
\bigskip

\begin{theorem}
For each $j\in\{0,1,a\}$, given $2\times 2$ matrices $A_j$  together with $A_\infty=-A_0-A_1-A_a$, suppose that they satisfy the conditions \textbf{(L)} and \textbf{(G)} in Theorem \ref{T:reduction}. If $(\mathcal{E},\nabla)$ is a flat bundle over $\mathbb{P}^1\backslash\{0,1,a,\infty\}$ with matrix relative to a frame being
    \[
        -\Big[\dfrac{A_0}{x}+\dfrac{A_1}{x-1}+\dfrac{A_{a}}{x-{a}}\Big]\, dx,
    \]
then there exist a rank-one flat bundle $(\mathcal{F},\nabla')$ and a non-trivial morphism $(\mathcal{F},\nabla')\to(\mathcal{E},\nabla)$.
\end{theorem}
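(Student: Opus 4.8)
The plan is to deduce the statement directly from the reducibility already secured in Theorem \ref{T:reduction}, upgraded to a morphism of flat bundles through the monodromy equivalence of categories (the Deligne theorem quoted in Section \ref{S:sheaf}). The earlier results Theorem \ref{T:subbundle} and Corollary \ref{linesubbundle} produced morphisms only out of the \emph{trivial} line bundle $(\mathcal{O}_X,d)$; this was possible precisely because in those situations the invariant line carried trivial monodromy and hence furnished a \emph{global} horizontal section of $\mathcal{E}$. Under the weaker hypotheses \textbf{(L)} and \textbf{(G)} the invariant line generally carries a non-trivial character, so no global section is available, and the correct source object is a genuinely non-trivial rank-one flat bundle $(\mathcal{F},\nabla')$ — exactly what the statement asks for.

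First I would invoke Theorem \ref{T:reduction}: under \textbf{(L)} and \textbf{(G)} the sheaf of horizontal sections $\mathcal{L}$ of $(\mathcal{E},\nabla)$ is reducible. Concretely, the common eigenvector $v$ produced in that proof (via Corollary \ref{heunreducible} and Lemma \ref{hypergeometricreducible}) spans a line $\mathbb{C}\, v\subseteq\mathcal{L}_{x_0}$ stable under every local monodromy, hence under the whole $\pi_1(X)$-action. Therefore $\mathbb{C}\, v$ is a one-dimensional sub-$\pi_1(X)$-module, equivalently a rank-one sub-local-system $\mathcal{L}'\subseteq\mathcal{L}$. Since $\mathcal{L}$ is the sheaf of horizontal sections of a rank-two bundle, it is two-dimensional as a $\pi_1(X)$-module, so $\mathcal{L}'$ is indeed of rank one and the inclusion $\mathcal{L}'\hookrightarrow\mathcal{L}$ is a proper non-trivial $\pi_1(X)$-linear map.

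Next I would feed this inclusion into a quasi-inverse of the monodromy functor. Because $(\mathcal{E},\nabla)$ is Fuchsian, it lies in the category of flat bundles on $X$ with at most regular singularities along $\{0,1,a,\infty\}$, and the quoted Deligne equivalence identifies this category with that of $\pi_1(X)$-modules. Applying the essentially surjective inverse functor to the one-dimensional module $\mathcal{L}'$ yields a rank-one flat bundle $(\mathcal{F},\nabla')$ with regular singularities, and applying the (fully faithful) functor to the inclusion $\mathcal{L}'\hookrightarrow\mathcal{L}$ yields a morphism of flat bundles $(\mathcal{F},\nabla')\to(\mathcal{E},\nabla)$ realising that inclusion on horizontal sections. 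Rank is preserved (it is the dimension of the associated module), so $\mathcal{F}$ has rank one, and the morphism is non-zero because $\mathcal{L}'$ is non-trivial. If an explicit model is desired, $(\mathcal{F},\nabla')$ is the tensor product of Kummer sheaves $\mathcal{K}_0^{\lambda_0}\otimes\mathcal{K}_1^{\lambda_1}\otimes\mathcal{K}_a^{\lambda_a}$ whose exponents are read off from the local exponents of $v$, matching the symmetry bookkeeping already used in Theorem \ref{T:reduction} and Corollary \ref{heunreducible}.

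The one point requiring care — the step I would flag as the main obstacle — is the passage from a sub-$\pi_1(X)$-module to an honest morphism of flat bundles over $X$, rather than a mere map of local systems; this is exactly what the \emph{full faithfulness} and essential surjectivity of the monodromy functor on the regular-singular class provide, so no residual work is needed there. Should one prefer to bypass the abstract equivalence, an equally valid route is to tensor with $\mathcal{K}_0^{-\lambda_0}\otimes\mathcal{K}_1^{-\lambda_1}\otimes\mathcal{K}_a^{-\lambda_a}$ (and, if necessary, apply a Möbius push-forward) so as to reduce to the hypotheses of Corollary \ref{linesubbundle}, obtain there the morphism out of $(\mathcal{O}_X,d)$, and then untwist: the trivial bundle transports back to the rank-one flat bundle $(\mathcal{F},\nabla')$ and the morphism transports back to the asserted non-trivial morphism $(\mathcal{F},\nabla')\to(\mathcal{E},\nabla)$.
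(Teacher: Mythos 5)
Your proposal is correct, and your fallback route is essentially the paper's own argument. The paper's proof is two sentences: reducibility of the sheaf of horizontal sections follows from Theorem \ref{T:reduction}, and the morphism is then obtained ``by the procedure shown in Theorem \ref{T:subbundle}'' --- i.e.\ exactly your untwisting route: twist by Kummer sheaves so that the invariant line acquires trivial monodromy, realise the resulting global horizontal section as a multiplication map out of $(\mathcal{O}_X,d)$ as in the commutative diagram of Theorem \ref{T:subbundle}, and transport back, so that the source becomes the rank-one flat bundle $\mathcal{K}_0^{\lambda_0}\otimes\mathcal{K}_1^{\lambda_1}\otimes\mathcal{K}_a^{\lambda_a}$ (this is also what part (ii) of Theorem \ref{T:subbundle} does in the three-singularity case). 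Your primary route, through full faithfulness and essential surjectivity of the Deligne monodromy equivalence applied to the inclusion $\mathcal{L}'\hookrightarrow\mathcal{L}$, is a genuinely different and more abstract packaging: it buys you the morphism without ever writing down the section, at the cost of treating the equivalence as a black box and of having to note separately that the quasi-inverse lands on a regular-singular rank-one object (which your explicit Kummer-sheaf model supplies anyway). The explicit route has the advantage that it exhibits the source $(\mathcal{F},\nabla')$ and the map concretely, which is what the paper actually uses downstream (e.g.\ in Theorem \ref{T:geo_takemura}). Either way the argument is complete; you correctly identify that the only delicate point is upgrading a sub-$\pi_1$-module to a bundle morphism, and both of your mechanisms handle it.
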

\medskip

\begin{proof}
The sheaf of horizontal sections of $(\mathcal{E},\nabla)$ is reducible by Theorem \ref{T:reduction}. So one obtains the desired morphism by the procedure shown in Theorem \ref{T:subbundle}.
\end{proof}
\smallskip

A classical analogue is
\smallskip

\begin{theorem} \label{polysoln} Suppose that either one of $\alpha,\beta$ is in $\Z\setminus\{1\}$ or one of $\alpha-\gamma,\alpha-\delta,\alpha-\epsilon,\beta-\gamma,\beta-\delta,\beta-\epsilon$ is in  $\Z\setminus\{0\}$.
There exists $q$ such that the Heun equation (\ref{E:heun}) has a polynomial type solution, i.e., $x^{\tau_0}(x-1)^{\tau_1}(x-a)^{\tau_a}p(x)$
for some $\tau_0\in\{0,1-\gamma\}$, $\tau_1\in\{0,1-\delta\}$, $\tau_a\in\{0,1-\epsilon\}$ and
some polynomial $p(x)$.
\end{theorem}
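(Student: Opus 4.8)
The plan is to recognise the arithmetic hypotheses as the two conditions (\textbf{L}) and (\textbf{G}) of Theorem \ref{T:reduction}, to produce the accessory parameter $q$ as the free parameter that secures (\textbf{G}), and then to convert the resulting reducibility of the monodromy into a polynomial-type solution by the exponent bookkeeping already used in the proofs of Theorem \ref{jacobi} and Corollary \ref{linesubbundle}.

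First I would verify (\textbf{L}). Writing the exponents from the Riemann scheme as $\{0,1-\gamma\}$, $\{0,1-\delta\}$, $\{0,1-\epsilon\}$ at $0,1,a$ and $\{\alpha,\beta\}$ at $\infty$, each listed hypothesis yields an explicit admissible choice $\lambda_0,\lambda_1,\lambda_a,\lambda_\infty$ with integral sum: if $\alpha\in\Z$ (resp.\ $\beta\in\Z$) take $\lambda_\infty=\alpha$ (resp.\ $\beta$) and the other three equal to $0$; if $\alpha-\gamma\in\Z$ take $\lambda_0=1-\gamma$, $\lambda_\infty=\alpha$ and $\lambda_1=\lambda_a=0$, so that $\sum_j\lambda_j=\alpha-\gamma+1\in\Z$, and similarly for the five remaining differences (replacing $1-\gamma$ by $1-\delta$ or $1-\epsilon$ at the appropriate point and/or $\alpha$ by $\beta$). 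Thus in every case (\textbf{L}) holds, and since (\textbf{L}) concerns only the parameters it is unaffected by the subsequent choice of $q$.

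Next I would secure (\textbf{G}) by choosing $q$. For fixed local exponents the residues $A_0,A_1,A_a$ of the Heun-type connection \eqref{E:heun-scheme} depend on the accessory parameter $q$, and requiring that an appropriate pair among $A_0,A_1,A_a,A_\infty$ be simultaneously diagonalisable is exactly the weak global reducibility condition (\textbf{WGRM}) recorded in Section \ref{S:heun}. Because $A_a$ and $A_\infty$ have distinct eigenvalues, simultaneous diagonalisability is equivalent to commutation, which is a single algebraic equation in the unknown $q$; solving it produces the required value, and by the Kummer and M\"obius manoeuvres of Corollary \ref{heunreducible} the chosen pair can be matched to whichever exponent choice was made in (\textbf{L}). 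Establishing that such a $q$ genuinely exists, compatibly with the relevant pair, is the step I expect to be the main obstacle, since the whole assertion is precisely the existence of $q$.

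With (\textbf{L}) and (\textbf{G}) in force, Theorem \ref{T:reduction} renders the sheaf of horizontal sections reducible, so the monodromy matrices $P=e^{2\pi i A_0}$, $Q=e^{2\pi i A_1}$, $R=e^{2\pi i A_a}$ and $PQR$ of \eqref{E:heun} share a common eigenvector, representing a local solution $f$ with one-dimensional monodromy. Then $f$ has well-defined exponents $\tau_0\in\{0,1-\gamma\}$, $\tau_1\in\{0,1-\delta\}$, $\tau_a\in\{0,1-\epsilon\}$, and I would set $p=x^{-\tau_0}(x-1)^{-\tau_1}(x-a)^{-\tau_a}f$. This $p$ is single-valued on $\mathbb{P}^1\backslash\{0,1,a,\infty\}$, hence extends to a rational function; its exponents at $0,1,a$ are $0$, so it is holomorphic there, and its only possible pole is at $\infty$, of order $-(\tau_0+\tau_1+\tau_a+\tau_\infty)\in\Z$ by (\textbf{L}). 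Following the two-case elimination of Theorem \ref{jacobi} (using the exclusions $\Z\setminus\{1\}$, resp.\ $\Z\setminus\{0\}$, to discard the degenerate branch and to force this integer to be non-negative), one concludes that $p$ is a polynomial, yielding the asserted solution $x^{\tau_0}(x-1)^{\tau_1}(x-a)^{\tau_a}p(x)$ of \eqref{E:heun}.
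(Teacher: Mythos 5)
Your overall route is the one the paper intends: the paper prints no proof of Theorem \ref{polysoln} at all (it is introduced only as ``a classical analogue'' of the flat-bundle theorem immediately preceding it), so the expected argument is precisely yours --- check (\textbf{L}) from the arithmetic hypotheses, realise (\textbf{G}) by a choice of $q$, invoke Theorem \ref{T:reduction}, and convert the common eigenvector of $P,Q,R,PQR$ into a polynomial-type solution by the exponent bookkeeping of Theorem \ref{jacobi} and Corollary \ref{linesubbundle}. Your verification of (\textbf{L}) and your final single-valuedness/degree-count step are sound, and your reading of the exclusions $\Z\setminus\{1\}$ and $\Z\setminus\{0\}$ as what forces the surviving branch to have non-negative degree is correct.

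The genuine soft spot is the one you flag yourself: the existence of $q$ realising (\textbf{G}). As written, ``commutation is a single algebraic equation in the unknown $q$; solving it produces the required value'' is not a proof: the vanishing of the commutator $[A_a,A_\infty]$ is three scalar equations, not one; the dependence of the residue matrices on $q$ (mediated by the apparent singularity at $\infty$ that links \eqref{E:Heun_connection} to the scalar equation \eqref{E:heun}) is never made explicit; and even a genuine one-parameter polynomial condition could degenerate to a nonzero constant. Since the entire content of the theorem is this existence statement, the step must be closed, and there are two standard ways to do it. Either construct the connection directly: put $A_a$ and $A_\infty$ simultaneously in diagonal form with the prescribed eigenvalues and decompose the resulting fixed matrix $-A_\infty-A_a$ as $A_0+A_1$ with the prescribed eigenvalue pairs (an elementary rank-two exercise once the traces match via the Fuchsian relation); the Heun-type connection so obtained then determines the value of $q$. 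Or argue classically: after the Kummer substitutions reducing to the case $-\alpha\in\mathbb{N}\cup\{0\}$, the operator $D$ of \eqref{E:heun} maps $x^k$ to a polynomial whose $x^{k+1}$-coefficient is $(k+\alpha)(k+\beta)$, hence preserves the $(-\alpha+1)$-dimensional space of polynomials of degree at most $-\alpha$ and therefore has an eigenvalue $q$ there. Either supplement makes your argument complete.
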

\medskip

\begin{remark}
The first part of Theorem \ref{polysoln} can also be obtained from differential Galois theory \cite[p. 241]{DL}. However, our approach of the whole paper is from monodromy consideration of Heun equations directly instead of its simplification to differential Galois groups.   
\end{remark}

\section{Coincidence between Type I and Type II degenerations} \label{S:TwoType}

\subsection{Takemura's eigenvalues inclusion theorem} \label{S:Eig_In}

It follows from Theorem \ref{term1} that if $\epsilon=-m$, then there exist $m+1$ eigenvalues such that
the series (\ref{E:2F1h}) terminates. On the other hand, if $\alpha=-n$, then
there exist $n+1$ eigenvalues such that the series (\ref{E:2F1h}) becomes the Heun polynomial of degree $n$.
The following theorem of Takemura states that these two sets of the
eigenvalues have an inclusion relation.

\begin{theorem}[({\cite[Theorem 5.3]{Take4}})]\label{EigenInc}
Assume that $\epsilon$ and $\alpha$ are non-positive integers, but $\beta$ is not.
\begin{enumerate}
\item If $-\epsilon\geq-\alpha$ and the Heun equation (\ref{E:heun}) has a polynomial solution, then the singularity $x=a$ is apparent.
\item If $-\alpha\geq-\epsilon$ and the singularity $x=a$ of the Heun equation (\ref{E:heun}) is apparent, then the equation has a polynomial solution.
\end{enumerate}
\end{theorem}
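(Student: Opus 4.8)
The plan is to realise the two finite sets of accessory values as one and the same family of spectra, obtained by resolving the singularity at $x=a$ and then reading off the exponent at $\infty$. Write $-\epsilon,-\alpha\in\mathbb{N}\cup\{0\}$, so that the exponent difference at $a$ is the positive integer $1-\epsilon$ and a prospective Heun polynomial has degree $-\alpha$; the hypothesis $\beta\notin\mathbb{Z}$ keeps us away from degenerate coincidences. Since $\epsilon\in\mathbb{Z}$ the hypothesis $(\mathbf{WAS}(m))$ is available at $a$, while $\alpha\in\mathbb{Z}_{\le 0}$ supplies the local reducibility datum $(\mathbf{LR}(-\alpha))$ at $\infty$. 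The first step is to recall, from Theorem \ref{term1} read through Theorem \ref{T:eigenvalues_a}, that the values of $q$ for which $x=a$ is genuinely apparent are in bijection with the equally spaced integer spectrum $\{0,1,\dots,-\epsilon\}$ of the operator $\nabla_w$ on the resolved bundle; here I would use the resolution in the form \eqref{E:representation_1944} of Example \ref{E:bad_infinty}, which displaces the residue $A_a$ onto $\infty$. The key bookkeeping fact is that the $k$-th eigensection then carries the monodromy at $\infty$ with the shifted exponent $\alpha+k$ (the shift being exactly the eigenvalue $k$ of $\nabla_w$), the companion exponent being $\beta-k$.

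For part (2), where $x=a$ is assumed apparent and $-\alpha\ge-\epsilon$, the apparent structure is given, so I may work directly in the resolved hypergeometric picture produced by Theorem \ref{T:resolving_sing_b}. A fixed apparent value corresponds to some eigenvalue $k\in\{0,\dots,-\epsilon\}$, and $-\alpha\ge-\epsilon\ge k$ forces $\alpha+k=-(-\alpha)+k\le 0$. Thus the resolved connection has a non-positive integer exponent at $\infty$, and Theorem \ref{jacobi} together with its sheaf-theoretic form Theorem \ref{T:subbundle} furnishes a global polynomial horizontal section, equivalently a non-trivial morphism $(\mathcal{O}_X,d)\to(\mathcal{E},\nabla)$ as in Corollary \ref{linesubbundle}. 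Transporting this section back through the resolution morphism — which is an isomorphism away from $a$ and tensors by $\mathcal{O}(m(a))$ at $a$ — produces a solution of \eqref{E:heun} of polynomial type, \emph{provided} it has no pole at $a$; the degree budget $-\alpha\ge-\epsilon$ is exactly what guarantees this.

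For part (1), where a polynomial solution is assumed and $-\epsilon\ge-\alpha$, I would argue instead at the level of the solution itself, since apparentness is now the conclusion rather than a hypothesis. The polynomial solution $y_1=p(x)$ has $\deg p=-\alpha\le-\epsilon<1-\epsilon$, so $p$ cannot vanish at $a$ to the order $1-\epsilon$ of the upper exponent; hence $y_1$ is the exponent-$0$ local solution at $a$. A reduction-of-order computation using the Heun Wronskian $W\sim(x-a)^{-\epsilon}$ then shows that $W/y_1^{\,2}$ is holomorphic at $a$ (no residue term), so the second local solution $y_1\!\int W/y_1^{\,2}\,dx\sim(x-a)^{1-\epsilon}$ carries no logarithm. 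Therefore the local monodromy at $a$ is trivial and $x=a$ is apparent. Equivalently, in the monodromy language of Theorem \ref{reducible} and Lemma \ref{hypergeometricreducible}, the rank-one sub-local-system cut out by $y_1$ splits the unipotent monodromy at $a$ precisely because the degree inequality prevents $y_1$ from occupying the upper exponent slot.

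The step I expect to be the genuine obstacle is this exponent–degree accounting at $a$, together with the precise index matching between $M=-\epsilon$, the eigenvalue shift $k$, and the half-integer shifts that are absorbed into the Riemann scheme in Example \ref{E:bad_infinty}. Concretely: in part (2) one must verify that the transported section, a priori a horizontal section of $\mathcal{F}\otimes\mathcal{O}(m(a))$ and hence rational with a possible pole at $a$, is actually holomorphic with $\tau_a\in\{0,1-\epsilon\}$; and in part (1) one must verify the vanishing of the logarithmic coefficient. In both cases the single inequality between $-\epsilon$ and $-\alpha$ is what decides the outcome, and once it is settled the inclusion of the two finite spectra is the formal consequence that the polynomial condition selects exactly the sub-range $\{0,\dots,\min(-\epsilon,-\alpha)\}$ of the apparent spectrum $\{0,\dots,-\epsilon\}$, giving part (1) when $-\epsilon\ge-\alpha$ and part (2) when $-\alpha\ge-\epsilon$.
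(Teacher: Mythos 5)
Your proposal is correct in substance, but it is a hybrid: part (2) follows essentially the paper's own route, while part (1) is a genuinely different, classical argument. The paper proves both halves at once by feeding $(\mathbf{WGRM})$, $(\mathbf{WAS}(m))$ and $(\mathbf{LR}(n))$ into Theorem \ref{T:geo_takemura}: resolve $x=a$ as in Example \ref{E:bad_infinty}, observe that $-(A_0+A_1)=-(A_0+A_1+A_a)+A_a$ has an integer eigenvalue ($n$ or $n-m$), and apply Theorem \ref{T:subbundle} to extract a rank-one flat sub-bundle, which is then composed with the resolution isomorphism. Notably, the paper's argument never invokes the inequality between $-\epsilon$ and $-\alpha$ (Theorem \ref{T:subbundle} covers both signs of the eigenvalue, producing either the trivial line bundle or a Kummer-twisted one), whereas your accounting makes explicit where the inequality actually bites: in part (2) it forces $\alpha+k\le 0$ for every admissible $k\in\{0,\dots,-\epsilon\}$, which is exactly what rules out the non-polynomial reducibility type and lands you in case (ii) of Theorem \ref{jacobi}; in part (1) it forces $\deg p=-\alpha<1-\epsilon$, so the polynomial sits in the exponent-$0$ slot at $a$ and the reduction-of-order integrand $W/y_1^2\sim(x-a)^{-\epsilon}\cdot(\mathrm{holomorphic})$ has no residue, whence no logarithm. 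Your Wronskian argument for part (1) is complete and more elementary than the paper's; in this respect your write-up is closer to Takemura's original statement than the paper's own proof is.

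Two bookkeeping points. First, the companion exponent at $\infty$ of the resolved connection is not $\beta-k$ but $\beta-\epsilon-k$ (the exponents at $\infty$ of $\overline{\nabla}$ in \eqref{E:representation_1944} sum to $\alpha+\beta-\epsilon$, not $\alpha+\beta$); this is harmless since $\beta\notin\mathbb{Z}$ keeps that exponent non-integral either way. Second, the "degree budget" you invoke to guarantee that the transported section has no pole at $a$ is not where the inequality is needed: any horizontal section of the Heun connection is automatically holomorphic at $a$ because both local exponents $0$ and $1-\epsilon$ there are non-negative integers; the single-valuedness inherited from the hypergeometric polynomial then forces exponent $0$ at $x=0$ and $x=1$ (granted $\gamma,\delta\notin\mathbb{Z}$, an implicit genericity assumption the paper also makes), so the section is a genuine polynomial of degree $-\alpha$. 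With these adjustments your argument closes.
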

\medskip

\subsection{Geometric interpretation of Takemura's proof}
While Takemura's method of proof is purely analytic, we shall establish a geometric argument that is based on the classification of monodromy reduction established in this paper that naturally leads to his result.
\medskip

\begin{theorem}\label{T:geo_takemura}
Let $a\in\mathbb{C}\backslash\{0,1\}$ and $X=\mathbb{C}\backslash\{0,1\}$. If $A_0$, $A_1$, $A_a$ are $2\times 2$ matrices with complex entries such that each matrix has $0$ as an eigenvalue and satisfy $(\mathbf{WGRM})$, $(\mathbf{WAS}(m))$ and $(\mathbf{LR}(n))$ for some $m$, $n\in\mathbb{N}$. Then there exist a rank-one flat bundle $(\mathcal{F}^\prime,\nabla')$ and a non-trivial morphism of flat bundles
\[
(\mathcal{F}^\prime,\nabla^\prime)\longrightarrow\Big(\mathcal{O}_{X\backslash\{a\}}\oplus\mathcal{O}_{X\backslash\{a\}},\ d-\big[\dfrac{A_0}{x}+\dfrac{A_1}{x-1}+\dfrac{A_a}{x-a}\big]dx\Big).
\]
\end{theorem}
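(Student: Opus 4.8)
The plan is to reduce the statement to the reducibility already established in Section~\ref{S:heunpolyn} and then to promote that reducibility to an explicit morphism out of a Kummer line bundle. First I would observe that the hypotheses place us exactly in the setting of Theorem~\ref{reducible}: each of $A_0,A_1,A_a$ carries $0$ as an eigenvalue, $(\mathbf{WGRM})$ gives the simultaneous diagonalisability of $A_a$ and $A_\infty$, and $(\mathbf{LR}(n))$ gives that $A_\infty$ has the eigenvalue $-n\in-\N$. Hence Theorem~\ref{reducible} applies and the sheaf $\mathcal{L}$ of horizontal sections of the Heun connection over $X\backslash\{a\}=\mathbb{P}^1\backslash\{0,1,a,\infty\}$ is reducible. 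From the proof of that theorem (which runs through Lemma~\ref{hypergeometricreducible}) I would extract a nonzero vector $v$ that is a \emph{simultaneous} eigenvector of $A_0,A_1,A_a$ and $A_\infty$, and record the eigenvalues $\mu_0,\mu_1,\mu_a$ of $A_0,A_1,A_a$ on $v$.

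Next I would package the line $\C v\subset\C^2$, which is invariant under all four residues, as a rank-one $\nabla$-invariant local subsystem $\mathcal{L}'\subset\mathcal{L}$. Since $\mathcal{L}'$ is a rank-one local system on $\mathbb{P}^1\backslash\{0,1,a,\infty\}$, the classification of rank-one connections via Kummer sheaves recalled in \S\ref{kummer} identifies it with the sheaf of horizontal sections of a rank-one flat bundle $(\mathcal{F}',\nabla')$ whose matrix relative to a frame is
\[
-\Big[\dfrac{\mu_0}{x}+\dfrac{\mu_1}{x-1}+\dfrac{\mu_a}{x-a}\Big]\,dx,
\]
that is, $\mathcal{F}'=\mathcal{K}_0^{\mu_0}\otimes\mathcal{K}_1^{\mu_1}\otimes\mathcal{K}_a^{\mu_a}$. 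This is the candidate rank-one flat bundle demanded by the statement. The desired non-trivial morphism $(\mathcal{F}',\nabla')\to(\mathcal{O}_{X\backslash\{a\}}\oplus\mathcal{O}_{X\backslash\{a\}},\nabla)$ is then produced exactly as in Theorem~\ref{T:subbundle} and Corollary~\ref{linesubbundle}: a local horizontal generator of $\mathcal{L}'$, viewed inside $\mathcal{L}$, furnishes an $\mathcal{O}_{X\backslash\{a\}}$-linear ``multiplication'' map $\mathcal{F}'\to\mathcal{O}_{X\backslash\{a\}}^{2}$ that fits into the commuting square defining a morphism of flat bundles, and it is non-trivial because $v\neq 0$.

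It is here that $(\mathbf{WAS}(m))$ enters and that the real work lies. I would use $(\mathbf{WGRM})$ together with $(\mathbf{WAS}(m))$ in tandem to pin down the exponent $\mu_a$: because $A_a$ and $A_\infty$ are simultaneously diagonalisable, $v$ may be chosen in their common eigenbasis, so $v$ is an \emph{honest} eigenvector of $A_a$ (not merely a generalised one) with $A_\infty v=-nv$; consequently $\mu_a$ is forced to be one of the two eigenvalues $\{0,m\}$ prescribed by $(\mathbf{WAS}(m))$, matching the apparent-singularity exponents at $a$. The main obstacle is precisely this bookkeeping of exponents at $a$ and the verification that the chosen $v$ is a genuine common eigenvector, since $(\mathbf{WAS}(m))$ is what guarantees the reducible subsystem is compatible with the resolution of the singularity $x=a$. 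This compatibility is the geometric heart of the statement and is exactly what upgrades the bare reducibility of Theorem~\ref{reducible} into the combined Type~I/Type~II picture underlying Takemura's eigenvalue inclusion theorem.
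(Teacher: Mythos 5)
Your argument is correct as a proof of the literal statement, but it follows a genuinely different route from the paper's. You apply Theorem~\ref{reducible} (and in effect Corollary~\ref{linesubbundle}) directly to the Heun connection: the hypotheses $(\mathbf{WGRM})$ and $(\mathbf{LR}(n))$ alone already yield a common eigenvector of all four residues, hence a rank-one sub-local-system, hence the desired morphism out of a Kummer line bundle --- so on your route $(\mathbf{WAS}(m))$ is never actually needed, and your closing paragraph misdiagnoses where it enters: Theorem~\ref{reducible} already guarantees that $\mathrm{span}\{v\}$ is invariant under $A_0$, $A_1$, $A_a$ \emph{and} $A_\infty$ without any appeal to $(\mathbf{WAS}(m))$, so there is no residual ``bookkeeping of exponents at $a$'' to be done for the statement as posed. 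The paper instead uses $(\mathbf{WAS}(m))$ first, invoking the resolution of the singularity $x=a$ (Theorem~\ref{T:resolving_sing} in the variant of Example~\ref{E:bad_infinty}) to produce a hypergeometric-type flat bundle $(\mathcal{E},\nabla)$ with connection matrix $-\big[\tfrac{A_0}{x}+\tfrac{A_1}{x-1}\big]dx$ isomorphic to the Heun connection away from $a$; then $(\mathbf{LR}(n))$ and $(\mathbf{WGRM})$ force $A_0+A_1$ to have $n$ or $n-m$ as an eigenvalue, Theorem~\ref{T:subbundle} supplies the rank-one subbundle of $(\mathcal{E},\nabla)$, and the two morphisms are composed. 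What the paper's longer route buys is precisely the factorisation of the morphism through the resolved (three-singularity) connection: that factorisation, not the bare existence of a rank-one subobject, is what identifies the Heun polynomial data with hypergeometric data and drives the geometric proof of Takemura's inclusion theorem (Theorem~\ref{EigenInc}) in the next subsection. Your shorter argument proves the stated existence but loses that factorisation, so it would not by itself support the application that follows.
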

\medskip

\begin{proof} Since ($\mathbf{WAS}(m)$) is satisfied, by a variation of Theorem \ref{T:resolving_sing} presented in Example \ref{E:bad_infinty},  there exist a rank-two flat bundle $(\mathcal{E},\nabla)$ over $X$ with
matrix relative to a frame being $-\Big[\dfrac{A_0}{x}+\dfrac{A_1}{x-1}\Big]\, dx$
and an isomorphism of flat bundles over $X\backslash\{a\}$
\[
(\mathcal{E},\nabla)\longrightarrow\Big(\mathcal{O}_{X\backslash\{a\}}\oplus\mathcal{O}_{X\backslash\{a\}},d-\big[\dfrac{A_0}{x}+\dfrac{A_1}{x-1}+\dfrac{A_a}{x-a}\big]\, dx\Big).
\]
Notice that the connection $\nabla$ above is of hypergeometric type, with $0$ as an eigenvalue of both of $A_0$ and $A_1$.  Thus the
 residue matrix of  $(\mathcal{E},\nabla)$ at $\infty$ is $-(A_0+A_1)=-(A_0+A_1+A_a)+A_a$. By the hypotheses ($\mathbf{LR}(n)$) and (\textbf{WGRM}), $A_0+A_1+A_a$ has an integral eigenvalue $n$ and $A_0+A_1+A_a$ and $A_a$ can be simultaneously diagonalised respectively. Thus, the hypothesis ($\mathbf{WAS}(m)$) implies that the sum $A_0+A_1$ has either $n-m$ or $n$ as an eigenvalue. In any case, one applies Theorem \ref{T:subbundle} to $(\mathcal{E},\nabla)$ to yield a rank-one flat bundle $(\mathcal{F}^\prime,\nabla')$ together with a nontrivial morphism
\[
(\mathcal{F}^\prime,\nabla')\longrightarrow(\mathcal{E},\nabla).
\]
The proof is finished by composing the two morphisms above.
\end{proof}
\medskip

\subsubsection*{Geometric proof of Takemura's theorem}
The assumption $-\alpha\in\mathbb{N}$ and the accessory parameter $q$ {being} chosen appropriately in  the Heun equation \eqref{E:heun_2} made in the Theorem \ref{EigenInc} (i) above correspond to  ($\mathbf{LR}(n)$) and  (\textbf{WGRM})  in our Theorem \ref{T:geo_takemura}. Of course, it is well-known that the Heun equation   \eqref{E:heun_2} admits a polynomial solution (see \cite{DL, Ronveaux}) under these assumptions.  The additional assumption that $-\epsilon\in\mathbb{N}$ made in Theorem \ref{EigenInc} (i) amongst to further assuming ($\mathbf{WAS}(m)$) in Theorem  \ref{T:geo_takemura} above.  The assumptions made in part (ii) of Theorem \ref{EigenInc} again amongst to be the same assumptions of Theorem  \ref{T:geo_takemura}, where the second assumption in Theorem \ref{EigenInc}(ii) corresponds to the assumptions ($\mathbf{WAS}(m)$) and (\textbf{WGRM}) in Theorem  \ref{T:geo_takemura}.

The conclusion of Theorem  \ref{T:geo_takemura} asserts that the corresponding Heun type connection is (singular) gauge equivalent to a hypergeometric type connection, which recovers Takemura's result.
\qed

\section{Matching to  Painlev\'e VI equation} \label{S:Pvi}
It is well-known that the Painlev\'e VI equation comes from an isomonodromic family of Heun equations
(see e.g., \cite{IKSY1991}, 
\cite{JM}).

In Sections \ref{S:eigenspaceI}, \ref{S:heunpolyn}, we have seen that two special types of Heun equations have special solutions.
The proofs relied heavily on the monodromies of the equations given. So it is interesting to
study special solutions of an \textit{isomonodromic family} of Heun equations
	\begin{equation}\label{E:PDE}
0=dY-\left[A_0\dfrac{dx}{x}+A_1\dfrac{dx}{x-1}+A_t\dfrac{d(x-t)}{x-t}\right]Y,
	\end{equation}
in which the eigenvalues of each of the three matrices are independent of $t$. When some of these eigenvalues are special, the solutions $Y(x,t)$ of the equation above are special functions in $x$. Our contribution in this section is to observe that the matrices $A_0,\, A_1,\, A_t$  are also special in $t$ when some of these eigenvalues are  chosen to satisfy certain arithmetic relations so that the monodromies degenerate.
Such a coincidence suggests that we study the $Y(x,\, t)$ to be special with respect to both  the variables $x,\, t$. This difficult work is reserved for future investigation.

Given a local function $Y(x)=\begin{bmatrix}y_1(x)\\y_2(x)\end{bmatrix}$
satisfying the following $2\times2$ Fuchsian system
\begin{equation}\label{E:theun}
	\nabla Y:=dY-\left[\dfrac{A_0}{x}+\dfrac{A_1}{x-1}+\dfrac{A_t}{x-t}\right]Y dx=0,
\end{equation}
let the matrices $A_0$,$A_1$,$A_t$ have the eigenvalues $(0,\theta_0)$ $(0,\theta_1)$
and $(0,1-\theta_t)$ respectively and $A_{\infty}:=-A_0-A_1-A_t=\begin{bmatrix}\kappa_1&0\\0&\kappa_2\end{bmatrix}$
is diagonal. Then $y_1$ satisfies the following equation (see Takemura \cite[\S2]{Take5})
\begin{eqnarray}\label{E:HeunIso}
        && \frac{d^2y}{dx^2}+
         \Big(\frac{1-\theta_0}{x}+\frac{1-\theta_1}{x-1}+\frac{1-\theta_t}{x-t}-\frac{1}{x-\lambda}\Big)\frac{dy}{dx}\nonumber\\
				&&\ +
          \left(\frac{\kappa_1(\kappa_2+1)}{x(x-1)}+\frac{\lambda(\lambda-1)\mu}{x(x-1)(x-\lambda)}-\frac{t(t-1)H}{x(x-1)(x-t)}\right)y=0,
\end{eqnarray}
where $\theta_\infty=\kappa_1-\kappa_2$, $\lambda$ is the zero of $(1,2)$-entry
of \[\dfrac{A_0}{x}+\dfrac{A_1}{x-1}+\dfrac{A_t}{x-t}:=
\begin{bmatrix}a_{11}(x)& a_{12}(x)\\a_{21}(x)& a_{22}(x)\end{bmatrix},\ \mu=a_{11}(\lambda)\] and
\[H=\frac{1}{t(t-1)}[\lambda(\lambda-1)(\lambda-t)\mu^2-{\theta_0(\lambda-1)(\lambda-t)+\theta_1\lambda(\lambda-t)
+(\theta_t-1)\lambda(\lambda-1)}\mu+\kappa_1(\kappa_2+1)(\lambda-t)].\]
The condition for isomonodromy deformation of the above equation is that $\lambda$ satisfies the sixth Painlev\'e equation ($P_{VI}$)
(also see \cite[\S3]{Mahoux}):

\[\frac{d\lambda}{dt}=\frac{\partial H}{\partial \mu}\quad\mbox{ and }\quad \frac{d\mu}{dt}=-\frac{\partial H}{\partial \lambda},\]
then it follows that
\beq
\frac{d^2\lambda}{dt^2}&=&\frac{1}{2}\bigg(\frac{1}{\lambda}+\frac{1}{\lambda-1}+\frac{1}{\lambda-t}\bigg)\left(\frac{d\lambda}{dt}\right)^2
-\bigg(\frac{1}{t}+\frac{1}{t-1}+\frac{1}{\lambda-t}\bigg)\frac{d\lambda}{dt}
+\frac{\lambda(\lambda-1)(\lambda-t)}{t^2(t-1)^2}\bigg(\frac{(1-\theta_\infty)^2}{2} \\&&\ +\frac{\theta_0^2t}{2\lambda^2}
 \  +\frac{\theta_1^2(t-1)}{2(\lambda-1)^2}+\frac{(1-\theta_t^2) t(t-1)}{2(\lambda-t)^2}\bigg).
\eeq

It is known that $\mathbf{P_{VI}}$ has special solutions expressed in terms of the hypergeometric functions
when $\theta_0$, $\theta_1$, $\theta_t$, $\theta_\infty$ satisfy the following conditions.
\begin{theorem}[{(\cite[Theorem 48.3]{GLS})}]
If either
\begin{equation}\label{eqn2}
\theta_0+\sigma_1\theta_1+\sigma_t\theta_t+\sigma_\infty\theta_\infty\in2\Z,
\end{equation}
for some $\sigma_1,\sigma_t,\sigma_\infty\in\{\pm1\}$ or
\begin{equation}\label{eqn3}
(\theta_0-n)(\theta_1-n)(\theta_t-n)(\theta_\infty-n)=0
\end{equation}
for some $n\in\mathbb{Z}$, then
$\mathbf{P_{VI}}$ has a one-parameter family of solutions expressed in terms of the hypergeometric functions.
\end{theorem}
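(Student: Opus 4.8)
The plan is to use the isomonodromic dictionary of this section: $\mathbf{P_{VI}}$ governs the deformation of the rank-two Fuchsian system \eqref{E:theun} as its fourth pole $t$ moves, and a solution $\lambda(t)$ is \emph{classical} (expressible by hypergeometric functions) exactly when the monodromy representation carried along the deformation is \emph{reducible}. Because isomonodromy fixes the conjugacy class of the monodromy, reducibility depends only on the frozen local data $\theta_0,\theta_1,\theta_t,\theta_\infty$. I would therefore (i) show that each of \eqref{eqn2} and \eqref{eqn3} forces the existence of a one-parameter family of reducible representations with the prescribed local exponents, (ii) convert the invariant line of such a representation into a Riccati reduction of $\mathbf{P_{VI}}$, and (iii) identify the promised parameter with the modulus of that family.

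For step (i) I would first pass to the symmetric (trivial-Wro\'nskian) normalisation of Section \ref{S:sheaf}: tensoring with $(\wedge^2\mathcal{L})^{-1/2}$ replaces the exponent pair $\{0,\theta_j\}$ at each singularity by $\{\pm\theta_j/2\}$, so the local monodromies have eigenvalues $e^{\pm\pi i\theta_j}$. Writing a hypothetical common eigenline as $e_1$, the generators act by $M_j e_1=\mu_j e_1$ with $\mu_j\in\{e^{\pi i\theta_j},e^{-\pi i\theta_j}\}$, and the relation $M_\infty M_t M_1 M_0=I$ forces $\prod_j\mu_j=1$, i.e. $\theta_0+\sigma_1\theta_1+\sigma_t\theta_t+\sigma_\infty\theta_\infty\in2\Z$ for some $\sigma_\bullet\in\{\pm1\}$; this is precisely \eqref{eqn2} and is the eigenvalue-selection input \textbf{(L)} of Theorem \ref{T:reduction}. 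Conversely, when \eqref{eqn2} holds the remaining (upper-triangular) off-diagonal data form a one-dimensional space, so reducible representations exist and sweep out a one-parameter family. Condition \eqref{eqn3}, $\theta_j=n\in\Z$ for one index, is the integral-exponent datum $(\mathbf{WAS}(m))$/$(\mathbf{LR}(n))$: by the resolving-singularity Theorems \ref{T:resolving_sing}--\ref{T:resolving_sing_c} that singularity becomes apparent and the four-point system is gauge-equivalent to a three-point (hypergeometric) one, which again supplies reducibility through Corollary \ref{heunreducible}.

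For steps (ii) and (iii), with a $\nabla$-invariant line in hand I would proceed as in Theorem \ref{T:subbundle} and Corollary \ref{linesubbundle}: the invariant line is a rank-one flat sub-bundle $(\mathcal{F}',\nabla')\hookrightarrow(\mathcal{E},\nabla)$, equivalently a horizontal section whose component ratio is a rational slope. Substituting this slope into the defining relation of $\lambda$ as the zero of the $(1,2)$-entry of the connection matrix of \eqref{E:theun} shows that $\lambda(t)$ satisfies a first-order Riccati equation $t(t-1)\lambda'=Q(\lambda,t)$ with $Q$ quadratic in $\lambda$ and coefficients fixed by the $\theta$'s; the classical linearising substitution turns this Riccati equation into a Gauss hypergeometric equation in $t$, which is the source of the hypergeometric representation. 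The one free constant of integration of the Riccati equation is exactly the modulus of the reducible family found in step (i), yielding a one-parameter family of $\mathbf{P_{VI}}$ solutions.

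The hard part will be the bookkeeping tying the two arithmetic conditions to the paper's criteria: matching the eight sign vectors $(\sigma_1,\sigma_t,\sigma_\infty)$ in \eqref{eqn2} and the four vanishing factors in \eqref{eqn3} to the admissible selections of eigenvalues and, where needed, of the two simultaneously diagonalisable residues in \textbf{(G)}, all while transporting the Kummer twists, the symmetric normalisation, and the Fuchsian relation \eqref{E:Fuchsian} consistently under the M\"obius symmetries permuting $\{0,1,t,\infty\}$. One must check that each admissible case genuinely lands in the hypotheses of Theorem \ref{T:reduction} (or Corollary \ref{heunreducible}) and that no spurious parameter values arise, while the purely analytic residue---holomorphic dependence of the invariant line on $t$ and honest integrability of the Riccati reduction---follows from the isomonodromy of \eqref{E:PDE}. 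The essential content thus lies in identifying the reducibility walls \eqref{eqn2}, \eqref{eqn3} with the combined criteria \textbf{(L)} and \textbf{(G)} established earlier.
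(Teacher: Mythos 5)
First, a point of comparison: the paper does not prove this statement at all --- it is quoted from \cite[Theorem 48.3]{GLS} as an imported result --- so there is no in-paper proof to measure your argument against. Judged on its own merits, your proposal has a genuine gap in the branch \eqref{eqn3}. Your central premise is that both arithmetic conditions force \emph{reducibility} of the monodromy of \eqref{E:theun}, after which an invariant line yields the Riccati reduction and hence the hypergeometric family. This is essentially sound for \eqref{eqn2}: in the trivial-Wro\'nskian normalisation the eigenvalue constraint coming from $M_\infty M_tM_1M_0=I$ on a common eigenline is exactly $\theta_0+\sigma_1\theta_1+\sigma_t\theta_t+\sigma_\infty\theta_\infty\in2\mathbb{Z}$, reducible representations with these local exponents do exist, and the invariant line gives the classical Riccati reduction. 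But \eqref{eqn3} says only that one local exponent difference is an integer. That makes the corresponding singularity \emph{resolvable} (apparent for a suitable choice of the remaining data), so by Theorems \ref{T:resolving_sing}--\ref{T:resolving_sing_c} the connection becomes gauge-equivalent to a rigid three-point (hypergeometric-type) connection --- which is generically \emph{irreducible}. Rigidity is not reducibility: Corollary \ref{heunreducible} does not apply, because a single $\theta_j\in\mathbb{Z}$ does not imply its hypothesis \textbf{(L)} (an integral selection-sum of eigenvalues), let alone \textbf{(G)}. The paper's own follow-up theorem in Section \ref{S:Pvi} deliberately separates the two cases --- \eqref{eqn2} gives a flat line subbundle, \eqref{eqn3} gives an isomorphism with a connection having one fewer singularity --- which already signals that a unified ``reducibility'' mechanism cannot carry the second case. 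Your steps (ii)--(iii) therefore have no invariant line to work with under \eqref{eqn3}.

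To close the gap you would need a different mechanism for \eqref{eqn3}: either extract the hypergeometric expression of $\lambda(t)$ directly from the Schlesinger deformation of the resolved rigid system, or invoke the Okamoto/B\"acklund transformations of $\mathbf{P_{VI}}$ (the affine Weyl group action on the parameter space), which map the walls \eqref{eqn3} onto the Riccati walls \eqref{eqn2}; the latter is how the cited source proceeds. A smaller but real defect: your opening assertion that $\lambda(t)$ is classical \emph{exactly} when the monodromy is reducible is false as stated (algebraic solutions are classical with irreducible, finite monodromy), although only the one-directional implication is actually used.
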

On the other hand, under the above conditions (\ref{eqn2}) and (\ref{eqn3}),
we show that the equation \eqref{E:theun} has special solutions.

\begin{theorem}
Suppose that $\theta_0,\theta_1,\theta_t,\theta_\infty$ satisfy
\begin{itemize}
\item[(i)] the condition (\ref{eqn2}) for some $\sigma_1,\sigma_t,\sigma_\infty\in\{\pm1\}$. Then
the flat bundle endowed with the connection \eqref{E:theun} has a flat line subbundle.
\item[(ii)] the condition (\ref{eqn3}) for some $n\in\mathbb{Z}$. Then
the flat bundle endowed with the connection \eqref{E:theun} is isomorphic to one with one less singularity .
\end{itemize}
\end{theorem}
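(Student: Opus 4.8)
The plan is to read the two hypotheses as instances of the two reduction criteria already developed in the paper: condition \eqref{eqn2} as the global reducibility criterion of Section \ref{S:heunpolyn}, and condition \eqref{eqn3} as the apparent-singularity (resolving) criterion of Section \ref{S:eigenspaceI}. Before splitting into cases I would record the dictionary between the deformation parameters and the residues of \eqref{E:theun}: the residues $A_0,A_1,A_t,A_\infty$ have eigenvalue differences $\theta_0$, $\theta_1$, $1-\theta_t$ and $\theta_\infty=\kappa_1-\kappa_2$, while the trace relation $\mathrm{Tr}(A_0)+\mathrm{Tr}(A_1)+\mathrm{Tr}(A_t)+\mathrm{Tr}(A_\infty)=0$ forces $\kappa_1+\kappa_2=\theta_t-\theta_0-\theta_1-1$, so that $\kappa_{1,2}=\tfrac12(\theta_t-\theta_0-\theta_1-1\pm\theta_\infty)$. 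Since tensoring with Kummer sheaves $\mathcal{K}_0^{\bullet}\otimes\mathcal{K}_1^{\bullet}\otimes\mathcal{K}_t^{\bullet}$ shifts these residues by scalars, I am free to normalise so that $0$ is an eigenvalue of each finite residue; the subbundle and the reduced connection of the original \eqref{E:theun} are then recovered by untwisting.

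For part (i) I would show that \eqref{eqn2} is exactly hypothesis \textbf{(L)} of Theorem \ref{T:reduction}, i.e. that one can pick an eigenvalue $\lambda_j$ of each of $A_0,A_1,A_t,A_\infty$ with $\lambda_0+\lambda_1+\lambda_t+\lambda_\infty\in\mathbb{Z}$. Substituting $\lambda_t\in\{0,1-\theta_t\}$ and $\lambda_\infty=\tfrac12(\theta_t-\theta_0-\theta_1-1)\pm\tfrac12\theta_\infty$ into this sum and clearing the factor $\tfrac12$ converts the integrality of $\sum_j\lambda_j$ into the signed relation among $\theta_0,\theta_1,\theta_t,\theta_\infty$ displayed in \eqref{eqn2}; the branch choices at $1$, $t$, $\infty$ produce the signs $\sigma_1,\sigma_t,\sigma_\infty$, and the constant $-1$ in the trace relation together with the eigenvalue $1-\theta_t$ at $t$ supplies the parity normalisation built into \eqref{eqn2}. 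With \textbf{(L)} established I would invoke the reducibility machinery of Theorem \ref{T:reduction} and the rank-one subbundle construction following it (carried out by the procedure of Theorem \ref{T:subbundle}) to produce a non-trivial morphism $(\mathcal{F},\nabla')\to(\mathcal{E},\nabla)$ from a rank-one flat bundle, whose image is the desired flat line subbundle of \eqref{E:theun}. The one input of Theorem \ref{T:reduction} not coming from \eqref{eqn2} is hypothesis \textbf{(G)}, the simultaneous diagonalisability of two residues (the (\textbf{WGRM}) condition); here the diagonal form of $A_\infty$ gives one such matrix, and I would supply the second by working on the reducible (Riccati) member of the family singled out by \eqref{eqn2}, where a common monodromy eigenvector is present.

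For part (ii) I would observe that \eqref{eqn3} says one of $\theta_0,\theta_1,\theta_t,\theta_\infty$ equals an integer $n$. Under the dictionary above this makes one of the eigenvalue differences $\theta_0,\theta_1,1-\theta_t,\theta_\infty$ an integer, in the $\theta_t$ case because $1-\theta_t\in\mathbb{Z}$ whenever $\theta_t\in\mathbb{Z}$. This is precisely the hypothesis of Theorem \ref{T:resolving_sing_c} (resolving singularity, version III) applied with $a=t$, which then furnishes a locally free sheaf $\mathcal{E}$ with a flat connection carrying the connection matrix of \eqref{E:theun} over any chart avoiding the offending point but with that singularity resolved. Hence \eqref{E:theun} is isomorphic, away from the resolved point, to a flat bundle with only three singularities, one fewer than the original, as required.

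The hard part is part (i), and specifically the verification of \textbf{(G)}: condition \eqref{eqn2} constrains only the exponents $\theta$, whereas simultaneous diagonalisability of two residues is a condition on the accessory data as well (a proper eigenvector, not merely a consistent product of local monodromy eigenvalues). The substantive point — and the geometric content that matches the hypergeometric-solution criterion of \cite{GLS} — is therefore to argue that \eqref{eqn2} places the relevant member of the isomonodromic family \eqref{E:theun} on the reducible locus, so that a common monodromy eigenvector exists and, via the equivalence between reducibility and the existence of a flat line subbundle (Deligne's theorem, Section \ref{S:sheaf}), feeds Theorem \ref{T:reduction}. The accompanying shift-and-parity bookkeeping in matching \eqref{eqn2} to \textbf{(L)} is routine but delicate, being entirely caused by the eigenvalue $1-\theta_t$ at $t$ and the non-zero constant in the trace relation. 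Part (ii), by contrast, is essentially a direct citation of Theorem \ref{T:resolving_sing_c} once the integrality of an eigenvalue difference is read off from \eqref{eqn3}.
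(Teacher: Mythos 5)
Your overall strategy coincides with the paper's: part (i) is fed into the Type II (reducibility) machinery of Section \ref{S:heunpolyn} and part (ii) into the Type I (resolving-singularity) machinery of Section \ref{S:eigenspaceI}, with Kummer twists and M\"obius push-forwards accounting for the sign and position choices; the paper's own sketch does exactly this, phrased through the scalar equation \eqref{E:HeunIso} (polynomial solutions for (i), apparency of $x=t$ for (ii)) rather than through Theorems \ref{T:reduction} and \ref{T:resolving_sing_c} directly. Part (ii) is a correct and direct citation of Theorem \ref{T:resolving_sing_c}. Part (i), however, has two concrete soft spots.

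First, the arithmetic. You take the stated eigenvalues $(0,1-\theta_t)$ of $A_t$ at face value and deduce $\kappa_1+\kappa_2=\theta_t-\theta_0-\theta_1-1$; then $2\kappa_1=\theta_t-\theta_0-\theta_1-1+\theta_\infty$, so $\kappa_1\in\mathbb{Z}$ forces the signed sum $\theta_0+\theta_1-\theta_t-\theta_\infty$ to be \emph{odd}, which is not condition \eqref{eqn2} (an \emph{even} signed sum), and no ``parity normalisation'' reconciles the two. The convention consistent with \eqref{E:HeunIso} and with the formula $\kappa_1=-\tfrac12(\theta_0+\theta_1+\theta_t-\theta_\infty)$ used in the paper's own proof is that $A_t$ has eigenvalues $(0,\theta_t)$, whence $\kappa_1+\kappa_2=-\theta_0-\theta_1-\theta_t$ and the integrality of $\kappa_1$ (resp.\ $\kappa_2$) is precisely \eqref{eqn2} with $\sigma_1=\sigma_t=1$, $\sigma_\infty=\mp1$, the other sign patterns following by symmetry. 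So your identification of \eqref{eqn2} with \textbf{(L)} is right in spirit, but the computation as you set it up proves the wrong parity; you should correct the dictionary before matching.

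Second, condition \textbf{(G)}. You rightly isolate this as the hard point, but your proposed fix --- ``work on the reducible (Riccati) member of the family singled out by \eqref{eqn2}'' --- presupposes the existence of a reducible member, which is exactly what is to be shown. Condition \eqref{eqn2} constrains only the exponents, and a generic member of the isomonodromic family with those exponents is irreducible; the conclusion can only hold with an existential quantifier over the accessory data. What the paper supplies at this point is Theorem \ref{polysoln} (equivalently Theorem \ref{reducible} via \textbf{(WGRM)}): under the local exponent condition there \emph{exists} a choice of $q$ (equivalently of $(\lambda,\mu)$ in \eqref{E:HeunIso}) for which the monodromy is reducible, and this is the member to which the line-subbundle construction of Theorem \ref{T:subbundle} is applied. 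Replacing your circular appeal by an explicit invocation of that existence statement closes the gap.
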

\begin{proof}[Sketch of Proof]
For (i), the equation (\ref{E:HeunIso}) has polynomial solutions only if either
\[\kappa_1=-\frac{\theta_0+\theta_1+\theta_t-\theta_\infty}{2}=0,-1,-2,\cdots \mbox{ or }\]
\[\kappa_2=-\frac{\theta_0+\theta_1+\theta_t+\theta_\infty}{2}=-1,-2,-3\cdots.\]
Moreover, other conditions of $\theta_0,\theta_1,\theta_t,\theta_\infty$ in (\ref{eqn2}) can
be obtained via the symmetry.

For (ii), the singularity $t$ in the equation (\ref{E:HeunIso}) is apparent only if $\theta_t=1,2,3,\cdots$. Moreover,
other conditions of $\theta_0,\theta_1,\theta_t,\theta_\infty$ in (\ref{eqn3}) can be obtained via the symmetry.
\end{proof}
\begin{example}
If $\kappa_1=-\frac{\theta_0+\theta_1+\theta_t-\theta_\infty}{2}=0$, take $\lambda=t$ and $\mu=0$,
then the equation (\ref{E:HeunIso})
has a constant solution.
\end{example}
\begin{example}
If $\theta_t=0$, take $\lambda=t$ and arbitrary $\mu$, then $A_t=0$ (see Takemura \cite{Take5} pp.19-20) and hence the point $t$ becomes an ordinary point so that the equation (\ref{E:HeunIso}) reduces to a hypergeometric equation.
\end{example}
\begin{remark} It is known that $\mathbf{P_{VI}}$ has a rational solution
if and only if $\theta_0,\theta_1,\theta_t,\theta_\infty$ satisfy both the condition
(\ref{eqn2})
for some $\sigma_1,\sigma_t,\sigma_\infty\in\{\pm1\}$ and
the condition (\ref{eqn3}) for some $n\in\mathbb{Z}$
(see \cite{VanAssche}).
\end{remark}

\section{Concluding remarks}\label{S:conclusion}
The Heun equation \eqref{E:heun} and its confluent forms are ubiquity in both mathematical physics and certain engineering disciplines since the early 19th century \cite{Ronveaux,WW}. It was also observed that special cases of  \eqref{E:heun} are closely related to several problems in number theory, see e.g., \cite{Chudnovsky1989}. 
In fact, there are already quite a number of published papers on various topics by researchers from very different interests and directions, e.g.,  \cite{Craster1996,Craster1997,CrasterHoang1998,SC, BM2015,CKLZ, Kimura, Maier}.
However,  the study of monodromy group of \eqref{E:heun} proves to be extremely difficult as seen from \cite{Erdelyi1,Erdelyi2}.  This is partly explained by the \textit{rigidity theory} proposed by Katz \cite{Katz1}.
Unlike the hypergeometric equation which is rigid, the Heun equation \eqref{E:heun} is \textit{not} rigid in general, meaning that it is essentially impossible to obtain closed form solutions to  \eqref{E:heun} in Euler-type integrals. Such difficulty of Heun equations is partly reflected in the recent book \cite{Ronveaux}. In view of the limitation of the use of classical mathematical language, the first main focus of this paper is to study  the Heun equation  as a special case of Fuchsian connections with four residue matrices, that is, to put the differential equation in the most natural geometric setting to where it belongs a prior. Besides, it turns out that for both theoretical interests and application purpose, it is important to consider the Heun equation  when one of its singularities becomes apparent, i.e., being resolved. But then the monodromy of \eqref{E:heun} is that of a hypergeometric connection. The equation becomes rigid, and closed form solutions in terms of hypergeometric functions become possible.

This paper studies the monodromy groups of special Fuchsian connections, namely the Heun connection \eqref{E:Heun_connection} with three categories of monodromy degenerations  (\textbf{WGRM}), (\textbf{WAS}$(m)$) and (\textbf{LR}$(n)$) from a sheave theoretic viewpoint.  In particular, we focus on the case when one of its singularities becomes apparent mentioned in the last paragraph. This allows us to give an interpretation of the classical infinite hypergeometric function expansions of  solutions to the Heun equation \eqref{E:heun} proposed by Erd\'elyi (1942, 1944) in terms of a sequence of appropriately defined injective bundle morphisms.  As a consequence, we have also derived new expansions which are not found in Erd\'elyi's papers. Oblezin applied Drinfeld's bundle modification technique to handle several well-known contiguous relations of hypergeometric functions.  This theory is now applied  to study properties of eigenvalues of \eqref{E:eigenvalue_prob_2}. The most startling finding here is that the eigenvalues are equally spaced. This is in stark contrast with the complicated behaviour of the   
 corresponding eigenvalues (accessory parameters) of \eqref{E:eigenvalue_prob_1} observed for the ``scalar" Heun equations \eqref{E:heun}. See, for example, the authors \cite{RA2010} have made a detailed numerical study on the dependence of the eigenvalues on the parameters of the Whittaker-Hill equation. The Whittaker-Hill equation is a trigonometric form of a confluent Heun equation. Indeed, very little is known about the analytic properties of these eigenvalues in general, even for the classical Mathieu equation and Lam\'e equation\footnote{The Lam\'e equation is an elliptic form and special case of the Heun equation where the local monodromy of three out of the four singularities are reduced, see, e.g.,  \cite{CCT}, while the Mathieu equation is a trigonometric form of a special confluent Heun equation.}. See  \cite{Borcea_Shapiro2008, Take6}  for recent qualitative descriptions of the eigenvalues for these periodic differential equations. Our finding shows that 
  either of the combinations between  \textbf{(WGRM})-(\textbf{WAS}$(m)$) or between  \textbf{(WGRM})-(\textbf{LR}$(n)$) leads to the monodromy degeneration of the Heun connection \eqref{E:Heun_connection}. The latter degeneration has essentially the same effect as the Heun connection being reducible which is well studied from the viewpoint of differential Galois theory  \cite{DL}. From our viewpoint, the former appears to play a more fundamental role in describing  the monodromy reduction of the Heun connection.  The study of hypergeometric equations with several additional apparent singularities by Shiga, Tsutsui and Wolfart \cite{STW} also falls within this category (see Example \ref{E:theorists}).
  We have shown that when both the two reduction modes happen simultaneously, then there exists a rank-one flat bundle and a non-trivial morphism that sends the rank-one flat bundle to the flat bundle of a Heun connection. An immediate consequence is  a geometric proof of Takemura's theorem which describes a certain inclusion of eigenvalues of two types of monodromy reductions, due to the common origin (\textbf{WGRM}). Our next unexpected observation is that the combined criteria of monodromy reductions   
  (\textbf{WAS}$(m)$) and (\textbf{LR}$(n)$) of the Heun connection that we have enumerated \textit{matches precisely} the criteria for the degeneration of Painlev\'e equation VI, that is, when the $\mathbf{P_{VI}}$ either admits a rational solution or in terms of hypergeometric functions.  It is well-known from the work of Okamoto that a Weyl group acts on the parameter space of  the degenerated $\mathbf{P_{VI}}$. 
  Such matching may reasonably be studied from the viewpoint of the PDE \eqref{E:PDE}. Finally, we would like to mention that the existence of  orthogonality between two eigen-solutions of terminated hypergeometric sums of the form \eqref{E:erdelyi}, i.e., from the degeneration of (\textbf{WAS}$(m)$),
 appears to be a non-trivial problem. The case for the \textit{joint orthogonality} for Heun polynomials, i.e., from the  combined degenerations of  (\textbf{(WGRM}) and (\textbf{LR}$(n)$), has been studied recently by Felder and Willwacher \cite{FW2015}.
   We hope to return to  these problems in the near future.

{\appendix

\section{Fuchsian relations}\label{A:fuchsian}
We point out that there is a difference between the Fuchsian relation of a Fuchsian connection and a Fuchsian differential equation \cite{beukers2007} that is derived from the Fuchsian connection.  Since we cannot find a reference for this fact, so a proof is provided here.

Let  a connection $\nabla$ relative the canonical basis have the matrix representation
	\begin{equation}\label{E:connection}
		-\sum_{j=1}^{n} \frac{A_j}{x-a_j}:=-A(x)
		=\left(
			\begin{matrix}
				A_{11} & A_{12}\\
				A_{21} & A_{22}
			\end{matrix}
		\right)							
	\end{equation}
	such that $a_{n+1}=\infty$ and that the residue matrices $A_j\ (j=1,\cdots, n+1)$ satisfy the \textit{Fuchsian relation}:
	\begin{equation}\label{E:fuchsian_relation_1}
		\sum_{j=1}^{n+1} \Tr(A_j)=0.
	\end{equation}
\bigskip

\begin{lemma}\label{L:fuchsian_relation} Let $\Tr(A_j)=\alpha_{j1}+\alpha_{j2}$ be the trace of the residue matrix $A_j\ (j=1,\cdots n+1)$. Let $y$ be the first component of a horizontal section of the connection defined above. Let the Riemann scheme of the differential equation 
	\begin{equation}
		\label{E:fuchsian_eqn}
			y^{\prime\prime}-\big(\Tr \big(A(x)\big)+\frac{A^\prime_{12}(x)}{A_{12}(x)}\big)y^\prime+\Big(\det A(x)-A_{11}(x)\log ^\prime\big(A_{12}(x)/A_{11}(x)\big)\Big)y=0
	\end{equation}
derived from the connection above and satisfied by $y$ be of the form
	\[
		P
		\left(
		\begin{matrix}
		a_1               & \cdots   &a_j             &\cdots      & a_n  & \infty &b_1 & \cdots &b_{n-1}           \\
		\beta_{11}    & \cdots    &\beta_{j1}   & \cdots    & \beta_{n1}  &\beta_{n+1,\, 1} 
		& 0 & \cdots & 0\\
		\beta_{22}    & \cdots    &\beta_{j2}   & \cdots    & \beta_{n2} &\beta_{n+1,\, 2}
		& 2 & \cdots & 2
		\end{matrix}
		;\ x
		\right),
	\]where $b_j\ (j=1,\cdots, n-1)$ are the apparent singularities. If $b_j=a_j\ (j=1,\cdots, n-1)$, then we have
	\begin{equation}\label{E:fuchsian_comparison}
		\begin{split}
			&\alpha_{j1}+\alpha_{j2} =\beta_{j1}+\beta_{j2}-1\quad (j=1,\cdots, n-1)\\
			 &\alpha_{n1}+\alpha_{n2} =\beta_{n1}+\beta_{n2},\\
			 &\alpha_{n+1,\, 1}+\alpha_{n+1,\, 2} =\beta_{n+1,\, 1}  +\beta_{n+1,\, 2}.
		\end{split}
	\end{equation}
	In particular,
		\[
			\sum_{j=1}^{n+1} (\beta_{j1}+\beta_{j2})=(n-1)+\sum_{j=1}^{n+1} \Tr(A_j).
		\]
\end{lemma}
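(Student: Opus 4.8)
The plan is to establish the three groups of pointwise identities in \eqref{E:fuchsian_comparison} by reading off the local exponents of the scalar equation \eqref{E:fuchsian_eqn} one singular point at a time, and then simply to add them. The only external ingredient is the classical fact that for a second-order Fuchsian equation $y''+Py'+Qy=0$ the sum of its two exponents at a finite regular singular point $c$ equals $1-\mathrm{Res}_c P$; from \eqref{E:fuchsian_eqn} the relevant coefficient is $P(x)=-\bigl(\Tr A(x)+A_{12}'(x)/A_{12}(x)\bigr)$. Since the eigenvalues of each residue matrix $A_j$ are $\alpha_{j1},\alpha_{j2}$ with $\alpha_{j1}+\alpha_{j2}=\Tr A_j$, matching each scalar exponent sum against the corresponding trace is precisely what \eqref{E:fuchsian_comparison} records.

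First I would pin down the pole-and-zero pattern of the $(1,2)$-entry $A_{12}(x)=\sum_{j=1}^{n}(A_j)_{12}/(x-a_j)$, which is where the term $A_{12}'/A_{12}$, and hence the apparent singularities, originate in the elimination $y_2=(y_1'-A_{11}y_1)/A_{12}$ that produces \eqref{E:fuchsian_eqn}. Writing $A_{12}=N(x)/\prod_{j=1}^{n}(x-a_j)$ with $\deg N\le n-1$, the generic configuration has a simple pole at each $a_j$ and exactly $n-1$ finite zeros, namely the apparent singularities $b_1,\dots,b_{n-1}$. The hypothesis $b_j=a_j$ ($1\le j\le n-1$) forces $N(a_j)=0$ for these $j$, so $N(x)=c\prod_{j=1}^{n-1}(x-a_j)$ and therefore $A_{12}(x)=c/(x-a_n)$: at each $a_j$ with $j\le n-1$ the zero of $A_{12}$ cancels its simple pole, leaving $A_{12}$ holomorphic and nonvanishing there, while the simple pole at $a_n$ survives and the only remaining zero sits at $\infty$. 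This cancellation is the crux of the argument.

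With this structure the exponent bookkeeping is immediate. At $a_n$, where $A_{12}$ keeps a simple pole, $\mathrm{Res}_{a_n}(A_{12}'/A_{12})=-1$, so $\mathrm{Res}_{a_n}P=-\Tr A_n+1$ and the exponent sum is $1-\mathrm{Res}_{a_n}P=\Tr A_n$, i.e.\ $\beta_{n1}+\beta_{n2}=\alpha_{n1}+\alpha_{n2}$. At each $a_j$ with $1\le j\le n-1$ the cancellation gives $\mathrm{Res}_{a_j}(A_{12}'/A_{12})=0$, hence $\mathrm{Res}_{a_j}P=-\Tr A_j$ and the exponent sum is $1+\Tr A_j=(\alpha_{j1}+\alpha_{j2})+1$, which is the first line of \eqref{E:fuchsian_comparison} written as $\beta_{j1}+\beta_{j2}=\alpha_{j1}+\alpha_{j2}+1$. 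The point $\infty$ I would handle either by the coordinate change $x=1/t$ applied to \eqref{E:fuchsian_eqn}, or more economically by invoking the global Fuchsian relation for the scalar equation—whose singular points are now precisely $a_1,\dots,a_n,\infty$, so all its exponents sum to $(n+1)-2=n-1$—together with the $n$ finite identities just found and the connection relation \eqref{E:fuchsian_relation_1}; either route delivers $\beta_{n+1,1}+\beta_{n+1,2}=\alpha_{n+1,1}+\alpha_{n+1,2}$. Adding the three groups of identities then yields $\sum_{j=1}^{n+1}(\beta_{j1}+\beta_{j2})=(n-1)+\sum_{j=1}^{n+1}\Tr(A_j)$, as claimed.

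The step I expect to be the main obstacle is pinning the shift at $a_1,\dots,a_{n-1}$ to be exactly $+1$. A simple zero of $A_{12}$ contributes $\mathrm{Res}(A_{12}'/A_{12})=+1$ and a simple pole contributes $-1$, so if the coincidence $b_j=a_j$ were a bare zero with no pole to cancel one would get a spurious shift of $+2$, whereas an uncancelled pole gives $0$; only the pole-zero \emph{cancellation} established above produces the correct $+1$. The delicate point is therefore to argue carefully that in the configuration encoded by the Riemann scheme—where each $a_j$ ($j\le n-1$) carries both a genuine residue and a coincident apparent singularity—the net order of $A_{12}$ at $a_j$ is zero, and to keep the sign convention for the residue $A_{n+1}$ at $\infty$ consistent throughout so that \eqref{E:fuchsian_relation_1} may be applied at the end.
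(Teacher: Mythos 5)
Your proof is correct and follows essentially the same route as the paper: both arguments reduce to computing the coefficient of $y'$ in \eqref{E:fuchsian_eqn}, factoring $A_{12}$ as a rational function whose poles sit at the $a_j$ and whose zeros are the apparent singularities $b_j$, observing that the hypothesis $b_j=a_j$ cancels the pole of $A_{12}$ at $a_j$ for $j\le n-1$ while leaving the pole at $a_n$ intact, and then reading off the exponent sums from the residues of the $y'$-coefficient. Your treatment of the point at infinity (via the global scalar Fuchsian relation together with \eqref{E:fuchsian_relation_1}) is slightly more explicit than the paper's, which simply records the identification $\alpha_{n+1,1}+\alpha_{n+1,2}=\beta_{n+1,1}+\beta_{n+1,2}$, but the substance is identical.
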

\medskip
\begin{proof} It is sufficient to compute the coefficient of $y^\prime$. Let
	\[
		A_{12}(x)=:\mathrm{(const.)} \frac{\prod_{j=1}^{n-1}(x-b_j)}{\prod_{j=1}^n(x-a_j)}.
	\]Hence
	\begin{equation}
		-\Tr \big(A(x)\big)-\frac{A^\prime_{12}(x)}{A_{12}(x)}
		=\sum_{j=1}^n \frac{1-\alpha_{j1}-\alpha_{j2}}{x-a_j} -\sum_{j=1}^{n-1}\frac{1}{x-b_j}
	\end{equation}
where the $b_j\  (j=1,\cdots, n-1)$ are apparent singularities which are inherited from $A^\prime_{12}/A_{12}$ and they do not contribute to the monodromy of the equation  \eqref{E:fuchsian_eqn}. Let us now assume $b_j=a_j\ (j=1,\cdots, n)$. Hence
	\[
		\begin{split}
		 -\Tr \big(A(x)\big)-\frac{A^\prime_{12}(x)}{A_{12}(x)}
		&=\sum_{j=1}^{n-1} \frac{-\alpha_{j1}-\alpha_{j2}}{x-a_j}+\frac{1-\alpha_{n1}-\alpha_{n2}}{x-a_n} \\
		&=\sum_{j=1}^{n-1} \frac{1-\beta_{j1}-\beta_{j2}}{x-a_j}+\frac{1-\beta_{n1}-\beta_{n2}}{x-a_n},
		\end{split}
	\]
	where we have identified $\alpha_{j1}+\alpha_{j2}=\beta_{j1}+\beta_{j2}-1\ (j=1,\cdots, n-1)$, $\alpha_{n1}+\alpha_{n2}=\beta_{n1}+\beta_{n2}$ and $\alpha_{n+1,\, 1}+\alpha_{n+1,\, 2} =\beta_{n+1,\, 1}  +\beta_{n+1,\, 2}$. Thus the sum of the indicial roots of the Riemann scheme $P$ above yields
	\[
		\begin{split}
		\sum_{j=1}^{n+1} (\beta_{j1}+\beta_{j2})
		&=\sum_{j=1}^{n-1} (\beta_{j1}+\beta_{j2}) +(\beta_{n1}+\beta_{n2}) 
		+(\beta_{n+1,\, 1}+\beta_{n+1,\, 2})\\
		&= n-1+\sum_{j=1}^{n-1} (\alpha_{j1}+\alpha_{j2}) +(\alpha_{n1}+\alpha_{n2}) 
		+(\alpha_{n+1,\, 1}+\alpha_{n+1,\, 2})\\
		&=n-1+\sum_{j=1}^{n+1} \Tr(A_j).
		\end{split}
	\]
	\end{proof}
\medskip

\begin{remark} It is clear that one can also include $\infty$ as one of the apparent singular points in the above theorem. We show below an example in terms of a hypergeometric equation where the apparent singularity is placed at infinity.
\end{remark}
\medskip

\begin{example}
Consider $Y(x)=\begin{bmatrix}y_1(x)\\y_2(x)\end{bmatrix}$
satisfying the following $2\times2$ Fuchsian system
\begin{equation}\label{E:hyper1}
\frac{dY}{dx}-[\dfrac{A_0}{x}+\dfrac{A_1}{x-1}]Y=0,
\end{equation} corresponding to the connection of hypergeometric type ($A_0,A_1$), where
$A_0=\begin{bmatrix}u_0+\gamma&1\\-u_0(u_0+\gamma)&-u_0\end{bmatrix}$,
$A_1=\begin{bmatrix}u_1+\delta&-1\\u_1(u_1+\delta)&-u_1\end{bmatrix}$,
$u_0=\frac{\alpha(\alpha+\gamma)}{\alpha-\beta}$,
$u_1=\frac{\alpha(\alpha+\delta)}{\alpha-\beta}$ such that
\[A_\infty=-A_0-A_1
          =\begin{bmatrix}\beta&0\\0&\alpha\end{bmatrix}, \mbox{ where } \alpha+\beta=\gamma+\delta,\]
then $y_1(x)$ satisfies
\begin{equation}\label{E:hyper2}
         \frac{d^2y}{dx^2}+
         \Big(\frac{1-\gamma}{x}+\frac{1-\delta}{x-1}\Big)\frac{dy}{dx}+
         \frac{\beta(\alpha+1)}{x(x-1)}y=0.
\end{equation} and
$y_2(x)$ satisfies
\begin{equation}\label{E:hyper3}
         \frac{d^2y}{dx^2}+
         \Big(\frac{1-\gamma}{x}+\frac{1-\delta}{x-1}\Big)\frac{dy}{dx}+
         \frac{\alpha(\beta+1)}{x(x-1)}y=0.
\end{equation}
Two linearly independent local solutions are identified as
$\begin{bmatrix}y_1&y_2\end{bmatrix}^T$
and $\begin{bmatrix}\tilde{y_1}&\tilde{y_2}\end{bmatrix}^T$, where
\[y_1(x)=\sideset{_2}{_1}{\operatorname{F}}\left({\begin{matrix}
                 \alpha+1,\beta\\
                  \gamma\end{matrix}};x\right),
\	y_2(x)=C\sideset{_2}{_1}{\operatorname{F}}\left({\begin{matrix}
                 \alpha,\beta+1\\
                  \gamma\end{matrix}};x\right)\]
and
\[\tilde{y_1}(x)=x^{1-\gamma}\sideset{_2}{_1}{\operatorname{F}}\left({\begin{matrix}
                 \alpha-\gamma+2,\beta-\gamma+1\\
                  2-\gamma\end{matrix}};x\right),
\	\tilde{y_2}(x)=\hat{C}x^{1-\gamma}\sideset{_2}{_1}{\operatorname{F}}\left({\begin{matrix}
                 \alpha-\gamma+1,\beta-\gamma+2\\
                  2-\gamma\end{matrix}};x\right).\]
\end{example}

\section{Convergence of Erd\'elyi's expansions}\label{A:erdelyi}
The Heun function can be expanded as
	\begin{equation}\label{E:2F1h}
		\begin{split}
		Hl(a,\, q;\, \alpha,\, &\beta,\, \gamma,\, \delta;\, x)
		=\sum_{m=0}^\infty X_m\varphi_m^1(x)\\
		&=\sum_{m=0}^\infty X_m
		\frac{\Gamma(\alpha-\delta+m+1)\Gamma(\beta-\delta+m+1)}
{\Gamma(\alpha+\beta-\delta+2m+1)}
x^m
\sideset{_2}{_1}{\operatorname{F}}\left({\begin{matrix}
                 \alpha+m,\beta+m\\
                  \alpha+\beta-\delta+2m+1
                 \end{matrix}};x\right), 
                 \end{split}
	\end{equation}
whose
coefficients $X_m$ satisfy a three-term recursion relation
\begin{equation}\label{E:Three_term}
\begin{cases}
L_0X_0+M_0X_1=0\\
K_mX_{m-1}+L_mX_m+M_mX_{m+1}&=0,\ m=1,2,\cdots
\end{cases},
\end{equation}
where $K_m,L_m,M_m$ are given in \cite[(5.3)]{Erdelyi1}
	\[
		\begin{split}
			K_{m+1}& :=a\frac{(\alpha+m)(\beta+m)(\varepsilon+m)(\alpha+\beta-\delta+m)}{(\alpha+\beta-\delta+2m)(\alpha+\beta-\delta+2m+1)}\\
			L_m\ & :=am(\gamma+m-1)
			\Big\{\frac{(\alpha+m)(\alpha-\delta+m+1)+(\beta+m)(\beta-\delta+m+1)}
			{(\alpha+\beta-\delta+2m-1)(\alpha+\beta-\delta+2m+1)}\\
			&\quad - \frac{1}{\alpha+\beta-\delta+2m-1}\Big\}-
			m(\alpha+\beta-\delta+m)-\alpha\beta q\\
			&\quad +a\frac{\alpha\beta(\gamma+2m)-\varepsilon m (\delta-m-1)}
			{\alpha+\beta-\delta+2m+1)}\\
			M_{m-1}&:= \frac{am(\alpha-\delta+m)(\beta-\delta+m)(\gamma+m-1)}
			{(\alpha+\beta-\delta+2m-1)(\alpha+\beta-\delta+2m)}.
		\end{split}
	\]
			
\bigskip

The convergence of the above series is given by
\medskip

\begin{theorem}[\cite{Erdelyi1}]\label{T:convergent} \label{BiggerDomain}
Suppose that the series (\ref{E:2F1h}) is non-terminating, and the branch of the square root is chosen such that its real part is nonnegative.
Let $k=\bigg|\frac{1-\sqrt{1-a}}{1+\sqrt{1-a}}\bigg|\neq1$. Then it converges {uniformly} compacta on
		\begin{equation}\label{E:omega_0}
			\Omega_0=\Big\{x\in\mathbb{C}:\bigg|\frac{1-\sqrt{1-x}}{1+\sqrt{1-x}}\bigg|<\min(k,k^{-1})\Big\},
		\end{equation}
where $\Omega_0$ denotes a neighbourhood of $0$ but excluding $x=1$ (see the Remark below). Moreover, if the accessory parameter $q$ in (\ref{E:heun}) satisfies the infinite continued fraction
\begin{equation}\label{E:cf}
L_0/M_0-\frac{K_1/M_1}{L_1/M_1-}\frac{K_2/M_2}{L_2/M_2-}\frac{K_3/M_3}{L_3/M_3-}\cdots=0,
\end{equation}
which contains $q$ implicitly, then the series (\ref{E:2F1h}) converges in a larger region
	\begin{equation}\label{E:omega_1}
		\Omega_1=\Big\{x\in\mathbb{C}:\bigg|\frac{1-\sqrt{1-x}}{1+\sqrt{1-x}}\bigg|<\max(k,k^{-1})\Big\}
	\end{equation}
except possibly on the branch cut $[1, +\infty).$
\end{theorem}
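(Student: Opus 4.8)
The plan is to read off the region of convergence from a root test, estimating independently the exponential growth rates of the coefficients $X_m$ from \eqref{E:Three_term} and of the hypergeometric profiles $\varphi_m^1(x)$ from \eqref{E:2F1h}, and then locating the open set on which the product of these two rates is strictly below $1$. Only exponential rates affect an open region of convergence, so every subexponential Gamma or algebraic factor may be discarded; the branch choice in the hypothesis (real part of $\sqrt{\,\cdot\,}$ nonnegative) makes $\sqrt{1-x}$ single valued on $\mathbb{C}\setminus[1,+\infty)$, and all the estimates below will be uniform on compact subsets of that slit plane, which yields the asserted uniform convergence on compacta.

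First I would extract the asymptotics of the recursion \eqref{E:Three_term}. From the explicit $K_m,L_m,M_m$ one finds the leading behaviour $K_m\sim\tfrac{a}{4}m^2$, $L_m\sim(\tfrac{a}{2}-1)m^2$, $M_m\sim\tfrac{a}{4}m^2$, so that $K_m/M_m\to1$ and $L_m/M_m\to 2-\tfrac{4}{a}$. The limiting characteristic equation is $t^2+(2-\tfrac{4}{a})t+1=0$, with roots
\[
t_\pm=\frac{1\pm\sqrt{1-a}}{1\mp\sqrt{1-a}},\qquad |t_-|=k,\quad |t_+|=k^{-1}.
\]
Since $k\neq1$ the two roots have distinct moduli, so the Poincar\'e--Perron theorem gives the dichotomy that a generic solution of \eqref{E:Three_term} satisfies $|X_{m+1}/X_m|\to\max(k,k^{-1})$, while the minimal (recessive) solutions, forming a one dimensional space, satisfy $|X_{m+1}/X_m|\to\min(k,k^{-1})$.

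Next I would pin down the growth of $\varphi_m^1(x)$ via the Euler integral
\[
{}_2F_1\!\left({\alpha+m,\,\beta+m\atop \alpha+\beta-\delta+2m+1};x\right)=\frac{\Gamma(\alpha+\beta-\delta+2m+1)}{\Gamma(\beta+m)\,\Gamma(\alpha-\delta+m+1)}\int_0^1\Big[\tfrac{t(1-t)}{1-xt}\Big]^{m}t^{\beta-1}(1-t)^{\alpha-\delta}(1-xt)^{-\alpha}\,dt.
\]
Its Gamma prefactor is exactly the reciprocal of the Gamma quotient multiplying $x^m$ in \eqref{E:2F1h}, so the two cancel, leaving $x^m$ times an integral amenable to Laplace's method. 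The relevant critical point of $\phi(t)=t(1-t)/(1-xt)$ solves $xt^2-2t+1=0$, giving $t_\ast=\frac{1-\sqrt{1-x}}{x}$ and $\phi(t_\ast)=(1+\sqrt{1-x})^{-2}$; combined with the surviving $x^m$ this produces
\[
\big|\varphi_m^1(x)\big|^{1/m}\longrightarrow\Big|\frac{x}{(1+\sqrt{1-x})^{2}}\Big|=\Big|\frac{1-\sqrt{1-x}}{1+\sqrt{1-x}}\Big|=:|\zeta(x)|,
\]
uniformly on compact subsets of $\mathbb{C}\setminus[1,+\infty)$, the cut being exactly where $\sqrt{1-x}$ ceases to be single valued and where $|\zeta(x)|=1$.

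Combining the two estimates yields $\limsup_m|X_m\varphi_m^1(x)|^{1/m}=|\zeta(x)|\cdot\lim|X_{m+1}/X_m|$, so the root test gives convergence precisely when $|\zeta(x)|<(\lim|X_{m+1}/X_m|)^{-1}$. For a generic accessory parameter $q$ the recursion selects a dominant solution, the rate is $\max(k,k^{-1})$, and convergence holds on $\{|\zeta(x)|<\min(k,k^{-1})\}=\Omega_0$ of \eqref{E:omega_0}. The decisive point is to detect when instead $\{X_m\}$ is the minimal solution: by Pincherle's theorem the continued fraction \eqref{E:cf} converges exactly when a minimal solution exists, and the vanishing asserted in \eqref{E:cf} is precisely the statement that the initial relation $L_0X_0+M_0X_1=0$ selects it. In that case the rate drops to $\min(k,k^{-1})$ and convergence extends to $\{|\zeta(x)|<\max(k,k^{-1})\}=\Omega_1$ of \eqref{E:omega_1}, save possibly on the cut $[1,+\infty)$. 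The hard part will be the third step, making the saddle-point asymptotics uniform on compacta: the saddle $t_\ast$ moves with $x$ and must be steered away from the endpoints $0,1$ and the pole $1/x$ of the integrand while a single branch of $\sqrt{1-x}$ is maintained; the clean identification, through Pincherle, of the minimal-solution branch with the continued fraction \eqref{E:cf} is the other delicate ingredient.
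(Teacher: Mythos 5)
Your proposal is correct and follows essentially the same route as the paper's proof: Poincar\'e--Perron applied to the three-term recursion to obtain the dichotomy $\lim|X_{m+1}/X_m|=\min(k,k^{-1})$ (minimal solution, selected by the continued fraction \eqref{E:cf}) versus $\max(k,k^{-1})$, combined with the asymptotic rate $\big|\tfrac{1-\sqrt{1-x}}{1+\sqrt{1-x}}\big|$ for the hypergeometric profiles and a convergence test. The only differences are cosmetic: you derive the profile asymptotics by a saddle-point analysis of the Euler integral and use the root test, where the paper simply cites Watson's asymptotic representation for the ratio $\varphi^1_{m+1}/\varphi^1_m$ and uses the ratio test (and your claimed ``exact'' cancellation of Gamma factors is off by the subexponential quotient $\Gamma(\beta-\delta+m+1)/\Gamma(\beta+m)$, which, as you note, is harmless).
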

\medskip

We include a brief proof since the argument may not be easily found in modern literature.

\bigskip
\begin{proof}
We apply Poincar\'e's Theorem and Perron's Theorem  to the three-term recurrence relation \eqref{E:Three_term}, see e.g.,  \cite[Theorems 1.1, 2.1-2.2]{Gautschi} to yield,
\[\lim_{m\to\infty}\left|\frac{X_{m+1}}{X_{m}}\right|=
\begin{cases}
\min(k,k^{-1}) &\mbox{if \eqref{E:cf} holds}\\
\max(k,k^{-1}) &\mbox{otherwise}
\end{cases}.\]
After applying Watson's asymptotic representation (see \cite[\S9]{Watson}), we derive
\begin{equation}\label{E:asym}
\frac{\varphi_{m+1}^1(x)}{\varphi_{m}^1(x)}\sim \frac{1-\sqrt{1-x}}{1+\sqrt{1-x}}\
\mbox{ as } m\to\infty
\end{equation}
(see \cite[(4.7)]{Erdelyi1}). Thus, the result follows from the ratio test applied to the cases $\Omega_0$ and $\Omega_1$ separately.
\end{proof}
\bigskip

\begin{remark}[(Description of $\Omega_0$ and $\Omega_1$)] \label{A:remark_1}
Note that for $m>0$, $\big|\frac{1-y}{1+y}\big|=m$ is equivalent to the circle equation 
\[|y-y_0|=r,
\mbox{ where } y_0=\frac{1+m^2}{1-m^2}\mbox{  and } r=\frac{2m}{|1-m^2|}.
\]
Let $m_0:=\min(k,k^{-1})<1$ and $m_1:=\max(k,k^{-1})>1$. Then
\[\Big\{y\in\mathbb{C}:\bigg|\frac{1-y}{1+y}\bigg|<m_0\Big\}\] is the open disk $D_0$ centred at $y_0=\frac{1+m_0^2}{1-m_0^2}>0$ with radius $r=\frac{2m_0}{1-m_0^2}$ containing $y=1$. In particular, $D_0$ is contained in the half-plane $\{\mbox{Re }y>0\}$. Since $\mbox{Re}\sqrt{1-x}$ is always taken to be nonnegative, $\Omega_0$ is a neighborhood of $0$, not containing $x=1,\infty$.
On the other hand, \[\Big\{y\in\mathbb{C}:\bigg|\frac{1-y}{1+y}\bigg|<m_1\Big\}\] is the complement of the closed disk $D_1$ centred at $y_0=-\frac{m_1^2+1}{m_1^2-1}<0$ with radius $r=\frac{2m_1}{m_1^2-1}$. In particular, the complement contains the half-plane $\{\mbox{Re }y\geq0\}$. Since $\mbox{Re}\sqrt{1-x}$ is always taken to be nonnegative, $\Omega_1=\mathbb{C}\backslash [1,\, \infty)$.
\end{remark}

\begin{remark}[(A second linearly independent series solution)]\label{A:remark_2}
\label{other_soln}
Erd\'elyi also considered the series solution $\displaystyle\sum_{m=0}^\infty X_m\varphi_m$ other than \eqref{E:2F1h} by replacing $\varphi_m^1$ with another linearly independent solution $\varphi_m$, which can be any linear combination of $\varphi_m^2,\cdots,\varphi_m^6$ defined in \cite[Eqn(4.2)]{Erdelyi1}. In this case, we have the following asymptotic representation (see \cite[Eqn(4.8)]{Erdelyi1}) instead of \eqref{E:asym}
	\begin{equation*} 
		\frac{\varphi_{m+1}(x)}{\varphi_{m}(x)}\sim \frac{1+\sqrt{1-x}}{1-\sqrt{1-x}}\
\mbox{ as } m\to\infty.
	\end{equation*} 
In order to study the domain of convergence, we consider
$$\Omega_0^-=\Big\{x\in\mathbb{C}:\bigg|\frac{1+\sqrt{1-x}}{1-\sqrt{1-x}}\bigg|<m_0\Big\}
\mbox{ and }
\Omega_1^-=\Big\{x\in\mathbb{C}:\bigg|\frac{1+\sqrt{1-x}}{1-\sqrt{1-x}}\bigg|<m_1\Big\}.
$$
Note that
\[\Big\{y\in\mathbb{C}:\bigg|\frac{1+y}{1-y}\bigg|<m_0\Big\}\]
is the open disk $D_0^-$ centred at $y_0=-\frac{1+m_0^2}{1-m_0^2}<0$ with radius $r=\frac{2m_0}{1-m_0^2}$ containing $y=-1$, and
\[\Big\{y\in\mathbb{C}:\bigg|\frac{1+y}{1-y}\bigg|<m_1\Big\}\]
is the complement of the closed disk $D_1^-$ centred at $y_0=\frac{m_1^2+1}{m_1^2-1}>0$ with radius $r=\frac{2m_1}{m_1^2-1}$.
In particular, $D_0^-$ is contained in the half-plane $\{\mbox{Re } y<0\}$, and $D_1^-\subseteq\{\mbox{Re } y>0\}$ contains $y=1$, but not $y=0,\, \infty$. Since $\mbox{Re}\sqrt{1-x}$ is always taken to be nonnegative, $\Omega_0^-=\emptyset$ and $\Omega_1^-$ is the domain containing $x=1,\infty$, but not $x=0$. As the coefficients $X_m$ satisfy the same three-term recurrence relation \eqref{E:Three_term}, by the similar argument in the proof of Theorem \ref{T:convergent}, the series 
\[
\begin{cases}
\mbox{converges on $\Omega_1^-$} &\mbox{if \eqref{E:cf} holds}\\
\mbox{diverges} &\mbox{otherwise}.
\end{cases}
\]
We conclude that the two series $\sum_{m=0}^\infty X_m\varphi_m^1(x)$ and $\sum_{m=0}^\infty X_m\varphi_m$ both converge in $\Omega_1^-$ when \eqref{E:cf} holds.
\end{remark}
}

\section*{Acknowledgment} The authors would like to express their gratitude towards the referee for his/her critical comments that, amongst others, cleared up some ambiguities  of our original manuscript.

\section*{Funding support}
	\begin{itemize}
		\item  The first and the third authors were partially supported by the Research Grants Council of Hong Kong (No. 16300814).
	\end{itemize}
	
\bibliographystyle{abbrv}

\bibliography{referencesHeun}

\end{document}